
\documentclass[journal]{IEEEtran}
%

\usepackage[pagebackref]{hyperref}
\usepackage{xcolor}
\usepackage{booktabs}
\usepackage[sort,compress,numbers]{natbib}
\usepackage{makecell}
\usepackage{subcaption}
\usepackage{enumitem}

\hypersetup{
    colorlinks,
    linkcolor={red!50!black},
    citecolor={blue!50!black},
    urlcolor={blue!80!black}
}

\usepackage{cuted}

\usepackage{xspace}
\usepackage{bbm}
\usepackage{bm}
\input{mathlig}
\usepackage{mathtools}
\usepackage{relsize}
\usepackage{mathrsfs}
\usepackage{dsfont}
\DeclarePairedDelimiterX{\inp}[2]{\langle}{\rangle}{#1, #2}
\makeatletter
\newcommand*\bigcdot{\mathpalette\bigcdot@{.5}}
\newcommand*\bigcdot@[2]{\mathbin{\vcenter{\hbox{\scalebox{#2}{$\m@th#1\bullet$}}}}}
\makeatother

\newcommand{\muspace}{\mspace{1mu}}

\DeclareRobustCommand{\scond}{\mathchoice{\muspace\vert\muspace}{\vert}{\vert}{\vert}}
\mathlig{|}{\scond}

\DeclareRobustCommand{\discint}{\mathchoice{\mspace{-1.5mu}:\mspace{-1.5mu}}{\mspace{-1.5mu}:\mspace{-1.5mu}}{:}{:}}
\mathlig{::}{\discint}
\newcommand{\suchthat}{\mathchoice{\colon}{\colon}{:\mspace{1mu}}{:}}

\newcommand{\Cc}{\mathcal{C}}

\newcommand{\Mc}{\mathcal{M}}

\newcommand{\Xc}{\mathcal{X}}

\newcommand{\xv}{{\bf x}}

\newcommand{\cb}{{\mathbf c}}

\newcommand{\eb}{{\mathbf e}}

\newcommand{\xb}{{\mathbf x}}

\newcommand{\rhob}{\bm{\rho}}

\def\a{\alpha}
\def\b{\beta}

\def\d{\delta}

\def\th{\theta}
\def\Th{\Theta}

\DeclareMathOperator\E{\mathsf{E}}
\let\P\relax
\DeclareMathOperator\P{\mathsf{P}}

\newcommand\eg{e.g.,\xspace}
\newcommand\ie{i.e.,\xspace}
\def\textiid{i.i.d.\@\xspace}
\newcommand\iid{\ifmmode\text{ i.i.d. } \else \textiid \fi}

\newcommand{\Real}{\mathbb{R}}
\newcommand{\Natural}{\mathbb{N}}

\newcommand{\ones}{\mathds{1}}

\newcommand{\half}{\frac{1}{2}}%

\def\mathllap{\mathpalette\mathllapinternal}
\def\mathllapinternal#1#2{%
  \llap{$\mathsurround=0pt#1{#2}$}}

\def\clap#1{\hbox to 0pt{\hss#1\hss}}
\def\mathclap{\mathpalette\mathclapinternal}
\def\mathclapinternal#1#2{%
  \clap{$\mathsurround=0pt#1{#2}$}}

\let\oldstackrel\stackrel
\renewcommand{\stackrel}[2]{\oldstackrel{\mathclap{#1}}{#2}}

\DeclarePairedDelimiterX{\infdivx}[2]{(}{)}{%
  #1\;\delimsize\|\;#2%
}

\renewcommand{\hbar}{h\mathllap{\overline{\vphantom{h}\hphantom{\rule{4.6pt}{0pt}}}\mspace{0.77mu}}}

\catcode`~=11 %
\newcommand{\urltilde}{\kern -.06em\lower -.06em\hbox{~}\kern .02em}
\catcode`~=13 %

\hyphenation{Gauss-ian}
\hyphenation{qua-dra-tic}
\hyphenation{Vis-wa-nath}
\hyphenation{non-trivial}
\hyphenation{multi-letter}
\hyphenation{Gauss-ian}
\hyphenation{super-posi-tion}
\hyphenation{de-cod-er}
\hyphenation{Nara-yan}
\hyphenation{multi-message}
\hyphenation{Dimi-tris}
\hyphenation{Pol-ty-rev}
\hyphenation{multi-cast}
\hyphenation{multi-user}
\hyphenation{multi-plex-ing}
\hyphenation{bi-directional}
\hyphenation{comput}

\DeclareMathOperator*{\argmax}{arg\,max}

\DeclarePairedDelimiterX{\norm}[1]{\lVert}{\rVert}{#1}
\DeclarePairedDelimiterX{\abs}[1]{\lvert}{\rvert}{#1}

\usepackage{xparse}

\newcommand*\diff{\mathop{}\!\mathrm{d}}

\let\oldpartial\partial
\renewcommand*{\partial}{\mathop{}\!\oldpartial}

\newcommand{\defeq}{\mathrel{\mathop{:}}=}

\newcommand{\wealth}{\mathsf{Wealth}}

\newcommand{\qtkt}{\qt^{\mathsf{KT}}}

\newcommand{\wealthup}{\wealth^{\texttt{UP}}}

\newcommand\numberthis{\addtocounter{equation}{1}\tag{\theequation}}

\usepackage{amsmath,amsthm,amssymb}
\newtheorem{theorem}{Theorem}

\newtheorem{lemma}[theorem]{Lemma}
\newtheorem{corollary}[theorem]{Corollary}

\newtheorem{proposition}[theorem]{Proposition}
\theoremstyle{definition}

\newtheorem{remark}[theorem]{Remark}

\usepackage{thmtools}
\declaretheoremstyle[
  headfont=\color{red}\normalfont\bfseries,
  bodyfont=\color{red}\normalfont\itshape,
]{colored}
\declaretheoremstyle[
  headfont=\color{blue}\normalfont\bfseries,
  bodyfont=\color{blue}\normalfont\itshape,
]{resolved}

\declaretheoremstyle[
  headfont=\color{blue}\normalfont\bfseries,
  bodyfont=\color{blue}\normalfont\itshape,
]{blue}

\declaretheoremstyle[
  headfont=\color{red}\normalfont\bfseries,
  bodyfont=\color{red}\normalfont\itshape,
]{red}

\renewcommand{\E}{\mathop{\mathsf{E}}}

\newcommand{\qt}{\tilde{q}}

\newcommand{\pv}{\mathbf{p}}

\renewcommand{\epsilon}{\varepsilon}
\renewcommand{\tilde}{\widetilde}
\renewcommand{\hat}{\widehat}
\renewcommand{\th}{\theta}

\newlength{\depthofsumsign}
\setlength{\depthofsumsign}{\depthof{$\sum$}}

\makeatletter
\newcommand*\dotp{\mathpalette\dotp@{.5}}
\newcommand*\dotp@[2]{\mathbin{\vcenter{\hbox{\scalebox{#2}{$\m@th#1\bullet$}}}}}
\makeatother

\newcommand{\bb}{\mathbf{b}}

\newcommand{\ct}{\tilde{c}}
\newcommand{\low}{\mathrm{low}}
\newcommand{\up}{\mathrm{up}}

\renewcommand{\rhob}{\bm{\rho}}

\makeatletter
\newcommand\footnoteref[1]{\protected@xdef\@thefnmark{\ref{#1}}\@footnotemark}
\makeatother

\newif\iftit
\renewcommand{\defeq}{\triangleq}

\tittrue

\hyphenation{op-tical net-works semi-conduc-tor}

\newcommand{\scndrevision}[1]{\textcolor{black}{#1}}
\newcommand{\eqnref}[1]{Eq.~\eqref{#1}}

\urlstyle{tt}
\iftit
\newcommand{\titrevision}[1]{\textcolor{black}{#1}}
\else
\newcommand{\titrevision}[1]{\textcolor{black}{#1}}
\fi

\begin{document}
%
\title{On Confidence Sequences for Bounded Random Processes via Universal Gambling Strategies}
%
%
%

\author{J. Jon Ryu
and~Alankrita~Bhatt
\thanks{Manuscript received June 27, 2023; revised August 13, 2024; accepted August 20, 2024. 
This work was supported in part by the National Science Foundation under Grant CCF-1911238.
\emph{(This work was done in part while the authors were graduate students at UC San Diego.)
(Corresponding author: J.~Jon~Ryu.)}}
\thanks{J. Jon Ryu is with the Department of Electrical Engineering and Computer Science, Massachusetts Institute of Technology, Cambridge,
MA 02139, USA. e-mail: \texttt{\href{mailto:jongha@mit.edu}{jongha@mit.edu}}.}
\thanks{Alankrita Bhatt is with the Computing + Mathematical Sciences Department, California Institute of Technology, Pasadena, CA 91125, USA. e-mail: \texttt{\href{mailto:abhatt@caltech.edu}{abhatt@caltech.edu}}.}
\thanks{Communicated by F.~Orabona, Associate Editor for Machine Learning and Statistics.}}
\maketitle

\begin{abstract}
This paper considers the problem of constructing a confidence sequence, which is a sequence of confidence intervals that hold uniformly over time, for estimating the mean of bounded real-valued random processes.
This paper revisits the gambling-based approach established in the recent literature from a natural \emph{two-horse race} perspective, and demonstrates new properties of the resulting algorithm induced by Cover (1991)'s universal portfolio.
The main result of this paper is a new algorithm based on a mixture of lower bounds, which closely approximates the performance of Cover's universal portfolio with constant per-round time complexity. 
A higher-order generalization of a lower bound on a logarithmic function in (Fan et al., 2015), which is developed as a key technique for the proposed algorithm, may be of independent interest.

\end{abstract}

\begin{IEEEkeywords}
Confidence sequences, time-uniform confidence intervals, gambling, universal portfolios.
\end{IEEEkeywords}

%
\IEEEpeerreviewmaketitle

\section{Introduction}
\iftit
\IEEEPARstart{S}{uppose}
\else
Suppose 
\fi
that $(Y_t)_{t=1}^{\infty}$ is a $[0,1]$-valued stochastic process such that $\E[Y_t|Y^{t-1}]\equiv \mu$ for any $t\ge 1$, for some unknown mean parameter $\mu\in(0,1)$. 
Here we use a shorthand notation $Y^t\defeq (Y_1,\ldots, Y_t)$.
A \emph{confidence sequence} for the mean parameter $\mu$ at level $1-\d$ is defined as a sequence of sets $(\Cc_t)_{t=1}^{\infty}$ such that $\Cc_t\subseteq (0,1)$ is a function of $Y^{t-1}$ and
\[
\P(\mu\in \Cc_t, \forall t\ge 1)\ge 1-\d.
\]
A confidence sequence under this setting can be applied to solving some fundamental problems in statistics, such as sequentially estimating the mean of a bounded distribution with \iid samples~\citep{Waudby-Smith--Ramdas2020b,Orabona--Jun2021}, or constructing a time-uniform confidence interval for the mean of a fixed set of numbers by random sampling without replacement~\citep{Waudby-Smith--Ramdas2020a}.
{This notion is better suited in real applications than the classical confidence sets (or intervals) which only applies to a single time instance.
For example, suppose that a practitioner wishes to sequentially estimate the mean of such a stochastic process. 
The user wishes to construct a confidence interval, so that she can choose when to stop sample to achieve a desired precision and confidence level a priori. 
A confidence sequence allows the user to determine adaptively in the sequential inference setting thanks to the time uniformity; see \citep{Ramdas--Ruf--Larsson--Koolen2020} and \citep{Grunwald--de-Heide--Koolen2020}.}

The idea of time-uniform confidence sequences dates back to \citet{Hoeffding1963}, \citet{Darling--Robbins1967}, and \citet{Lai1976}.
\titrevision{Rather ignored for past decades}, 
the idea has been revived recently in the statistics community by a series of papers~\citep{Ramdas--Ruf--Larsson--Koolen2020,Waudby-Smith--Ramdas2020b,Howard--Ramdas--McAuliffe--Sekhon2021} and in the computer science community~\citep{Jun--Orabona2019,Orabona--Jun2021}, to name a few, even beyond the boundedness assumptions on the stochastic processes we consider in this paper. 

As noted above, this paper studies how to construct a confidence sequence for the mean of bounded random processes (with known support), especially via the ``gambling'' approach established by a recent line of research. 
The most closely related work is \citep{Waudby-Smith--Ramdas2020b,Orabona--Jun2021}.
\titrevision{\citet{Waudby-Smith--Ramdas2020b} proposed a gambling framework to construct confidence sequences of bounded real-valued sequences.
Several betting strategies were proposed, and their analytical behavior of resulting confidence sequences were established.
\citet{Orabona--Jun2021} proposed to apply the universal portfolio (UP) \titrevision{method} of \citet{Cover1991} to construct a confidence sequence, and obtained an analytical expression that bounds the UP-induced confidence sequence based on \titrevision{the regret upper bound (which corresponds to a wealth \emph{lower} bound)} of UP in terms of the logarithmic wealth.
They demonstrated an excellent empirical performance of the proposed algorithms especially in a small-sample regime.}
The proposed algorithms dubbed as PRECiSE, however, is based on a rather intricate regret analysis and does not naturally result in confidence \emph{intervals}. 
\titrevision{In their work, \citet{Orabona--Jun2021} also established a more relaxed lower bound that directly generates confidence intervals without introducing excessive slackness.}

\titrevision{The main goal of this paper is twofold.
First, we provide a new perspective of the \emph{two-horse race} to the existing gambling-based approach which was proven effective by \citet{Waudby-Smith--Ramdas2020a,Waudby-Smith--Ramdas2020b,Orabona--Jun2021}, which admits a more natural interpretation in gambling and leads to a conceptual simplification.
Building upon the framework, we propose a new method to approximate the performance of the UP closely and fast in a more direct manner than \citet{Orabona--Jun2021}, without invoking the regret analysis of UP.
Our contributions can be summarized as follows.}

\begin{enumerate}

\item For $\{0,1\}$-valued sequences, we show that Cover's UP can be simplified to a universal two-horse race scheme, which can be implemented with constant complexity per round to construct time-uniform confidence intervals. 
This observation is simple, but has not been realized in the literature.

\item For $[0,1]$-valued sequences, we discuss how to exactly compute the wealth of any mixture of wealth processes of constant bettors (such as Cover's UP) with $O(t)$ complexity at round $t$ and show that it leads to time-uniform confidence intervals.
To circumvent the cumbersome $O(t)$ per-round complexity of the UP, we propose a new UP-like method based on a new lower bound of the wealth of Cover's UP with constant complexity per round. 
{Our algorithm is based on Lemma~\ref{lem:generalized_lower_bound}, a higher-order generalization of the proof technique of \citet[Lemma~4.1]{Fan--Grama--Liu2015} developed for proving exponential inequalities for martingales, which may be of independent interest. Experiments validate that the proposed algorithm gives tight confidence sequences.}
\end{enumerate}

The rest of the paper is organized as follows.
Section~\ref{sec:prelim} is devoted to mathematical preliminaries.
We start with a special case of $\{0,1\}$-valued processes in Section~\ref{sec:hr} and show that there is a simple two-horse-race-based algorithm that emulates the performance of Cover's UP.
In Section~\ref{sec:up}, we then study the general case of $[0,1]$-valued processes and propose a new mixture-of-lower-bound approach.
Related work are briefly discussed in Section~\ref{sec:related} and 
experimental results are presented in Section~\ref{sec:exps}.
We conclude the paper in Section~\ref{sec:conclusion} with some remarks.
All the deferred proofs and technical lemmas can be found in Appendix.

\section{Preliminaries}
\label{sec:prelim}

At its core, the common technique upon which most, if not all, methods for constructing confidence sequences rely on is the following celebrated inequality by \citet{Ville1939} from the martingale theory.
\begin{theorem}[Ville's inequality]
\label{thm:ville}
For a nonnegative supermartingale sequence $(W_t)_{t=0}^{\infty}$ with $W_0>0$, for any $\d>0$, we have
\[
\P\Bigl\{\sup_{t\ge1} \frac{W_t}{W_0} \ge \frac{1}{\d}\Bigr\}\le \d.
\]
\end{theorem}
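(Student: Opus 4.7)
The plan is to prove Ville's inequality by the classical optional stopping argument applied to a carefully chosen stopping time, reducing the time-uniform statement about a supermartingale's supremum to a one-time probabilistic inequality.

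First, I would define the stopping time $\tau \defeq \inf\{t \ge 1 : W_t/W_0 \ge 1/\d\}$, with the convention $\inf \emptyset = \infty$, so that $\tau$ is a stopping time with respect to the filtration to which $(W_t)$ is adapted. The event whose probability we wish to bound is precisely $\{\tau < \infty\}$, and by countable continuity of probability it suffices to bound $\P(\tau \le n)$ uniformly in $n$ and then let $n \to \infty$.

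Next, for each fixed $n \ge 1$, the bounded stopping time $\tau \wedge n$ can be fed into the optional stopping theorem for nonnegative supermartingales (or, alternatively, one can argue directly since $(W_{\tau \wedge n})_{n \ge 0}$ is itself a nonnegative supermartingale, so $\E[W_{\tau \wedge n}] \le \E[W_0] = W_0$). Splitting this expectation according to whether $\tau \le n$ or not and using nonnegativity of $W$ on the complementary event gives
\[
W_0 \;\ge\; \E[W_{\tau \wedge n}] \;\ge\; \E[W_\tau \,\ind{\{\tau \le n\}}] \;\ge\; \frac{W_0}{\d}\, \P(\tau \le n),
\]
where the last step uses the definition of $\tau$, which forces $W_\tau \ge W_0/\d$ on $\{\tau \le n\}$. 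Dividing by $W_0 > 0$ yields $\P(\tau \le n) \le \d$.

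Finally, since $\{\sup_{t \ge 1} W_t/W_0 \ge 1/\d\} = \{\tau < \infty\} = \bigcup_{n \ge 1}\{\tau \le n\}$ is the monotone limit of the events $\{\tau \le n\}$, continuity of probability from below gives the desired bound $\P(\sup_{t \ge 1} W_t/W_0 \ge 1/\d) \le \d$. I do not anticipate a real obstacle here; the only subtlety is being careful that $(W_t)$ is only assumed to be a supermartingale (not a martingale) so we use the supermartingale form of optional stopping, and that nonnegativity is what lets us discard the contribution from $\{\tau > n\}$ when passing from $\E[W_{\tau \wedge n}]$ to $\E[W_\tau \ind{\{\tau \le n\}}]$.
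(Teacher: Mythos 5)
The paper does not prove this statement; it is quoted as a classical result and attributed to Ville (1939), so there is no in-paper argument to compare against. Your proof is the standard and correct one: stop at the first crossing of the threshold, apply optional stopping (equivalently, the fact that the stopped process is again a nonnegative supermartingale) to get $W_0 \ge \E[W_{\tau\wedge n}] \ge (W_0/\d)\,\P(\tau\le n)$, and pass to the limit by continuity from below. One small technical remark: the identity $\{\sup_{t\ge1} W_t/W_0 \ge 1/\d\} = \{\tau<\infty\}$ is not quite exact, since the supremum could equal $1/\d$ without being attained at any finite $t$, in which case $\tau=\infty$. This is harmless — run the same argument with threshold $1/\d - \epsilon$ and let $\epsilon\downarrow 0$, or equivalently note $\P(\sup_t W_t/W_0 \ge c) = \lim_{c'\uparrow c}\P(\exists t: W_t/W_0 \ge c') \le \lim_{c'\uparrow c} 1/c' = 1/c$ — but it is worth stating if you want the inequality exactly as written with the non-strict $\ge$ inside the supremum event.
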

This statement is a sequential, uniform counterpart for nonnegative supermartingales to Markov's inequality for nonnegative random variables.
Since this inequality controls the probability that the sequence $(W_t)_{t=0}^{\infty}$ overshoots a certain threshold uniformly over time, 
the key idea is to construct a \emph{good} martingale sequence $(W_t)_{t=0}^{\infty}$ out of the given sequence $(Y_t)_{t=1}^{\infty}$ \scndrevision{such that the wealth sequence grows as rapidly as possible for any $m\neq \mu$,}
so that the sequence of events controlled by this inequality can be translated to a \scndrevision{tighter} confidence sequence for a target parameter $\mu$; \scndrevision{the intuition will be visually demonstrated in Fig.~\ref{fig:evolution} in Section~\ref{sec:exp:evolution}}.

\titrevision{As explored in \citep{Hendriks2021,Jun--Orabona2019,Waudby-Smith--Ramdas2020b,Orabona--Jun2021}}, a natural way to construct a supermartingale is via a gambling, and we thus start by introducing the gambling formalism of this approach.

\subsection{Supermartingales from Gambling}
A natural way to construct a (super)martingale sequence is to consider the wealth process from a \emph{(sub)fair gambling}.
Let $K\ge 2$ and 
let $\Mc\subseteq\Real_+^K$ be a set of \emph{odds vectors}.
We consider an abstract setting of gambling defined by the following \emph{multiplicative} game.
Let $\wealth_0>0$ be the initial wealth of a gambler.
For each time $t\ge 1$, a gambler chooses a bet $\bb_t=\bb_t(\xb^{t-1})\in\Delta_{K-1}$ as a function of the previous odds vectors $\xb^{t-1}$; such a betting strategy is said to be \emph{causal} (or \emph{nonanticiapting}). 
\titrevision{Here, $\Delta_{K-1}\defeq \{\pv\in\Real_{\ge0}^K \suchthat p_1+\ldots+p_K=1\}$ denotes the $(K-1)$-dimensional probability simplex.}
At the end of each round, the odds vector $\xb_t$ is revealed, and the gambler's wealth gets \emph{multiplied} by $\langle\bb_t,\xb_t\rangle$.
Therefore, after round $t$, the gambler's wealth can be written as 
\[
\wealth_t(\xb^t) = \wealth_0 \prod_{i=1}^t \langle \bb_i,\xb_i\rangle.
\]

We call the odds vector sequence $(\xb_t)_{t=1}^{\infty}$ \emph{subfair}, if $\E[\xv_t|\xv^{t-1}]\le\ones$ for every $t$, where the inequality holds coordinatewise \titrevision{and $\ones\defeq (1,\ldots,1)$ denotes the all-one vector.}
Under this condition, the resulting wealth process from this game is always a supermartingale as one may intuitively expect.
\begin{proposition}
\label{prop:wealth_is_martingale}
In any gambling with the cumulative wealth of the form
$\wealth_t = \wealth_0 \prod_{i=1}^t \langle \bb_i,\xb_i\rangle$,
if $(\xb_t)_{t=1}^{\infty}$ is (sub)fair, then any wealth process $(\wealth_t)_{t=1}^{\infty}$ attained by a causal betting strategy is a (super)martingale.
\end{proposition}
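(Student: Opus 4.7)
The plan is to verify the two defining properties of a (super)martingale: adaptedness together with integrability, and the one-step (super)martingale inequality. Adaptedness is immediate since $\wealth_t$ is by construction a deterministic function of $\xb^t$, hence measurable with respect to the natural filtration $\Fc_t \defeq \sigma(\xb^t)$. The heart of the argument is the conditional-expectation computation. I would exploit the multiplicative update
\[
\wealth_t = \wealth_{t-1}\,\langle \bb_t, \xb_t\rangle,
\]
where by the causality assumption both $\wealth_{t-1}$ and $\bb_t$ are $\Fc_{t-1}$-measurable. Pulling these out of the conditional expectation and invoking linearity of the inner product (a finite sum over $K$ terms) together with linearity of conditional expectation gives
\[
\E[\wealth_t \mid \Fc_{t-1}] = \wealth_{t-1}\,\langle \bb_t, \E[\xb_t \mid \Fc_{t-1}]\rangle.
\]

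The subfairness hypothesis $\E[\xb_t \mid \xb^{t-1}] \le \ones$ then enters coordinatewise, and combined with the nonnegativity of $\bb_t \in \Delta_{K-1}$ it yields
\[
\langle \bb_t, \E[\xb_t \mid \xb^{t-1}]\rangle \le \langle \bb_t, \ones\rangle = 1,
\]
the final equality being the simplex constraint on $\bb_t$. Hence $\E[\wealth_t \mid \Fc_{t-1}] \le \wealth_{t-1}$, which is the supermartingale inequality; in the fair case $\E[\xb_t \mid \xb^{t-1}] = \ones$ upgrades this to equality and gives the martingale property. Integrability is then handled by a short induction: $\wealth_0$ is a positive constant, and iterating $\E[\wealth_t] \le \E[\wealth_{t-1}]$ (which uses nonnegativity of $\wealth_t$ together with the tower property) shows $\E[\wealth_t] \le \wealth_0 < \infty$ for all $t$.

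No real obstacle is anticipated; the argument is a textbook application of the tower property and linearity of inner product/expectation. The only point worth making explicit is that \emph{causal} (or \emph{nonanticipating}) means precisely that $\bb_t$ is $\sigma(\xb^{t-1})$-measurable, which is what licenses pulling $\bb_t$ out of the conditional expectation in the first displayed identity. I would flag this once at the outset of the proof so that the two subsequent displays read as routine.
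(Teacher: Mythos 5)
Your proof is correct and follows essentially the same route as the paper's one-line argument: condition on $\xb^{t-1}$, pull out $\wealth_{t-1}$ and $\bb_t$ by causality, and use $\langle \bb_t, \E[\xb_t \mid \xb^{t-1}]\rangle \le \langle \bb_t, \ones\rangle = 1$. The additional remarks on adaptedness and integrability are fine but not part of the paper's (terser) proof.
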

\begin{proof}
For every $t$, we have
\begin{align*}
\E[\wealth_t|\xv^{t-1}]
&=\wealth_{t-1}\langle \bb_t,\E[\xv_t|\xv^{t-1}]\rangle \\
&\le \wealth_{t-1} \langle \bb_t,\ones\rangle =\wealth_{t-1}.\qedhere
\end{align*}
\end{proof}

{We remark that this abstract formulation captures several standard settings of gambling as special instances; see Table~\ref{tab:gambling_examples}. 
For example, if $\Mc=\{2\eb_1,2\eb_2\}$, it corresponds to \titrevision{the standard (fair) \emph{coin toss}~\citep{Cover--Thomas2006} (or coin betting~\citep{Orabona--Pal2016}) game.}
If $\Mc=\{o_1\eb_1,\ldots,o_K\eb_K\}$ for some $o_1,\ldots,o_K>0$, then the game becomes a \emph{horse race} with odds $o_1,\ldots,o_K$~\citep[Chap.~6]{Cover--Thomas2006}.
The most general case is when $\Mc=\Real_{>0}^K$, where the game is typically known as a \titrevision{\emph{portfolio selection}} in a stock market~\citep{Cover1991}.
The \emph{continuous} coin toss problem that corresponds to $\Mc=\{[2\ct,2(1-\ct)]\suchthat \ct\in[0,1]\}$ was considered by \citet{Orabona--Pal2016} to translate the idea of universal betting for coin toss to the domain of online linear optimization. In what follows, betting strategy and portfolio are used interchangeably, where the latter is used typically when the outcomes are continuous like the stock market.}

\subsection{Two-Horse Race and Its Continuous Extension}
In this paper, we consider the \emph{two-horse race} with odds $o_1,o_2>0$, which corresponds to $\Mc_{o_1,o_2}\defeq\{c o_1\eb_1+(1-c)o_2\eb_2\suchthat c\in\{0,1\}\}$. 
Each odds vector at time $t$ can be written as $\xb_t=[o_1c_t,o_2(1-c_t)]$ with $c_t\in\{0,1\}$, where $c_t$ can be understood as an index of the winning horse. 
\titrevision{Given the fixed odds $o_1$ and $o_2$, the odds vector sequence $x_t$ is fully characterized by the outcomes $c_t$ so we will often refer to this sequence $c_t$ instead of the full odds vector $\xb_t$ in our discussion below.}
Since $c_i\in\{0,1\}$, the multiplicative gain at each round is equivalently expressed as
\[
o_1c_i b_i + o_2(1-c_i)(1-b_i) 
= (o_1 b_i)^{c_i} (o_2(1-b_i))^{1-c_i},
\]
where $b_i$ denotes the fraction of bet on horse 1.

As a natural extension, we also consider the \emph{continuous} two-horse race game with odds $o_1,o_2>0$ which corresponds to $\tilde{\Mc}_{o_1,o_2}\defeq\{\ct o_1\eb_1+(1-\ct)o_2\eb_2\suchthat \ct\in[0,1]\}$. Now, unlike the previous case where there is a single winner for each round, the outcome of the game $\ct$ is $[0,1]$-valued.
Strictly speaking, this game can be better understood as a \emph{structured} two-stock market, as there is no single winning horse in this setup.

\titrevision{We note that we use $c_t\in\{0,1\}$ and $\ct_t\in[0,1]$ to denote generic outcomes which could be deterministic, as opposed to the stochastic process $(Y_t)_{t=1}^\infty$.}

\subsection{The Blueprint}
\label{sec:blueprint}
We now describe the gambling formalism we take in this paper, which we adopt from \citep{Hendriks2021,Jun--Orabona2019,Waudby-Smith--Ramdas2020b,Orabona--Jun2021}. 
Most of the following exposition can be also found in \citep[Theorem~1]{Waudby-Smith--Ramdas2020b} and \citep[Section~4.1]{Waudby-Smith--Ramdas2020b}, but we also include \titrevision{some new arguments such as the two-horse-race-based exposition and the converse of Corollary~\ref{cor:prop:wealth_is_martingale} stated below.}

Recall that we sequentially observe a sequence $(Y_t)_{t=1}^{\infty}$ such that $\E[Y_t|Y^{t-1}]\equiv \mu$ for every $t\ge 1$ and some $\mu\in(0,1)$, and the goal is to estimate $\mu$ at each time $t$ based on the observation $Y^{t-1}$.
To estimate the unknown $\mu$, 
we consider the two-horse race with the odds vector 
\[
\xb_t=\xb_t(Y_t;m)=\Bigl[\frac{Y_t}{m},\frac{1-Y_t}{1-m}\Bigr]
\numberthis\label{eq:odds_vector}
\]
for a parameter $m\in(0,1)$. 
Then, as a corollary of Proposition~\ref{prop:wealth_is_martingale}, we can prove:

\begin{corollary}
\label{cor:prop:wealth_is_martingale}
Assume that $\E[Y_t|Y^{t-1}]\equiv \mu$ for every $t\ge 1$ and some $\mu\in(0,1)$. 
Then, the wealth process from the continuous two-horse race 
with the odds vector $\xv_t= [\frac{Y_t}{m},\frac{1-Y_t}{1-m}]$ is a martingale if $m=\mu$.
Conversely, if $m\neq \mu$, there exists a causal betting strategy whose wealth process is strictly submartingale.
\end{corollary}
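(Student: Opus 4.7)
The plan is to split Corollary~\ref{cor:prop:wealth_is_martingale} into its two directions and reduce each to Proposition~\ref{prop:wealth_is_martingale}, which only asks us to check the conditional expectation of the odds vector $\xv_t$. The entire argument is really about a single expectation computation followed by a case split.

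First, for the forward direction ($m=\mu$), I would compute
\[
\E[\xv_t\cond Y^{t-1}] \;=\; \Bigl[\frac{\E[Y_t\cond Y^{t-1}]}{m},\;\frac{1-\E[Y_t\cond Y^{t-1}]}{1-m}\Bigr] \;=\; \Bigl[\frac{\mu}{m},\;\frac{1-\mu}{1-m}\Bigr].
\]
When $m=\mu$ the right-hand side is exactly $\ones$, so the odds sequence $(\xv_t)_{t\ge 1}$ is fair in the sense of Section~\ref{sec:prelim}. The equality case of Proposition~\ref{prop:wealth_is_martingale} then immediately yields that every causal wealth process is a martingale.

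For the converse ($m\neq\mu$), I would exhibit an explicit \emph{constant} bettor whose wealth process is strictly submartingale, chosen to sit on the coordinate whose conditional mean exceeds $1$. If $m>\mu$, then $(1-\mu)/(1-m)>1$, and taking $\bb_t\equiv \eb_2$ gives
\[
\E[\wealth_t\cond Y^{t-1}] \;=\; \wealth_{t-1}\cdot\frac{1-\mu}{1-m} \;>\; \wealth_{t-1}.
\]
Symmetrically, if $m<\mu$ then $\mu/m>1$, and $\bb_t\equiv \eb_1$ yields $\E[\wealth_t\cond Y^{t-1}]=\wealth_{t-1}\cdot (\mu/m)>\wealth_{t-1}$. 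In either case the strict inequality in the submartingale property holds almost surely at every round.

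There is no genuine obstacle here: both directions follow from a one-line expectation computation. The only subtlety I would flag when writing the proof in detail is the interpretation of \emph{strictly} submartingale, which I take to mean pointwise strict inequality $\E[\wealth_t\cond Y^{t-1}]>\wealth_{t-1}$ at each $t$ (as delivered by the constant bettors above), rather than merely strictness of the unconditional means.
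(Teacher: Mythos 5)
Your forward direction is exactly the paper's argument: compute $\E[\xv_t|Y^{t-1}]=[\frac{\mu}{m},\frac{1-\mu}{1-m}]$, observe that this equals $\ones$ when $m=\mu$, and invoke the equality case of Proposition~\ref{prop:wealth_is_martingale}. The converse also follows the paper's template (exhibit a constant bettor whose conditional mean gain exceeds $1$), but your specific choice of the vertex bettors $\eb_1$ and $\eb_2$ creates a genuine problem with the pointwise strictness that you yourself adopt as the meaning of ``strictly submartingale.'' Take $m<\mu$ and $\bb_t\equiv\eb_1$: the per-round gain is $\frac{Y_t}{m}$, so on any sample path with some $Y_s=0$ the wealth is absorbed at $0$, and for all later $t$ the identity $\E[\wealth_t|Y^{t-1}]=\wealth_{t-1}\cdot\frac{\mu}{m}$ reads $0=0$. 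Hence $\E[\wealth_t|Y^{t-1}]>\wealth_{t-1}$ fails on the event $\{\wealth_{t-1}=0\}$, which has positive probability whenever $\P(Y_s=0)>0$ (e.g., for Bernoulli processes, which are squarely within the scope of the corollary). So the claim that ``the strict inequality holds almost surely at every round'' is false for your bettor.

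The fix is precisely what the paper does: choose an \emph{interior} constant bet, e.g.\ $b\in(m,1)$ when $m<\mu$ (and symmetrically when $m>\mu$). Since then $b>0$ and $1-b>0$, the gain $\frac{Y_t}{m}b+\frac{1-Y_t}{1-m}(1-b)$ is at least $\min\{\frac{b}{m},\frac{1-b}{1-m}\}>0$ for every $Y_t\in[0,1]$, so the wealth remains strictly positive on all paths; and the conditional mean gain $\frac{\mu}{m}b+\frac{1-\mu}{1-m}(1-b)$ still exceeds $1$, because this map equals $1$ at $b=m$ and is strictly increasing in $b$ when $m<\mu$. With that single modification your proof coincides with the paper's.
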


\begin{proof}
If $m=\mu$, $(\xb_t)_{t=1}^{\infty}$ is fair by construction, \ie $\E[\xb_t|\xb_{t-1}]=[\frac{\mu}{m},\frac{1-\mu}{1-m}]=[1,1]$, so we can apply Proposition~\ref{prop:wealth_is_martingale}.
For the case $m\neq \mu$, we assume $m<\mu$ without loss of generality; the proof for $m>\mu$ is similar. 
Then, $b\mapsto\frac{\mu}{m}b + \frac{1-\mu}{1-m}(1-b)$ becomes a strictly increasing function, and 
\[
\frac{\mu}{m}b + \frac{1-\mu}{1-m}(1-b) > 1
\]
for any $b\in(m,1)$.
Hence, in this case, any constant betting strategy $\bb_t=[b,1-b]$ with $b\in(m,1)$ satisfies
\begin{align*}
\E[\wealth_t|\xv^{t-1}]
&=\wealth_{t-1}\langle \bb_t,\E[\xv_t|\xv^{t-1}]\rangle\\
&=\wealth_{t-1} \Bigl(\frac{\mu}{m}b + \frac{1-\mu}{1-m}(1-b)\Bigr)\\
&> \wealth_{t-1},
\end{align*}
implying that the wealth process is strictly submartingale.
\end{proof}

Suppose that we have a gambling strategy $\bb_t(\xb^{t-1};m)$ for each $m$, and let $\wealth_t(\xb^{t};m)$ be the wealth process of the strategy for the two-horse race game in~\eqnref{eq:odds_vector} parameterized by $m$.\footnote{All the gambling strategies $\bb_t(\xb^{t-1};m)$ considered in the current paper are independent of $m$, but we explicitly include the dependence to subsume a general use-case. For example, \citet{Waudby-Smith--Ramdas2020a} studied betting strategies that depend on $m$.}
Since $(\wealth_t(\xv^t;\mu))_{t\ge 0}$ is a martingale (Corollary~\ref{cor:prop:wealth_is_martingale}),
by Ville's inequality (Theorem~\ref{thm:ville}), for any $\d \in(0,1)$, 
\[
\P\Bigl\{\sup_{t\ge1} \frac{\wealth_t(\xb^{t};\mu)}{\wealth_0}\ge \frac{1}{\delta}\Bigr\} \le \delta,
\numberthis\label{eq:apply_ville}
\]
or equivalently,
\[
\P\{\mu \in \Cc_t(Y^t;\d), \forall t\ge 1\} \ge 1-\delta,
\]
where we define the confidence set 
\[
\Cc_t(Y^t;\d)
\defeq\Bigl\{m\in(0,1)\suchthat \sup_{1\le i\le t} \frac{\wealth_i(\xb^i;m)}{\wealth_0}
< \frac{1}{\d}\Bigr\}.
\]
Since $(\Cc_t(Y^t;\d))_{t=1}^{\infty}$ is nonincreasing (\ie $C_{t-1}\subseteq \Cc_t$) by construction, we can interpret that the confidence sequence $(\Cc_t)_{t=1}^{\infty}$ sequentially \emph{refines} the estimate for $\mu$ at every round $t$ by \emph{rejecting} a candidate parameter $m$ such that 
$\wealth_t(\xb^t;m)>\frac{1}{\d}$.

Here is a high-level interpretation of the gambling approach.
Hypothetically, we run the two-horse race with parameter $m$ for each $m\in (0,1)$ based on the observation $(Y_t)_{t=1}^{\infty}$.
At the very beginning, we start with the entire interval $C_0=(0,1)$ as the candidate list for $\mu$.
After each round of gambling upon observing $Y_t$, we compute the cumulative wealth from each horse race, and if it exceeds a prescribed threshold, which is $1/\d$ for a level-$(1-\d)$ confidence sequence, we remove the associated parameter $m$ from the candidate list $C_{t-1}$; at the end of the round, $\Cc_t$ then consists of the remaining values.
Intuitively, we can remove such values $m$ with confidence as the wealth process from the horse-race with parameter $\mu$ is martingale, and thus will not exceed the threshold most likely; that is, if the wealth exceeds the threshold, the parameter $m$ is likely not $\mu$.

This intuition explains why we can expect a better gambling strategy to result in a tighter confidence sequence:
a \emph{good} strategy would grow its wealth as fast as possible from the horse races with $m\neq \mu$, and thus rejects those values of $m$ at an early stage with few observations.
Note that the converse part of Corollary~\ref{cor:prop:wealth_is_martingale} ensures that there exists a betting strategy that makes the wealth process a strict submartingale. 
In an ideal scenario, therefore, one can expect that a good gambling strategy may make arbitrarily large money in the long run from the horse race with $m\neq \mu$, leaving only the true $\mu$ in the candidate list.

\begin{remark}[Different gambling parameterization]
While the setting of gambling we consider is equivalent to those in \citep{Waudby-Smith--Ramdas2020b,Orabona--Jun2021}, the literature uses a different convention.
For the two-horse race setting with odds $\frac{1}{m}$ and $\frac{1}{1-m}$ and a betting strategy $(b_t)_{t=1}^{\infty}$, they write the multiplicative wealth as
\[
\frac{1}{m} \ct_t b_t + \frac{1}{1-m} (1-\ct_t)(1-b_t)
=1+\lambda_t(m) (\ct_t-m),
\]
by viewing the single number $\ct_t-m\in[-m, 1-m]$ as an outcome of the horse race and defining
\[
\lambda_t(m) \defeq \frac{b_t}{m(1-m)}-\frac{1}{1-m}\in \Bigl[-\frac{1}{1-m}, \frac{1}{m}\Bigr]
\numberthis
\label{eq:signed_betting}
\]
as a \titrevision{\emph{scaled bet}}.
The downside of this standard convention is that the (scaled) betting in~\eqnref{eq:signed_betting} must depend on the underlying odds (and thus on the parameter $m$) by the range it can take, \titrevision{which we view as rather unnatural.}
We believe that
the horse-race language we adopt in this paper separates $m$ from the betting strategy, and thus admits a cleaner interpretation.
\end{remark}

\begin{table*}[t]
\centering
\caption{Types of gambling. 
{The horse race with $K=2$ and $o_1=o_2=2$ reduces to the fair-coin toss. 
The stock investment becomes the horse race if the market vectors are restricted to scaled Kelly market vectors, \ie $\xb_t\in\{o_1\eb_1,\ldots,o_K\eb_K\}$, and becomes continuous horse race if the market vectors $\xb_t$ lie on the scaled simplex $(o_1,\ldots,o_K)\odot \Delta_{K-1}$, where $\odot$ denotes the elementwise product.
Note the following difference in the convention: in the coin toss and horse race, the outcomes are not associated with the odds, while the outcome of the stock investment, which is the market vector $\xb_t$, inherently captures the ``odds'' of the game.}}
\begin{small}
\begin{tabular}{c c c c}
\toprule
     & \textbf{Outcome} & \textbf{Bet} & \textbf{Multiplicative gain} $\frac{\wealth_t}{\wealth_{t-1}}$\\
\midrule
    Coin toss & $c_t\in\{0,1\}$ & $b_t\in[0,1]$ & $(2b_t)^{\ones\{c_t=1\}}(2(1-b_t))^{\ones\{c_t=0\}}$ \\
    Horse race & $\cb_t\in\{\eb_1,\ldots,\eb_K\}$ & $\bb_t\in\Delta_{K-1}$ & $(o_1b_{t1})^{\ones\{\cb_t=\eb_1\}}\cdots(o_Kb_{tK})^{\ones\{\cb_t=\eb_K\}}$ \\
\midrule
    Continuous coin toss & $\ct_t\in[0,1]$ & $b_t\in[0,1]$ & $2b_t \ct_t+ 2(1-b_t)(1-\ct_t)$ \\
    Continuous horse race & $\tilde{\cb}_t\in \Delta_{K-1}$ & $\bb_t\in\Delta_{K-1}$ & $o_1b_{t1}\ct_{t1}+\ldots+o_Kb_{tK}\ct_{tK}$ \\
\midrule
    Stock investment 
    & $\xb_t\in\Real_{>0}^K$
    & $\bb_t\in\Delta_{K-1}$ 
    & $\langle \xb_t, \bb_t\rangle$
    \\
\bottomrule     
\end{tabular}
\end{small}
\label{tab:gambling_examples}
\end{table*}

\subsection{Achievable Wealth Processes and the \scndrevision{Method of Mixtures}}
As described above, it suffices to directly construct a valid wealth process for a confidence sequence, without explicitly finding a causal betting strategy that achieves it.
In this paper, the central technique we use is to consider a wealth process defined by a \emph{mixture} of a collection of wealth processes.
The purpose of this section is to ensure that we can use any \titrevision{\emph{mixture of wealth processes} (or \emph{mixture wealth}} in short) in constructing a confidence sequence.

To show the main result of this section, Corollary~\ref{cor:mixture_is_achievable}, we first state the following theorem which characterizes when a given sequence of nonnegative functions can be \emph{realized} by a causal betting strategy, under a mild regularity assumption on the set of market vectors $\Mc$ that holds for all games in Table~\ref{tab:gambling_examples}.
\titrevision{In the statement below, we use $\circ$ to denote a concatenation between two sequences, \eg $\xv^{t-1}\circ \xv_t = \xv^{t}$.}
\titrevision{
\begin{theorem}
\label{thm:equivalent_condition_achievable_wealth}
Suppose that 
$\{o_1\eb_1,\ldots,o_K\eb_K\}\subseteq \Mc$ for some $o_1,\ldots,o_K>0$.
Let $(\Psi_t\suchthat \Mc^{t-1}\to \Real_{\ge 0})_{t=1}^{\infty}$ be a sequence of nonnegative functions, where each $\Psi_t$ at time $t$ maps past odds vectors $\xv^{t-1}\in\Mc^{t-1}$ to a nonnegative real number.
Assume that the following conditions hold:
\begin{itemize}
\item [(A1)] (Consistency)
For any $t\ge 1$, any $j\in[K]$, and any $\xb^{t-1}\in\Mc^{t-1}$,
\[
\Psi_{t-1}(\xb^{t-1})\ge \sum_{j=1}^K \frac{1}{o_j} \titrevision{\Psi_t(\xb^{t-1}\circ o_j\eb_j)}.
\numberthis
\label{eq:wealth_consistency}
\]
\item [(A2)] (Convexity)
For any $t\ge 1$, any $j\in[K]$, and any $\xb^{t}\in\Mc^{t}$, 
\[
\sum_{j=1}^K \frac{x_{tj}}{o_j} \titrevision{\Psi_t(\xb^{t-1}\circ o_j\eb_j)} \ge \Psi_t(\xb^t).
\numberthis
\label{eq:wealth_convexity}
\]
\end{itemize}
Then, there exists a causal betting strategy $\bb_t(\xb^{t-1})\in \Delta_{K-1}$ that guarantees wealth at least $\wealth_0\Psi_t(\xb^t)$ for any market sequence $\xb^t\in \Mc^t$.
Conversely, if $\wealth_0\Psi_t(\xb^t)$ is the wealth achieved by a causal betting strategy given $\xb^t$, then $(\Psi_t)_{t=1}^{\infty}$ satisfies (A1) and (A2) with equality.
\end{theorem}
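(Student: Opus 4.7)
The plan is to prove the two directions by direct construction and direct calculation, respectively.

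For the sufficiency direction, at each time $t\ge 1$ and every $\xb^{t-1}\in\Mc^{t-1}$ with $\Psi_{t-1}(\xb^{t-1})>0$, I would define a nonnegative vector by
\[
\tilde{b}_{tj}(\xb^{t-1}) \defeq \frac{\Psi_t(\xb^{t-1}\circ o_j\eb_j)}{o_j\,\Psi_{t-1}(\xb^{t-1})}, \qquad j\in[K],
\]
which by the consistency condition (A1) has coordinate sum at most one. The causal strategy $\bb_t\in\Delta_{K-1}$ is obtained by placing the residual mass $1-\sum_j \tilde{b}_{tj}$ on any fixed coordinate; because $\Mc\subseteq\Real_{\ge 0}^K$, this extension can only increase the gain $\langle\bb_t,\xb_t\rangle$. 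An induction on $t$ then gives $\wealth_t\ge \wealth_0\Psi_t(\xb^t)$: by this extension property and the convexity condition (A2),
\[
\Psi_{t-1}(\xb^{t-1})\,\langle \bb_t, \xb_t\rangle \;\ge\; \sum_{j=1}^K \frac{x_{tj}}{o_j}\Psi_t(\xb^{t-1}\circ o_j\eb_j) \;\ge\; \Psi_t(\xb^t),
\]
which combined with the inductive hypothesis $\wealth_{t-1}\ge \wealth_0 \Psi_{t-1}(\xb^{t-1})$ closes the step.

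For the converse, suppose $\wealth_0\Psi_t(\xb^t)$ is attained exactly by some causal $\bb_t$; the wealth recursion reads $\Psi_t(\xb^t)=\Psi_{t-1}(\xb^{t-1})\langle\bb_t(\xb^{t-1}),\xb_t\rangle$ for every $\xb^t\in\Mc^t$. Specializing $\xb_t=o_j\eb_j\in\Mc$ (legal by the standing assumption on $\Mc$) yields $\Psi_t(\xb^{t-1}\circ o_j\eb_j)=o_j b_{tj}(\xb^{t-1})\Psi_{t-1}(\xb^{t-1})$; summing against weights $1/o_j$ and using $\sum_j b_{tj}=1$ gives (A1) with equality, and substituting this identity back into the recursion at a general $\xb_t\in\Mc$ gives (A2) with equality.

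The only obstacle I anticipate is boundary behavior at $\Psi_{t-1}(\xb^{t-1})=0$, where the defining ratio is undefined. In that case, (A1) together with nonnegativity of $\Psi_t$ forces $\Psi_t(\xb^{t-1}\circ o_j\eb_j)=0$ for every $j$, and then (A2) forces $\Psi_t(\xb^t)=0$ for every $\xb_t\in\Mc$; the wealth bound becomes vacuous and any bet in $\Delta_{K-1}$ suffices. With this corner case handled, the rest is bookkeeping, and the result can be read as a clean duality between an achievable wealth sequence and its coordinates at the $K$ pure outcomes $o_j\eb_j$.
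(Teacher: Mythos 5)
Your proposal is correct and follows essentially the same route as the paper's own proof: the forward direction constructs the bet from the ratios $\Psi_t(\xb^{t-1}\circ o_j\eb_j)/(o_j\Psi_{t-1}(\xb^{t-1}))$ (see \eqref{eq:wealth_to_betting}) and closes the induction via (A2), and the converse specializes the wealth recursion to the pure outcomes $o_j\eb_j$ exactly as in Appendix~\ref{app:proofs}. Your explicit treatment of the degenerate case $\Psi_{t-1}(\xb^{t-1})=0$ is a small point of extra care that the paper leaves implicit, but it does not change the argument.
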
}
\titrevision{It is worth noting that \citet{Cover1966} studied an equivalent condition for achievable scores in predicting binary sequences under the Hamming score.
Theorem~\ref{thm:equivalent_condition_achievable_wealth} can be viewed as an analogous result for gambling, \scndrevision{which can be viewed as a prediction game under the log score.} 
Its proof, which can be found in Appendix~\ref{app:proofs}, is based on a simple inductive argument, but it is not a direct adaptation of Cover's proof.}

{\titrevision{Note that given any achievable wealth process, we can explicitly define a causal betting strategy that achieves the wealth process using in the proof of Theorem~\ref{thm:equivalent_condition_achievable_wealth} in Appendix~\ref{app:proofs}; see \eqnref{eq:wealth_to_betting} therein for an explicit expression of the betting strategy.}
{This implies that computing the cumulative wealth of a betting strategy and computing the betting strategy itself are essentially equivalent.}}
Another immediate consequence of this theorem is that, for a class of sequences of achievable wealth functions, their mixture is always achievable. 
\begin{corollary}
\label{cor:mixture_is_achievable}
Consider a set of betting strategies $(\bb_{t}^{\th})$, where for each strategy parameterized by $\th\in\Th$, each of whose wealth is lower bounded by $\wealth_0\Psi_{t}^{\th}(\xb^t)$ for any $t\ge 1$ and $\xb^t\in \Mc^t$.
Pick any probability measure $\pi(\th)$ on $\Th$ and define a mixture $\tilde{\Psi}_t^{\pi}(\xb^t)\defeq \int \Psi_{t}^{\th}(\xb^t)\pi(\diff\th)$. 
Then, there always exists a causal betting strategy whose wealth is lower bounded by $\wealth_0\tilde{\Psi}_t^{\pi}(\xb^t)$.
\end{corollary}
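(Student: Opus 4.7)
The plan is to reduce the claim to an application of Theorem~\ref{thm:equivalent_condition_achievable_wealth} by exploiting the fact that both the consistency condition (A1) and the convexity condition (A2) are \emph{affine} inequalities in the function values $\Psi_t,\Psi_{t-1}$. Consequently, any nonnegative linear combination --- in particular, integration against the probability measure $\pi$ --- of sequences satisfying (A1) and (A2) continues to satisfy them. The only subtlety is that the hypothesis gives us $\Psi_t^\theta$ only as a \emph{lower bound} on the actual wealth of $\bb_t^\theta$, so we cannot directly claim (A1)--(A2) for the $\Psi_t^\theta$ themselves; we first have to pass to bona fide wealth processes.

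Concretely, I would introduce the actual wealth ratios
$$f_t^\theta(\xb^t) \defeq \prod_{i=1}^t \langle \bb_i^\theta(\xb^{i-1}), \xb_i\rangle \ge \Psi_t^\theta(\xb^t),$$
where the inequality is the stated hypothesis. By the converse part of Theorem~\ref{thm:equivalent_condition_achievable_wealth} applied to the causal strategy $\bb_t^\theta$, each sequence $(f_t^\theta)_{t\ge 0}$ satisfies (A1) and (A2) with equality. Now set $g_t(\xb^t)\defeq \int_\Theta f_t^\theta(\xb^t)\,\pi(\diff\theta)$, which by monotonicity of the integral dominates $\tilde{\Psi}_t^\pi(\xb^t) = \int_\Theta \Psi_t^\theta(\xb^t)\,\pi(\diff\theta)$.

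The core of the argument is then a one-line Fubini swap: integrating the equalities (A1) and (A2) for $f_t^\theta$ against $\pi$ yields the same identities for $g_t$. For (A1), for instance,
$$g_{t-1}(\xb^{t-1}) = \int_\Theta f_{t-1}^\theta(\xb^{t-1})\,\pi(\diff\theta) = \sum_{j=1}^K \frac{1}{o_j}\int_\Theta f_t^\theta(\xb^{t-1}\circ o_j\eb_j)\,\pi(\diff\theta) = \sum_{j=1}^K \frac{1}{o_j} g_t(\xb^{t-1}\circ o_j\eb_j),$$
and (A2) is analogous. Applying the forward direction of Theorem~\ref{thm:equivalent_condition_achievable_wealth} to $(g_t)$ produces a causal betting strategy whose wealth is at least $\wealth_0 g_t \ge \wealth_0 \tilde{\Psi}_t^\pi$, as required.

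I do not anticipate any real obstacle: the only mild point is measurability of $\theta\mapsto f_t^\theta(\xb^t)$ so that the integrals and Fubini are legitimate, which holds under any reasonable regularity on $(\bb_t^\theta)$ and $\pi$ (and in particular for the Dirichlet-mixture instantiations of Cover's universal portfolio used later). As a bonus, substituting $g_t$ into the explicit construction given in the proof of Theorem~\ref{thm:equivalent_condition_achievable_wealth} yields the familiar posterior-weighted ``Bayesian'' mixture strategy $\tilde{\bb}_t^\pi(\xb^{t-1}) \propto \int_\Theta \bb_t^\theta(\xb^{t-1})\,f_{t-1}^\theta(\xb^{t-1})\,\pi(\diff\theta)$, making the causal realization of the mixture wealth completely explicit.
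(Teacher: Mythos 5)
Your proposal is correct and follows essentially the same route as the paper: pass from the lower bounds $\Psi_t^\theta$ to the actual wealth processes, invoke the converse part of Theorem~\ref{thm:equivalent_condition_achievable_wealth} to get (A1)--(A2) with equality, use linearity of the integral to transfer these to the mixture, and then apply the forward direction. The explicit posterior-weighted betting strategy you mention at the end is a nice addition but not needed for the claim.
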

\begin{proof}
For each $\th\in\Th$, let $\wealth_t^{\th}(\xb^t)$ denote the wealth achieved by $(\bb_t^{\th})_{t=1}^{\infty}$ for $\xb^t$.
Since $(\wealth_t^{\th})_{t\ge 0}$ satisfies (A1) and (A2) automatically by the converse part of Theorem~\ref{thm:equivalent_condition_achievable_wealth}, so does the \emph{mixture wealth} defined by 
$\wealth_t^{\pi}(\xb^t)\defeq\int\wealth_t^{\th}(\xb^t)\pi(\diff\th)$
by linearity of expectation.
Therefore, by Theorem~\ref{thm:equivalent_condition_achievable_wealth}, there always exists a causal strategy whose wealth is at least $ \wealth_t^{\pi}(\xb^t) \ge \wealth_0 \tilde{\Psi}_t^{\pi}(\xb^t)$.
\end{proof}
Hence, in what follows, we can use any mixture of achievable wealth processes (or its lower bound) to construct a confidence sequence.

\section{Two-Horse Race and \texorpdfstring{$\{0,1\}$}{\{0,1\}}-Valued Processes}
\label{sec:hr}
In this section, we first consider a $\{0,1\}$-valued sequence $(Y_t)_{t=1}^{\infty}$ such that $\E[Y_t|Y^{t-1}]\equiv\mu$ for some $\mu\in(0,1)$.
This special case well motivates the idea of universal gambling and admits a simpler algorithm than the general case.

We consider the gambling problem with odds vectors $\xb_t \gets [\frac{Y_t}{m},\frac{1-Y_t}{1-m}]$, which is a two-horse race game with odds $o_1=\frac{1}{m}$ and $o_2=\frac{1}{1-m}$.
Henceforth, for the two-horse race with odds $o_1$ and $o_2$, for each $\th\in[0,1]$, we let 
\begin{align*}
\wealth_t^\th(c^t;o_1,o_2)
&\defeq \wealth_0\prod_{i=1}^t (o_1\th)^{c_i}(o_2(1-\th))^{1-c_i}\\
&=\wealth_0(o_1\th)^{\sum_{i=1}^t c_i} (o_2(1-\th))^{t-\sum_{i=1}^t c_i}
\end{align*}
denote the cumulative wealth of the constant bettor $\th$ after round $t$ with respect to $c^t\in\{0,1\}^t$.

\subsection{Motivation}
To illustrate the idea of \emph{universal gambling}, let us first consider the wealth achieved by the best constant bettor \emph{in hindsight}:
\begin{align*}
\max_{\th\in[0,1]} \frac{\wealth_t^\th(c^t;o_1,o_2)}{\wealth_0}
&= o_1^{s_t} o_2^{t-s_t} e^{-th(\frac{s_t}{t})},
\numberthis
\label{eq:best_constant_bettor_wealth}
\end{align*}
where $h(p)\defeq p\log\frac{1}{p}+(1-p)\log\frac{1}{1-p}$ denotes the binary entropy function and $s_t \defeq \sum_{i=1}^t c_i$.
Assume for now that this wealth \scndrevision{were hypothetically achieved} by a causal betting strategy and examine what we can obtain as \scndrevision{a resulting confidence sequence}.
If we plug-in $c_t\gets Y_t$ and $o_1\gets \frac{1}{\mu}$ and $o_2\gets \frac{1}{1-\mu}$, then since the wealth process is a martingale, by Ville's inequality, we have, for any $\d\in(0,1)$,
\begin{align*}
&\P\Bigl\{\sup_{t\ge1}
\mu^{-S_t} (1-\mu)^{-(t-S_t)} e^{-th(\frac{S_t}{t})}
\ge \frac{1}{\d}\Bigr\}\\
&= \P\Bigl\{\sup_{t\ge1}
t d\Bigl(\frac{S_t}{t}~\Big\|~\mu\Bigr)
\ge \log\frac{1}{\d}\Bigr\}
\le \d.
\numberthis\label{eq:best_constant_bettor}
\end{align*}
Here, $S_t\defeq \sum_{i=1}^t Y_i$ and $d(p~\|~q)\defeq p\log\frac{p}{q}+(1-p)\log\frac{1-p}{1-q}$ denotes the binary relative entropy function, \titrevision{and the equality follows from the identity that
\[
d\Bigl(\frac{S_t}{t}~\Big\|~\mu\Bigr)
= -\frac{S_t}{t}\log\mu - \Bigl(1-\frac{S_t}{t}\Bigr)\log(1-\mu) -h\Bigl(\frac{S_t}{t}\Bigr).
\]}%
Since $m\mapsto d(x~\|~m)$ is convex for any $x\in[0,1]$, \eqnref{eq:best_constant_bettor} yields a time-uniform confidence interval.
Further, since $d(p~\|~q)\ge 2(p-q)^2$ by Pinsker's inequality, \eqnref{eq:best_constant_bettor} readily implies
\[
\P\Bigl\{
\exists t\ge 1\suchthat  
\mu \notin 
\Bigl(\frac{S_t}{t} - \sqrt{\frac{1}{2t}\log\frac{1}{\d}},
\frac{S_t}{t} + \sqrt{\frac{1}{2t}\log\frac{1}{\d}}\Bigr)
\Bigr\}
\le \d.
\numberthis\label{eq:hypothetic_confidence_sequence}
\]
Note that this is the time-uniform \scndrevision{version} of the confidence interval implied by Hoeffding's inequality, which states that, for any $\d>0$,
\[
\P\Bigl\{
\mu\notin \Bigl(\frac{S_t}{t}-\sqrt{\frac{1}{2t}\log\frac{2}{\d}},
\frac{S_t}{t}+\sqrt{\frac{1}{2t}\log\frac{2}{\d}}\Bigr)\Bigr\} \le \d
\numberthis\label{eq:hoeffidng}
\]
for any fixed $t\ge 1$.
\titrevision{In other words, achieving the best wealth in hindsight in~\eqnref{eq:best_constant_bettor_wealth} would result in the time-uniform Hoeffding in~\eqnref{eq:hypothetic_confidence_sequence}.}
Therefore, it is reasonable to aim to achieve the best wealth in hindsight in~\eqnref{eq:best_constant_bettor_wealth}, which is the very idea of universal gambling.
\scndrevision{We note in passing that the time-uniform Hoeffding in \eqnref{eq:hypothetic_confidence_sequence} cannot be constructed in reality, as it violates the law of iterated logarithm (LIL)~\citep{Howard--Ramdas--McAuliffe--Sekhon2021}. We refer interested readers to Remark~\ref{rem:asymp_behavior} for further comments on LIL. 
From an online learning perspective, the best wealth in hindsight can only be achieved at the cost of additional regret by a causal betting strategy, and thus the resulting confidence sequence will tend to behave the time-uniform Hoeffding in the long run with an additional slackness due to the regret.}

\subsection{A Universal Gambling Method via the \scndrevision{Method of Mixtures}}
We now show that the \scndrevision{method of mixtures} almost achieves \eqnref{eq:best_constant_bettor_wealth} and essentially implies \eqnref{eq:hypothetic_confidence_sequence}.
We first consider the following wealth process.

\begin{theorem}
\label{thm:wealth_lower_bound_asymmetric_coin_betting}
There exists a causal betting strategy \titrevision{such that for any $o_1,o_2$, the wealth is given as }
$(\wealth_0 \tilde{\phi}_t(\sum_{i=1}^t c_i;o_1,o_2))_{t\ge 0}$,
where we define
\[
\tilde{\phi}_t(x;o_1,o_2)
\defeq o_1^x o_2^{t-x}\frac{B(x+\half,t-x+\half)}{B(\half,\half)}
\numberthis
\label{eq:defn_asymmetric_potential}
\]
for $x\in[0,t]$ and $B(x,x')\defeq\frac{\Gamma(x)\Gamma(x')}{\Gamma(x+x')}$ denotes the beta function.
\end{theorem}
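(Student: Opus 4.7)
The plan is to realize the claimed wealth as a Jeffreys mixture over the family of constant bettors and then invoke Corollary~\ref{cor:mixture_is_achievable}. For each $\theta \in [0,1]$, the constant bettor $\bb_t^\theta \equiv [\theta, 1-\theta]$ in the two-horse race with odds $o_1, o_2$ has wealth equal to
\[
\wealth_t^\theta(c^t; o_1, o_2) = \wealth_0 (o_1\theta)^{S_t}(o_2(1-\theta))^{t-S_t},
\qquad S_t \defeq \sum_{i=1}^t c_i,
\]
which is trivially achievable by the causal strategy that bets the same $\theta$-fraction every round and, as a product of linear multipliers, saturates conditions (A1)--(A2) of Theorem~\ref{thm:equivalent_condition_achievable_wealth} with equality. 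The strategy I would then analyze is the mixture of these wealth processes against the Jeffreys prior on $[0,1]$, namely $\pi(\diff\theta) = B(\tfrac{1}{2},\tfrac{1}{2})^{-1}\theta^{-1/2}(1-\theta)^{-1/2}\diff\theta$.

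Computing the mixture reduces to a single beta-integral evaluation. Factoring the data-independent power $o_1^{S_t} o_2^{t-S_t}$ out of the integral gives
\[
\int_0^1 \wealth_t^\theta(c^t;o_1,o_2)\,\pi(\diff\theta) = \wealth_0\, o_1^{S_t} o_2^{t-S_t} \cdot \frac{1}{B(\tfrac{1}{2},\tfrac{1}{2})}\int_0^1 \theta^{S_t-1/2}(1-\theta)^{t-S_t-1/2}\diff\theta,
\]
and by the defining integral of the Beta function the remaining integral equals $B(S_t+\tfrac{1}{2}, t-S_t+\tfrac{1}{2})$, yielding exactly $\wealth_0\,\tilde{\phi}_t(S_t; o_1, o_2)$. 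Because conditions (A1)--(A2) hold with equality for every $\wealth_t^\theta$ and this equality is preserved by mixing against $\pi$ by linearity of expectation, the forward direction of Theorem~\ref{thm:equivalent_condition_achievable_wealth} produces a causal betting strategy whose wealth is at least this mixture wealth, while the converse direction forces equality. If one wants the induced bets in closed form, the ratio $\wealth_{t+1}/\wealth_t$ recovers the Krichevsky--Trofimov add-$\tfrac{1}{2}$ rule $b_{t+1} = (S_t + \tfrac{1}{2})/(t+1)$, independent of $o_1, o_2$.

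I do not anticipate a substantive obstacle, since this is the classical KT/Jeffreys mixture construction adapted to arbitrary two-horse odds. The only point that merits explicit mention is that a single data-independent prior on $\theta$ produces the claimed wealth formula uniformly in $(o_1, o_2)$; this holds because the odds enter each constant-bettor wealth only through the multiplicative factor $o_1^{S_t} o_2^{t-S_t}$, which pulls cleanly outside the mixture integral and so does not interact with the choice of $\pi$.
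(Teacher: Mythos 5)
Your proposal is correct and follows essentially the same route as the paper: both realize $\tilde{\phi}_t$ as the $\mathrm{Beta}(\tfrac{1}{2},\tfrac{1}{2})$-mixture of constant-bettor wealths, evaluate the mixture via the defining integral of the Beta function after factoring out $o_1^{S_t}o_2^{t-S_t}$, and invoke Corollary~\ref{cor:mixture_is_achievable} for achievability. Your closing remark that the induced bets recover the odds-independent Krichevsky--Trofimov rule matches the paper's follow-up remark and Appendix~\ref{app:kt_strategy}.
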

\begin{proof}
Consider a constant bettor $[b,1-b]\in\Delta_2$ parameterized $b\in[0,1]$.
If we define 
$\phi_t^b(x;o_1,o_2)\defeq (o_1 b)^{x}  (o_2(1-b))^{t-x}$ for $x\in[0,t]$,
we can write the wealth for a constant bettor as
\begin{align*}
\wealth_t^b(c^t;o_1,o_2)
&=\wealth_{0} \prod_{i=1}^t (o_1c_i b + o_2(1-c_i)(1-b))\\
&= \wealth_{0} \phi_t^b\Bigl(\sum_{i=1}^tc_i; o_1,o_2\Bigr).
\end{align*}

We now take a mixture of the wealth processes over the constant bettor $b\in[0,1]$. 
We specifically choose the Beta distribution $\pi(b)=B(b|\half,\half)\propto b^{\half}(1-b)^{\half}$ over $b\in[0,1]$ with the parameters $(\half,\half)$. 
By Corollary~\ref{cor:mixture_is_achievable}, the mixture wealth $\int \wealth_t^b(c^t;o_1,o_2)\diff\pi(b)=\wealth_0\tilde{\phi}_t(\sum_{i=1}^t c_i;o_1,o_2)$ is achievable. \titrevision{Note that the equality is a consequence of the identity 
\[
\int \phi_t^b(x;o_1,o_2)\diff\pi(b) = \tilde{\phi}_t(x;o_1,o_2) \quad\text{for any $x\in[0,t]$},
\]
which, in turn, follows from the definition of the Beta function.}
This concludes the proof.
\end{proof}

\begin{remark}
As pointed out earlier, it is not required to explicitly find a sequential betting strategy that achieves the wealth process to construct a confidence sequence.
We remark, however, that the so-called Krichevsky--Trofimov (KT) strategy~\citep{Krichevsky--Trofimov1981} defined as $b_t(c^{t-1})=(\sum_{i=1}^{t-1}c_i+\half)/(t+1)$ achieves \titrevision{the wealth process defined in Theorem~\ref{thm:wealth_lower_bound_asymmetric_coin_betting}}, for \emph{any} $o_1,o_2>0$. 
{It can be readily verified based on the argument of Theorem~\ref{thm:equivalent_condition_achievable_wealth}, but we explicitly explain the special case
in Appendix~\ref{app:kt_strategy} for completeness.}
\end{remark}

By Ville's inequality, the wealth process above can be transformed into a confidence sequence as follows.
\titrevision{A function $f\suchthat\Xc\to\Real$ for a convex set $\Xc\subseteq \Real^D$ is said to be \emph{log-convex} if $x\mapsto\log f(x)$ is convex. Note that any log-convex function is always quasi-convex.}
\begin{theorem}
\label{thm:hr_interval}
Let $(Y_t)_{t=1}^{\infty}$ be a $\{0,1\}$-valued sequence such that $\E[Y_t|Y^{t-1}]\equiv\mu$ for some $\mu\in(0,1)$.
\begin{enumerate}[label=(\alph*)]
\item For any $\d\in(0,1]$, we have
\[
\P\Bigl\{\sup_{t\ge1} \tilde{\phi}_t\Bigl(\sum_{i=1}^t Y_i; \frac{1}{\mu},\frac{1}{1-\mu}\Bigr) 
\ge \frac{1}{\d}\Bigr\}
\le \d.
\]
\item The function $m\mapsto \tilde{\phi}_t(x; \frac{1}{m},\frac{1}{1-m})$ is log-convex and thus the set 
\[
\Cc_t^\pi(y^t)\defeq \Bigl\{m\in[0,1]\suchthat \sup_{1\le i\le t} \tilde{\phi}_i\Bigl(\sum_{j=1}^i y_j; \frac{1}{m},\frac{1}{1-m}\Bigr)
< \frac{1}{\d}\Bigr\}
\]
is an interval $(\mu_t^{\low}(y^t;\d),\mu_t^{\up}(y^t;\d))\subseteq [0,1]$.
Therefore, equivalently, for any $\d\in(0,1)$, we have
\[
\P\{\exists t\ge 1\suchthat  \mu \notin (\mu_t^{\low}(Y^t;\d), \mu_t^{\up}(Y^t;\d))\}
\le \d.
\numberthis\label{eq:hr_interval}
\]
\end{enumerate}
\end{theorem}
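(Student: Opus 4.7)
The plan is to assemble the two parts from three ingredients already in the excerpt: Theorem~\ref{thm:wealth_lower_bound_asymmetric_coin_betting} (an achievable wealth process for the two-horse race), Corollary~\ref{cor:prop:wealth_is_martingale} (fairness of the odds vector implies the wealth is a martingale), and Ville's inequality (Theorem~\ref{thm:ville}).

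For part (a), I would instantiate Theorem~\ref{thm:wealth_lower_bound_asymmetric_coin_betting} with $\wealth_0=1$, $o_1\gets 1/\mu$, $o_2\gets 1/(1-\mu)$, and $c_i\gets Y_i$. The hypothesis $\E[Y_t\mid Y^{t-1}]\equiv\mu$ forces the odds vector $\xv_t=[Y_t/\mu,(1-Y_t)/(1-\mu)]$ to be fair, so Corollary~\ref{cor:prop:wealth_is_martingale} guarantees that the resulting wealth process $\tilde\phi_t(\sum_{i=1}^t Y_i;1/\mu,1/(1-\mu))$ is a nonnegative martingale starting at $1$. Applying Ville's inequality with threshold $1/\delta$ yields part (a) directly.

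For part (b), log-convexity in $m$ follows from the decomposition
\[
\log\tilde\phi_t\Bigl(x;\frac{1}{m},\frac{1}{1-m}\Bigr)
=-x\log m-(t-x)\log(1-m)+\log\frac{B(x+\tfrac12,t-x+\tfrac12)}{B(\tfrac12,\tfrac12)},
\]
since the last term is constant in $m$, while $-\log m$ and $-\log(1-m)$ are both convex on $(0,1)$ and are weighted by the nonnegative coefficients $x,t-x\in[0,t]$. Because $\log$ commutes with $\sup$, the pointwise maximum $m\mapsto\sup_{1\le i\le t}\tilde\phi_i(\sum_{j=1}^i y_j;1/m,1/(1-m))$ is again log-convex, hence quasi-convex on $[0,1]$ (with the convention $+\infty$ at the endpoints, when applicable). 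Its strict sublevel set $\Cc_t^\pi(y^t)$ is therefore a (possibly empty) interval, whose endpoints define $\mu_t^{\low}(y^t;\delta)$ and $\mu_t^{\up}(y^t;\delta)$.

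The probabilistic statement~\eqref{eq:hr_interval} is then the contrapositive of ``$\mu\in\Cc_t^\pi(Y^t)$ for every $t\ge 1$'': the failure event $\{\exists\,t\ge 1:\mu\notin(\mu_t^{\low},\mu_t^{\up})\}$ coincides exactly with $\{\sup_{t\ge 1}\tilde\phi_t(\sum_{i=1}^t Y_i;1/\mu,1/(1-\mu))\ge 1/\delta\}$, to which part (a) applies verbatim. I do not foresee a real obstacle here; the only points requiring a little care are verifying that taking the supremum preserves log-convexity (equivalently, quasi-convexity) and matching strict versus non-strict inequalities when going between the set-theoretic and probabilistic formulations.
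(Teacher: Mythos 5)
Your proposal is correct and follows essentially the same route as the paper: fairness of the odds vector plus Theorem~\ref{thm:wealth_lower_bound_asymmetric_coin_betting} and Ville's inequality for part (a), and log-convexity of $m\mapsto m^{-x}(1-m)^{-(t-x)}$ (the paper's Lemma~\ref{lem:log_convexity}) for part (b). Your explicit observation that the supremum over $1\le i\le t$ preserves log-convexity, so the strict sublevel set is an interval, is a slightly more careful phrasing of the paper's ``two distinct real roots'' argument, but the substance is identical.
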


\begin{proof}
Let $S_t\defeq \sum_{i=1}^t Y_i$.
Set $\xb_t \gets [\frac{Y_t}{\mu},\frac{1-Y_t}{1-\mu}]$ in the two-horse race, so that $(\xb_t)_{t=1}^{\infty}$ is fair.
Hence, by Proposition~\ref{prop:wealth_is_martingale}, any wealth process out of this game is a martingale.
Since there exists a strategy that guarantees $\wealth_t(Y^t;\frac{1}{\mu},\frac{1}{1-\mu})= \wealth_0 \tilde{\phi}_t(S_t;\frac{1}{\mu},\frac{1}{1-\mu})$ by Theorem~\ref{thm:wealth_lower_bound_asymmetric_coin_betting}, we have
\begin{align*}
&\P\Bigl\{\sup_{t\ge1} \tilde{\phi}_t\Bigl(S_t; \frac{1}{\mu},\frac{1}{1-\mu}\Bigr) \ge \frac{1}{\d}\Bigr\}\\
&= \P\Bigl\{\sup_{t\ge1} \frac{\wealth_t(Y^t;\frac{1}{\mu},\frac{1}{1-\mu})}{\wealth_0} \ge \frac{1}{\d}\Bigr\}
\le \d,
\end{align*}
where the inequality follows from Ville's inequality (Theorem~\ref{thm:ville}).
Further, since $m\mapsto \log \tilde{\phi}_t(x; \frac{1}{m},\frac{1}{1-m})$ is convex by Lemma~\ref{lem:log_convexity}, the equation $\tilde{\phi}_t(x; \frac{1}{m},\frac{1}{1-m})=\frac{1}{\d}$ in terms of $m$ always has two distinct real roots for any $\d\in(0,1)$, and thus the final inequality readily follows.
\end{proof}
\titrevision{We remark that the confidence sequence in this theorem is a special case of the more general confidence sequence derived by universal portfolio in the next section, for $\{0,1\}$-valued sequences; see Remark~\ref{rem:discrete}.}
In practice, $\mu_t^{\low}(x;\d),\mu_t^{\up}(x;\d)$ are the roots of the equation $\tilde{\phi}_t(S_t; \frac{1}{m},\frac{1}{1-m}) 
= \frac{1}{\d}$ over $m\in[0,1]$, and thus can be numerically computed by a 1D root finding algorithm such as the Newton--Raphson iteration; see, \eg \citep[Sec.~8.1]{Solomon2015}. We note that, when the number of observations $t$ is small, it could be that there may exist no root or only one root in $(0,1)$, as shown in the synthetic examples in Fig.~\ref{fig:evolution}.

By Pinsker's inequality, we can find a simple outer bound on the resulting confidence sequence in~\eqnref{eq:hr_interval}.
\titrevision{The proof of the following corollary can be found in Appendix~\ref{app:proofs}.}
\begin{corollary}
\label{cor:hr_interval}
Let $(Y_t)_{t=1}^{\infty}$ be a $\{0,1\}$-valued sequence such that $\E[Y_t|Y^{t-1}]\equiv\mu$ for some $\mu\in(0,1)$.
Then, for any $\d\in(0,1]$, we have
\[
\P\Bigl\{
\exists t\ge 1\suchthat  
\mu \notin 
\Bigl(\frac{S_t}{t} - \sqrt{\frac{g_t(S_t;\d)}{2}},
\frac{S_t}{t} + \sqrt{\frac{g_t(S_t;\d)}{2}} \Bigr)
\Bigr\}
\le \d,
\]
where we define $S_t\defeq \sum_{i=1}^t Y_i$,
\begin{align*}
\qtkt_t(x)&\defeq \frac{B(x+\half,t-x+\half)}{B(\half,\half)},\quad\text{and}
\numberthis\label{eq:qtkt}\\
g_t(x;\d)&\defeq \frac{1}{t}
\log\frac{1}{\d} + \frac{1}{t}\log \frac{e^{-th(\frac{x}{t})}}{\qtkt_t(x)}.
\end{align*}
\end{corollary}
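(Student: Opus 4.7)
The plan is to translate the wealth bound in Theorem~\ref{thm:hr_interval}(a) into a bound on the binary relative entropy $d(S_t/t\,\|\,\mu)$ by direct algebra, and then weaken it to the claimed squared-deviation bound using Pinsker's inequality.

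First, I would expand
\[
\tilde{\phi}_t\Bigl(S_t;\tfrac{1}{\mu},\tfrac{1}{1-\mu}\Bigr) = \mu^{-S_t}(1-\mu)^{-(t-S_t)}\qtkt_t(S_t),
\]
take logarithms, and divide by $t$. Applying the identity $-p\log\mu - (1-p)\log(1-\mu) = d(p\,\|\,\mu) + h(p)$ with $p = S_t/t$, I would obtain
\[
\frac{1}{t}\log \tilde{\phi}_t\Bigl(S_t;\tfrac{1}{\mu},\tfrac{1}{1-\mu}\Bigr)
= d\Bigl(\tfrac{S_t}{t}~\Big\|~\mu\Bigr)
- \frac{1}{t}\log\frac{e^{-th(S_t/t)}}{\qtkt_t(S_t)}.
\]
Consequently the event $\{\tilde{\phi}_t(S_t;\frac{1}{\mu},\frac{1}{1-\mu}) \ge 1/\d\}$ is equivalent to $\{d(S_t/t\,\|\,\mu) \ge g_t(S_t;\d)\}$, and Theorem~\ref{thm:hr_interval}(a) reads
\[
\P\Bigl\{\exists t\ge 1\suchthat d\Bigl(\tfrac{S_t}{t}~\Big\|~\mu\Bigr) \ge g_t(S_t;\d)\Bigr\} \le \d.
\]

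Next, I would invoke Pinsker's inequality $d(p\,\|\,q) \ge 2(p-q)^2$: whenever $|\mu - S_t/t| \ge \sqrt{g_t(S_t;\d)/2}$ at some $t$, one has $d(S_t/t\,\|\,\mu) \ge 2(S_t/t - \mu)^2 \ge g_t(S_t;\d)$. Hence the event of escape from the claimed interval at some $t$ is contained in the event in the previous display, and the desired probability bound follows by monotonicity of probability.

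The main obstacle is essentially none: this is a short two-step derivation relying entirely on Theorem~\ref{thm:hr_interval}(a) and a textbook inequality. The only care needed is the bookkeeping of the binary-entropy term when passing from the log-wealth to the KL form. As a sanity check, the mixture wealth cannot exceed the best-constant-bettor wealth from~\eqref{eq:best_constant_bettor_wealth}, so $\qtkt_t(S_t) \le e^{-th(S_t/t)}$, ensuring that $g_t(S_t;\d) \ge \frac{1}{t}\log\frac{1}{\d} \ge 0$ whenever $\d\le 1$; should the radius $\sqrt{g_t(S_t;\d)/2}$ exceed $1$, the inclusion statement is trivial.
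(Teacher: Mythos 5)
Your proof is correct and follows essentially the same route as the paper's: both rewrite the wealth condition $\tilde{\phi}_t(S_t;\frac{1}{\mu},\frac{1}{1-\mu})\ge\frac{1}{\d}$ as $d(\frac{S_t}{t}\,\|\,\mu)\ge g_t(S_t;\d)$ and then apply Pinsker's inequality to pass to the squared-deviation interval, invoking Theorem~\ref{thm:hr_interval}. Your added sanity check that $g_t(S_t;\d)\ge 0$ (via $\qtkt_t(S_t)\le e^{-th(S_t/t)}$) is a nice touch the paper leaves implicit.
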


\begin{remark}[An asymptotic behavior]
\label{rem:asymp_behavior}
\titrevision{Using this simplified outer bound, we can argue that the confidence sequence from universal gambling closely emulates the hypothetical time-uniform Hoeffiding in~\eqnref{eq:hypothetic_confidence_sequence}.
For $t$ sufficiently large,} the size of the interval in Corollary~\ref{cor:hr_interval} behaves as
\[
\sqrt{2g_t(S_t;\d)} = \sqrt{\frac{2}{t}\log\frac{1}{\d}
+ \frac{1}{t}\log t + o(1)},
\]
since $\frac{1}{t}\log \frac{1}{\qtkt_t(x)}= h(\frac{x}{t})+\frac{1}{2t}\log  t + o(1)$ by Stirling's approximation, provided that \titrevision{$S_t=\Th(t)$ and $t-S_t=\Th(t)$}.\footnote{By Stirling's approximation, $\log B(x,y)\approx (x-\half)\log x + (y-\half)\log y - (x+y-\half)\log(x+y) + \half\log(2\pi)$ for $x$ and $y$ sufficiently large.}
Compared to \eqnref{eq:hypothetic_confidence_sequence}, it suffers an additional term $O(\sqrt{(\log t)/t})$ in the width of the confidence sequence.
\titrevision{In words, this shows that the universal gambling can indeed implement a time-uniform Hoeffding as expected, with the cost of additional width which vanishes in time.}
We remark in passing that the law of iterated logarithm (LIL)~\citep{Howard--Ramdas--McAuliffe--Sekhon2021} implies that the optimal additional term is in the order of $O(\sqrt{(\log\log t)/t})$, as also commented in \citep{Waudby-Smith--Ramdas2020b}. 
\citet[Appendix~C.2]{Waudby-Smith--Ramdas2020b} proposed a betting strategy based on a ``predictable mixture'' with the so-called stitching technique~\citep{Howard--Ramdas--McAuliffe--Sekhon2021} and analyzed that it achieves the optimal order $O(\sqrt{(\log\log t)/t})$.
\titrevision{Applying a similar idea of \citep{Jun--Orabona2019}, \citet{Orabona--Jun2021} also constructed another mixture portfolio based on a prior inspired by \citep{Robbins1970}, and showed that it achieves the LIL based on a regret analysis for $[0,1]$-valued processes. 
}
\end{remark}

\section{Continuous Two-Horse Race \texorpdfstring{\\}{}and \texorpdfstring{$[0,1]$}{[0,1]}-Valued Processes}
\label{sec:up}
We now consider a more general case where the sequence $Y_t$ is continuous-valued, \ie $Y_t\in[0,1]$.
Likewise in the previous section,
we plug-in $\ct_t\gets Y_t$ with $o_1\gets \frac{1}{\mu}$ and $o_2\gets \frac{1}{1-\mu}$ to construct a subfair odds-vector sequence $(\xv_t)_{t=1}^{\infty}$ for a continuous two-horse race.
Following the same notation, we define
\begin{align}
\wealth_t^b(\ct^t;o_1,o_2)
\defeq \wealth_0\prod_{i=1}^t 
(o_1\ct_i b + o_2(1-\ct_i)(1-b))
\label{eq:cum_wealth}
\end{align}
as the cumulative wealth of the constant bettor $b$ after round $t$ with respect to $\ct^t\in[0,1]^t$.

\titrevision{
Note that we have an immediate lower bound
\[
o_1\ct_i b + o_2(1-\ct_i)(1-b) 
\ge (o_1 b)^{\ct_i} (o_2(1-b))^{1-\ct_i}
\numberthis
\label{eq:jensen}
\]
by Jensen's inequality.
Since the lower bound is in the form of the multiplicative wealth $(o_1b)^{\ct_i}(o_2(1-b))^{1-\ct_i}$ of the discrete two-horse race, the same time-uniform confidence intervals constructed in Theorem~\ref{thm:hr_interval} in the previous section is valid as a loose confidence sequence if we simply plug-in $\ct^t$ in place of $Y^t$. Note that though $Y^t\in\{0,1\}^t$ was assumed in Theorem~\ref{thm:hr_interval}, the function defined in the theorem remains well-defined for continuous-valued sequence $Y^t\in[0,1]^t$.}
This lower bound in~\eqnref{eq:jensen} can be viewed as a reduction to the standard two-horse race game, since the lower bound becomes tight if and only if $\ct_i\in\{0,1\}$.
\titrevision{Albeit this results in a very easy-to-implement confidence sequence for continuous-valued sequences,} it turns out that the Jensen gap in \eqnref{eq:jensen} leads to a loose bound.

A more direct approach for this general case is to compute the mixture wealth without invoking Jensen's inequality.
That is, we will study how to compute the mixture wealth, with a slight abuse of notation,
\begin{align*}
\wealth_t^{\pi}(\ct^t;o_1,o_2)
&\defeq \int \wealth_t^b(\ct^t;o_1,o_2)\diff\pi(b)\\
&\ge \wealth_{0} \tilde{\phi}_t\Bigl(\sum_{i=1}^t \ct_i; o_1,o_2\Bigr),
\end{align*}
where the lower bound is the wealth attained by a mixture portfolio in the previous section and the inequality is followed from \eqnref{eq:jensen}.
Here, the equality holds if and only if $\ct^t\in\{0,1\}^t$.
Hereafter, we call $\wealth_t^{\pi}(\ct^t;o_1,o_2)$ the \emph{$\pi$-mixture wealth}, or the wealth of the \emph{$\pi$-mixture portfolio}.
We remark that
\citet{Cover1991}'s universal portfolio (UP) is a $\mathrm{Beta}(b;\a,\b)$-mixture portfolio; note that $\a=\b=1$ or $\a=\b=\half$ are the canonical choices due to the simplicity~\citep{Cover1991} and the minimax optimality~\citep{Cover--Ordentlich1996}, respectively.
The beta distribution is a natural choice for an ease of implementation, since it is a \emph{conjugate prior} of the summand $b^k(1-b)^{t-k}$, which can be understood as the (unnormalized) binomial distribution.

\subsection{A Mixture Portfolio}
We first explore how we can compute a mixture wealth exactly, \ie without any approximation. 
While \citet{Orabona--Jun2021} alluded to this idea of directly implementing Cover's UP to compute a confidence sequence, here we explicitly describe the algorithm in a general form with any $\pi$-mixture for a reference.
We also show that the resulting confidence set of any mixture portfolio is always an interval as desired. (This was left as a loose end in the initial version of \citep{Orabona--Jun2021} (\href{https://arxiv.org/abs/2110.14099v1}{arXiv:2110.14099v1}), but also resolved in its revision (\href{https://arxiv.org/abs/2110.14099v3}{arXiv:2110.14099v3}).)

It turns out that the $\pi$-mixture wealth can be computed in $O(t)$ at round $t$ instead of $O(2^t)$ via a dynamic-programming type recursion, 
as shown in the following theorem. 
This essentially appeared in the discussion on computing the UP by \citet[Section~IV]{Cover--Ordentlich1996}.
Here we present a general statement for any mixture portfolio.
\begin{theorem}
\label{thm:exact_up}
For any prior distribution $\pi(b)$ over $(0,1)$,
the $\pi(b)$-mixture wealth is achievable and can be written as
\begin{align*}
\wealth_t^{\pi}(\ct^t;o_1,o_2)
=\wealth_0 \sum_{k=0}^t 
o_1^ko_2^{t-k}
\psi_t^{\pi}(k)
\ct^t(k),
\numberthis\label{eq:up_wealth_expression}
\end{align*}
where we 
define
\begin{align*}
\psi_t^{\pi}(k) &\defeq \int_0^1 b^k(1-b)^{t-k}\diff \pi(b),\numberthis\label{eq:mixture_wealth_individual}\\
\ct^t(k)&\defeq \sum_{z^t\in\{0,1\}^t\suchthat k(z^t)=k} \prod_{i=1}^t \ct_i^{z_i} (1-\ct_i)^{1-z_i},\numberthis\label{eq:seq_k_statistics}
\end{align*}
and $k(z^t)\defeq \sum_{i=1}^t z_i$.
Furthermore, for each $t\ge 1$, we have
\[\ct^t(k)=
\begin{cases}
(1-\ct_t)\ct^{t-1}(0) & \text{if }k=0,\\
\ct_t \ct^{t-1}(k-1)+(1-\ct_t)\ct^{t-1}(k) & \text{if }1\le k\le t-1\\
\ct_t \ct^{t-1}(t-1) & \text{if }k=t.
\end{cases}
\numberthis
\label{eq:up_recursive_update}
\]
\end{theorem}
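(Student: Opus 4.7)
The plan is to prove Theorem~\ref{thm:exact_up} by a direct algebraic expansion of the mixture wealth, followed by a telescoping/partition argument for the recursion of $\ct^t(k)$. The claim of achievability is immediate from Corollary~\ref{cor:mixture_is_achievable} applied to the family of constant bettors $b\in(0,1)$ with mixing measure $\pi$, so the real content lies in the closed-form expansion~\eqref{eq:up_wealth_expression} and the update rule~\eqref{eq:up_recursive_update}.

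First, I would start from the definition
\[
\wealth_t^{\pi}(\ct^t;o_1,o_2) = \wealth_0\int_0^1 \prod_{i=1}^t\bigl(o_1\ct_i b + o_2(1-\ct_i)(1-b)\bigr) \diff\pi(b),
\]
and expand the product over $t$ factors by indexing each summand by a binary string $z^t\in\{0,1\}^t$, where $z_i=1$ corresponds to picking the $o_1\ct_i b$ term and $z_i=0$ corresponds to picking the $o_2(1-\ct_i)(1-b)$ term. This yields
\[
\prod_{i=1}^t\bigl(o_1\ct_i b + o_2(1-\ct_i)(1-b)\bigr) = \sum_{z^t\in\{0,1\}^t} o_1^{k(z^t)}o_2^{t-k(z^t)}\, b^{k(z^t)}(1-b)^{t-k(z^t)}\prod_{i=1}^t \ct_i^{z_i}(1-\ct_i)^{1-z_i},
\]
where $k(z^t)=\sum_i z_i$. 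Grouping the sum by the value of $k(z^t)=k$, the $b$-dependent factor $b^k(1-b)^{t-k}$ pulls outside the inner sum, and the inner sum over $z^t$ with $k(z^t)=k$ is exactly $\ct^t(k)$ as defined in~\eqref{eq:seq_k_statistics}.

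Second, I would swap the finite sum with the integral over $\pi$, which is justified since all terms are nonnegative and $t$ is finite. The integral $\int_0^1 b^k(1-b)^{t-k}\diff\pi(b)$ is by definition $\psi_t^{\pi}(k)$, giving the target expression
\[
\wealth_t^{\pi}(\ct^t;o_1,o_2)=\wealth_0\sum_{k=0}^t o_1^k o_2^{t-k}\psi_t^{\pi}(k)\ct^t(k).
\]

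Third, for the recursion~\eqref{eq:up_recursive_update}, I would partition the sum defining $\ct^t(k)$ according to the value of the final coordinate $z_t\in\{0,1\}$. Sequences with $z_t=1$ contribute $\ct_t\cdot\ct^{t-1}(k-1)$ and those with $z_t=0$ contribute $(1-\ct_t)\cdot\ct^{t-1}(k)$, yielding the general update; the stated boundary cases $k=0$ and $k=t$ are just the two single-branch versions since $\ct^{t-1}(-1)=\ct^{t-1}(t)=0$. I do not foresee any significant obstacle here: the argument is entirely combinatorial bookkeeping together with Fubini, and the main care is only in matching the boundary cases and checking that the factorization by $z^t$ correctly groups the $b^k(1-b)^{t-k}$ factor before integrating against $\pi$.
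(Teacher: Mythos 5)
Your proposal is correct and follows essentially the same route as the paper's proof: expand the product via the distributive law indexed by $z^t\in\{0,1\}^t$, group by $k(z^t)=k$, integrate against $\pi$, and verify the recursion by splitting on the last coordinate $z_t$. The only difference is that you make the appeal to Corollary~\ref{cor:mixture_is_achievable} for achievability and the boundary cases of the recursion explicit, both of which the paper leaves implicit.
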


\begin{proof}
We first note that we can write the cumulative wealth of any constant bettor $b$ as
\begin{align*}
&\frac{\wealth_t^b(\ct^t;o_1,o_2)}{\wealth_0}\\
&=\sum_{z^t\in\{0,1\}^t} \prod_{i=1}^t (o_1\ct_i b)^{z_i} (o_2(1-\ct_i)(1-b))^{1-z_i},
\numberthis\label{eq:distributive_law}
\end{align*}
where the equality follows \titrevision{by applying the distributive law to \eqnref{eq:cum_wealth}}.
To see \eqnref{eq:up_wealth_expression}, we first note that continuing from \eqnref{eq:distributive_law}, we have
\begin{align*}
\frac{\wealth_t^b(\ct^t;o_1,o_2)}{\wealth_0}
&= \sum_{k=0}^t 
(o_1b)^{k}(o_2(1-b))^{t-k}\times\\
&\quad\sum_{z^t\in\{0,1\}^t\suchthat k(z^t)=k} \prod_{i=1}^t  \ct_i^{z_i}(1-\ct_i)^{1-z_i},
\numberthis\label{eq:wealth_const_betting}
\end{align*}
and thus integrating over $b$ with respect to $\pi(b)$ leads to \eqnref{eq:up_wealth_expression} by the definition of \eqnref{eq:defn_asymmetric_potential}.
The recursive update equation in~\eqnref{eq:up_recursive_update} is easy to verify.
\end{proof}

\subsubsection{\titrevision{Confidence sequence from mixture portfolio}}
The wealth process of any mixture portfolio yields a confidence sequence as follows, which turns out to be always in the form of confidence \emph{intervals} as desired.
\titrevision{In what follows, we define and denote the empirical mean of $Y^t$ as $\hat{\mu}_t\defeq \frac{1}{t}\sum_{i=1}^t Y_i$.}
\begin{theorem}
\label{thm:up_interval}
Let $(Y_t)_{t=1}^{\infty}$ be a $[0,1]$-valued sequence such that $\E[Y_t|Y^{t-1}]\equiv\mu$ for some $\mu\in(0,1)$.
\begin{enumerate}[label={(\alph*)}]
\item For any $\d\in(0,1]$, we have
\[
\P\Bigl\{\sup_{t\ge1} h_t^{\pi}(\mu;Y^t) \ge \frac{1}{\d}\Bigr\}
\le \d,
\]
where we define
\[h_t^{\pi}(m;y^t)
\defeq \frac{\wealth_t^\pi(y^t;\frac{1}{m},\frac{1}{1-m})}{\wealth_0}
= \sum_{k=0}^t 
\frac{\psi_t^\pi(k) y^t(k)}{m^k(1-m)^{t-k}}
\]
and $y^t(k)$ is defined analogously to \eqnref{eq:seq_k_statistics}. 
\item The function $m\mapsto h_t^{\pi}(m;y^t)$ is log-convex and thus the set 
\[\tilde{\Cc}_t^{\pi}(y^t;\d)\defeq \Bigl\{m\in[0,1]\suchthat \sup_{1\le i\le t} h_i^{\pi}(m;y^i)< \frac{1}{\d}\Bigr\}\]
is an interval $(\tilde{\mu}_t^{\pi,\low}(y^t;\d),\tilde{\mu}_t^{\pi,\up}(y^t;\d))\subseteq[0,1]$.
Therefore, equivalently, for any $\d\in(0,1)$, we have
\[
\P\{\exists t\ge 1\suchthat  \mu \notin (\tilde{\mu}_t^{\pi,\low}(Y^t;\d), \tilde{\mu}_t^{\pi,\up}(Y^t;\d))\}
\le \d.
\]
\item $\hat{\mu}_t\in \tilde{\Cc}_t^{\pi}(Y^t)$ with probability 1.
\end{enumerate}
\end{theorem}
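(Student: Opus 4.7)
For part (a), I would argue that the $\pi$-mixture wealth $\wealth_i^\pi(Y^i;\tfrac{1}{\mu},\tfrac{1}{1-\mu})$ is a nonnegative martingale and apply Ville's inequality. Under $\E[Y_t\cond Y^{t-1}]\equiv\mu$, the odds vector $\xv_t=[Y_t/\mu,(1-Y_t)/(1-\mu)]$ is fair, so by Corollary~\ref{cor:prop:wealth_is_martingale} every constant-bettor wealth $\wealth_i^b$ is a nonnegative martingale; the $\pi$-mixture $\int\wealth_i^b\,d\pi(b)$ inherits the martingale property by Fubini and linearity of conditional expectation. Theorem~\ref{thm:ville} then yields the tail bound, and the explicit sum representation of $h_t^\pi$ is a direct consequence of Theorem~\ref{thm:exact_up} applied with $o_1=1/m$ and $o_2=1/(1-m)$.

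For part (b), I would establish log-convexity and then invoke quasi-convexity. Each summand of $h_t^\pi(m;y^t)$ is a nonnegative constant times $m^{-k}(1-m)^{-(t-k)}$, whose logarithm $-k\log m-(t-k)\log(1-m)+\text{const}$ is convex on $(0,1)$; hence each summand is log-convex. Sums of log-convex functions are log-convex (a standard consequence of H\"older's inequality), so $m\mapsto h_t^\pi(m;y^t)$ is log-convex. Suprema preserve log-convexity, since $\log\sup_i h_i^\pi=\sup_i\log h_i^\pi$ and the supremum of convex functions is convex. Log-convex functions are quasi-convex, so $\tilde{\Cc}_t^\pi(y^t;\delta)=\{m\in[0,1]:\sup_{1\le i\le t}h_i^\pi(m;y^i)<1/\delta\}$ is a convex subset of $\Real$, hence an open interval $(\tilde\mu_t^{\pi,\low},\tilde\mu_t^{\pi,\up})$.

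For part (c), the key tool is the AM-GM inequality. For any constant bettor $b\in(0,1)$,
\[
W_t(b,m;Y^t)=\prod_{i=1}^t\Bigl(\frac{Y_i b}{m}+\frac{(1-Y_i)(1-b)}{1-m}\Bigr)\le\Bigl(\frac{\hat\mu_t b}{m}+\frac{(1-\hat\mu_t)(1-b)}{1-m}\Bigr)^t,
\]
and at $m=\hat\mu_t$ the right-hand side collapses to $(b+(1-b))^t=1$. Integrating against $\pi$ gives $h_t^\pi(\hat\mu_t;Y^t)\le 1<1/\delta$ for any $\delta<1$, placing $\hat\mu_t$ in the time-$t$ single-time sublevel set. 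This is the step I expect to be the main obstacle: to pass from this single-time bound to the $\sup$-based definition of $\tilde{\Cc}_t^\pi$, one also needs $h_i^\pi(\hat\mu_t;Y^i)<1/\delta$ at every intermediate $i<t$, where the analogous AM-GM estimate only yields $((\hat\mu_i/\hat\mu_t)b+((1-\hat\mu_i)/(1-\hat\mu_t))(1-b))^i$, which need not be bounded by~$1$ once $\hat\mu_i\ne\hat\mu_t$. I would try to close this gap by combining the log-convexity of $h_i^\pi(\cdot;Y^i)$ established in part~(b) with the time-$i$ bound $h_i^\pi(\hat\mu_i;Y^i)\le 1$ to argue that each single-time interval still contains $\hat\mu_t$, so that their intersection does as well.
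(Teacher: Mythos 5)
Your treatments of parts (a) and (b) are correct and essentially the paper's own: the paper routes the martingale property of the mixture wealth through achievability (Theorem~\ref{thm:exact_up} together with Proposition~\ref{prop:wealth_is_martingale}) rather than through Fubini directly, but the substance is the same, and part (b) is verbatim the paper's argument via Lemmas~\ref{lem:log_convexity} and~\ref{lem:sum_preserves_log_convexity}. For part (c), your AM--GM computation is a genuinely different and arguably cleaner proof of the paper's key Lemma~\ref{lem:small_wealth}: the paper instead shows that $b\mapsto \wealth_t^b(y^t;\frac{1}{\hat{\mu}_t},\frac{1}{1-\hat{\mu}_t})$ is log-concave, verifies via a combinatorial identity (Lemma~\ref{lem:identity}) that its derivative vanishes at $b=\hat{\mu}_t$, and evaluates the maximum to be $\wealth_0$. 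Both routes give the same deterministic single-time bound $h_t^\pi(\hat{\mu}_t;y^t)\le 1$.

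The gap you flag in passing from the single-time bound to the $\sup$-based definition of $\tilde{\Cc}_t^{\pi}$ is real, and your proposed repair does not close it. Knowing $h_i^\pi(\hat{\mu}_i;Y^i)\le 1$ together with log-convexity of $h_i^\pi(\cdot\,;Y^i)$ says nothing about the value at the \emph{different} point $\hat{\mu}_t$; indeed $h_i^\pi(\hat{\mu}_t;Y^i)$ can be astronomically large when $\hat{\mu}_i$ is far from $\hat{\mu}_t$. Concretely, for an \iid $\Bern(1/2)$ process, on the positive-probability event $Y_1=\cdots=Y_{100}=0$, $Y_{101}=\cdots=Y_{200}=1$ one has $\hat{\mu}_{200}=\tfrac12$ but $h_{100}^\pi(\tfrac12;Y^{100})=\int_0^1 (2(1-b))^{100}\,\diff\pi(b)=2^{100}/101$ for the uniform prior, so $\hat{\mu}_{200}\notin\tilde{\Cc}_{200}^{\pi}(Y^{200};\d)$ for any reasonable $\d$. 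So part (c), read literally with the running-intersection set of part (b), cannot be proved; you should not expect any argument to close this gap. You are in good company: the paper's own proof makes exactly the jump you identified, deducing the conclusion solely from $h_t^\pi(\hat{\mu}_t;y^t)\le 1\le \frac{1}{\d}$, which only places $\hat{\mu}_t$ in the time-$t$ sublevel set $\{m : h_t^\pi(m;y^t)<\frac{1}{\d}\}$. The practical resolution is that this single-time statement is all that is needed downstream: in Corollary~\ref{cor:lbup_interval} one can take the interval $I_t\defeq\{m : h_t^\pi(m;y^t)<\frac{1}{\d}\}$, which contains $\hat{\mu}_t$ by your AM--GM bound, contains $\tilde{\Cc}_t^{\pi}(y^t;\d)$ (hence $\mu$ on the good event), and is contained in the sublevel set of the LBUP lower bound; so the corollary survives with part (c) weakened to its single-time form. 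I would state and prove only that weaker form.
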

\begin{proof}
The proof of part (a) follows the same line of that of Theorem~\ref{thm:hr_interval}, with applying Theorem~\ref{thm:exact_up}. 
For part (b), note that $m\mapsto m^{-k}(1-m)^{-(t-k)}$ is log-convex for each $k=0,\ldots,t$ by Lemma~\ref{lem:log_convexity}, a sum of any log-convex functions is also log-convex (Lemma~\ref{lem:sum_preserves_log_convexity}), and so is the function $m\mapsto h_t^{\pi}(m;y^t)$ as a function of $m$. 
Finally, the fact that $\hat{\mu}_t\in \tilde{\Cc}_t^{\pi}(Y^t)$ almost surely readily follows from that $h_t^\pi(\hat{\mu}_t;y^t)\le 1\le \frac{1}{\d}$ by Lemma~\ref{lem:small_wealth}, stated below. 
\end{proof}

\titrevision{The last property (c) that the resulting confidence set $\Cc_t$ always contains the empirical mean $\hat{\mu}_t$ will be proven useful for a lower-bound approach in the next section.
An interesting lemma used in proving (c) is that no constant bettor can earn any money from the continuous two-horse race with odds $(\frac{1}{\hat{\mu}_t},\frac{1}{1-\hat{\mu}_t})$ with any underlying sequence $\ct^t$, at any round, \emph{deterministically}.
Its proof is deferred to Appendix~\ref{app:proofs}.
\begin{lemma}
\label{lem:small_wealth}
For $\ct^t\in[0,1]^t$, let $\hat{\mu}_t=\frac{1}{t}\sum_{i=1}^t \ct_i$.
For any $b\in[0,1]$, we have
\[
\wealth_t^b\Bigl(\ct^t;\frac{1}{\hat{\mu}_t},\frac{1}{1-\hat{\mu}_t}\Bigr)
\le \wealth_0.
\]
\end{lemma}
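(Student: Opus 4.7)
The plan is to expand the cumulative wealth into a product, take logarithms, and apply Jensen's inequality exploiting the fact that the odds have been chosen to match the empirical mean.

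First I would write out the wealth explicitly using \eqref{eq:cum_wealth}:
\[
\frac{\wealth_t^b(\ct^t;\tfrac{1}{\hat{\mu}_t},\tfrac{1}{1-\hat{\mu}_t})}{\wealth_0}
= \prod_{i=1}^t f(\ct_i), \qquad f(c) \defeq \frac{b\,c}{\hat{\mu}_t} + \frac{(1-b)(1-c)}{1-\hat{\mu}_t}.
\]
The key observation is that $f$ is an affine function of its argument, so $\log f$ is concave on $[0,1]$. Moreover, plugging in $c = \hat{\mu}_t$ yields the telescoping identity
\[
f(\hat{\mu}_t) = \frac{b\,\hat{\mu}_t}{\hat{\mu}_t} + \frac{(1-b)(1-\hat{\mu}_t)}{1-\hat{\mu}_t} = b + (1-b) = 1,
\]
which is precisely the statement that the odds $(1/\hat{\mu}_t,\,1/(1-\hat{\mu}_t))$ are ``fair'' for the empirical distribution.

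Taking logarithms and applying Jensen's inequality to the concave function $\log f$ with the uniform weights $1/t$, I would obtain
\[
\frac{1}{t}\sum_{i=1}^t \log f(\ct_i) \;\le\; \log f\Bigl(\frac{1}{t}\sum_{i=1}^t \ct_i\Bigr) \;=\; \log f(\hat{\mu}_t) \;=\; 0.
\]
Multiplying by $t$ and exponentiating gives $\prod_{i=1}^t f(\ct_i) \le 1$, which is the claimed bound.

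There isn't really a hard step here; the proof is essentially a one-line application of Jensen once the affine structure in $\ct_i$ is identified. The only thing to watch is the degenerate cases $\hat{\mu}_t \in \{0,1\}$ (for which every $\ct_i$ must equal $\hat{\mu}_t$, making each $f(\ct_i) = 1$ trivially, after interpreting the corresponding odds as a point mass); these can be handled separately, or by restricting to $\hat{\mu}_t \in (0,1)$ as is implicitly done elsewhere in the paper when odds vectors of the form $[\ct/m,(1-\ct)/(1-m)]$ are used.
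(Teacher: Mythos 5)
Your proof is correct, and it takes a genuinely different and more elementary route than the paper's. You apply Jensen's inequality over the time index $i$ to the concave function $c\mapsto \log f(c)$ (the logarithm of a nonnegative affine function), using the fact that the odds $(1/\hat{\mu}_t,1/(1-\hat{\mu}_t))$ are fair for the empirical mean, so that $f(\hat{\mu}_t)=1$. The paper instead expands the wealth via the distributive law as $\sum_{k=0}^t b^k(1-b)^{t-k}\,y^t(k)/(\hat{\mu}_t^k(1-\hat{\mu}_t)^{t-k})$, shows that $b\mapsto \wealth_t^b$ is log-concave (Lemmas~\ref{lem:log_convexity} and \ref{lem:sum_preserves_log_convexity}), evaluates it to $1$ at $b=\hat{\mu}_t$, and verifies the first-order condition there using the combinatorial identity $\sum_k k\,y^t(k)=\sum_i y_i$ (Lemma~\ref{lem:identity}). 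Your argument buys brevity: it bypasses the expansion \eqref{eq:seq_k_statistics} and the combinatorial identity entirely, and it makes transparent that the bound is exactly the ``fair odds for the empirical distribution'' phenomenon. The paper's argument buys the extra (unneeded for this lemma) information that $b=\hat{\mu}_t$ is precisely the maximizer of the wealth over constant bettors, and it reuses machinery already set up for Theorem~\ref{thm:up_interval}. Two minor points to tidy in your write-up: if $f(\ct_i)=0$ for some $i$ the product is $0\le 1$ and the logarithm step should be skipped (trivial case), and, as you note, $\hat{\mu}_t\in\{0,1\}$ must be excluded or handled separately since the odds are then undefined; neither affects the substance.
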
}

\subsubsection{\titrevision{Implementation and complexity}}
Similar to Theorem~\ref{thm:hr_interval}, we can apply the Newton--Raphson iteration or the bisect method to numerically compute the confidence intervals. 
At each round $t$, however, it takes $O(Mt)$ time complexity if $M$ is the maximum number of iterations in a numerical root-finding method, since evaluating $h_t^{\pi}(\mu;y^t)$ for a given $\mu$ takes $\Th(t)$.
As a result, to process a sequence of length $T$, it takes a quadratic complexity $O(T^2)$.

As alluded to earlier, \citet{Orabona--Jun2021} studied the wealth process of Cover's UP which is the $\mathrm{Beta}(\half,\half)$-mixture portfolio, and considered a tight lower bound based on a regret analysis of UP.
They first noted that, for any $b\in[0,1]$,
\begin{align*}
\frac{\wealthup_t(\ct^t;\frac{1}{m},\frac{1}{1-m})}{\wealth_t^b(\ct^t)}
&\ge \min_{x\in[0,t]} \frac{\qtkt_t(x)}{\qt_t^b(x)},
\end{align*}
where $\qt_t^b(x)\defeq b^x(1-b)^{t-x}$ and $\qtkt_t(x)$ is defined in \eqnref{eq:qtkt}. 
They then considered the following lower bound
\begin{align*}
&\wealthup_t\Bigl(\ct^t;\frac{1}{m},\frac{1}{1-m}\Bigr)\\
&\ge \max_{b\in[0,1]} \wealth_t^b\Bigl(\ct^t;\frac{1}{m},\frac{1}{1-m}\Bigr)
\min_{x\in[0,t]} \frac{\qtkt_t(x)}{\qt_t^b(x)}\\
&\ge  
\wealth_t^{b^*(\ct^t)}\Bigl(\ct^t;\frac{1}{m},\frac{1}{1-m}\Bigr)
\min_{x\in[0,t]} \frac{\qtkt_t(x)}{\qt_t^{b^*(\ct^t)}(x)},
\numberthis\label{eq:precise_lower_bound}
\end{align*}
where $b^*(\ct^t)\defeq\argmax_{b\in[0,1]} \wealth_t^b(\ct^t)$.
The second term of \eqnref{eq:precise_lower_bound} can be bounded by a closed-form expression of the minimax regret (see \citep[Theorem~5]{Orabona--Jun2021}), and thus, in principle, solving the maximization problem to find $b^*(\ct^t)$ leads to a confidence sequence.
Since, however, the lower bound is not a quasiconvex function as a function of $m$, \citet{Orabona--Jun2021} came up with an algorithm, dubbed as PRECiSE-CO96, which numerically computes the intervals based on the final lower bound in~\eqnref{eq:precise_lower_bound}. 
While this algorithm gives empirically tighter intervals for small-sample regime and comparable performance in a long run compared to the existing methods of \citet{Howard--Ramdas--McAuliffe--Sekhon2021}, the algorithm and analysis are rather complicated by nature. 
\titrevision{Moreover, the PRECiSE-CO96 still suffers the same per-round time complexity $O(t)$ of Cover's UP, since when computing $b^*(\ct^t)$, maximizing $\wealth_t^b(\ct^t)$ necessitates to evaluate the function which has $t$ summands. \titrevision{They also proposed another algorithm PRECiSE-A-CO96 as a relaxation of PRECiSE-CO96, by invoking \scndrevision{a version of an inequality due to \citet{Fan--Grama--Liu2015}} on their final lower bound, which runs in $O(1)$ per round.}
We propose a different approach of constant per-round complexity in the next section.}

\begin{remark}[A special case of discrete processes]
\label{rem:discrete}
\titrevision{For a discrete sequence $\ct^t\in \{0,1\}^t$, it is easy to check that the cumulative wealth in~\eqnref{eq:cum_wealth} simplifies to
\[
\sum_{k=0}^t 
o_1^ko_2^{t-k}
\psi_t^{\pi}(k)
\ct^t(k)
= o_1^{\sum_{i=1}^t \ct_i}o_2^{t-\sum_{i=1}^t \ct_i}
\psi_t^{\pi}\Bigl(\sum_{i=1}^t \ct_i\Bigr).
\]
This implies that the mixture wealth in~\eqnref{eq:cum_wealth} with respect to a discrete sequence is equivalent to that of the standard two-horse race as expected, and thus we can use the simpler two-horse-race-based method for $\{0,1\}$-valued processes without invoking the more general algorithm in Theorem~\ref{thm:exact_up}. 
Since the horse-race-based method takes $O(T)$ complexity for a sequence of length $T$, while the general algorithm may suffer $O(T^2)$ as elaborated below, it is worth treating discrete processes separately as a special case.
We note that this observation was not realized in \citep{Orabona--Jun2021}.}
\end{remark}%

\subsection{\titrevision{A Mixture-of-Lower-Bounds Portfolio}}
In this section, we propose a straightforward and computationally efficient alternative to a mixture portfolio, based on a new tight lower bound to the wealth of a constant bettor.
\titrevision{To motivate our development, two remarks on the computational aspects on Cover's UP are in order. We first note that the choice of the Beta distribution $\mathrm{Beta}(b;\a,\b)$ as a mixture distribution $\pi(b)$ is closely related to the fact that the Beta distribution is the conjugate prior of the (unnormalized) binomial distribution $b^k(1-b)^{t-k}$ (see \eqnref{eq:mixture_wealth_individual}), where the specific choice $\a=\b=\half$ is for its minimax optimality.
Second, we remark that the computational complexity of UP is mainly in computing the term in~\eqnref{eq:seq_k_statistics}, which originates from the alternative wealth expression in~\eqnref{eq:distributive_law} followed by the distributive law applied on \eqnref{eq:cum_wealth}.
Recall that \citet{Orabona--Jun2021} considered a lower bound on the wealth of UP to circumvent the computational complexity.}

\titrevision{Alternatively, we propose to apply a tight lower bound on the cumulative wealth in~\eqnref{eq:cum_wealth} of constant bettors first, and then consider a mixture of the lower bounds. In this way, we can detour applying the distributive law and thus avoid the costly recursive expression in~\eqnref{eq:seq_k_statistics}, resulting in a wealth lower bound that can be computed in constant complexity.
Note that it is still a valid approach for constructing a confidence sequence as PRECiSE is, since 
we can always use a \emph{deterministic} lower bound on the wealth process in \eqnref{eq:apply_ville}, as it only results in a larger (and thus worse) confidence set.}

\titrevision{In the following, we first state a new lower bound to be applied on \eqnref{eq:cum_wealth}. Since a mixture of the lower bounds is considered, we define a new mixture distribution in the form of a conjugate prior, by regarding the form of the lower bounds as an unnormalized exponential family distribution. While the use of conjugate prior is mathematically analogous to the choice of the Beta distribution in Cover's UP, we remark that our choice of prior is purely based on computational considerations, and we do not claim any optimality of this choice, in contrast to the optimal choice of $\mathrm{Beta}(b;\half,\half)$ in Cover's UP.}

\subsubsection{\titrevision{A new lower bound on a logarithmic function}}
We start with the lemma below, which provides a way to approximate a logarithmic function from below with polynomial functions.
This is a generalization of \citep[Lemma~1]{Waudby-Smith--Ramdas2020b}, which is subsumed by the case with $n=1$. \titrevision{The proof can be found in Appendix~\ref{app:proofs}.}
In what follows, for the sake of simple notation, for some $x\in[0,1]$, we let $\bar{x}\defeq 1-x$.

\begin{table*}
\caption{Definition of $f_n(y,b,m)$ in Lemma~\ref{lem:generalized_lower_bound}.}
\centering
\begin{minipage}{\textwidth}
\begin{align}
f_n(y,b,m)\defeq \begin{cases}
\displaystyle
\sum_{k=1}^{2n-1} \frac{1}{k} \Bigl(1-\frac{\bar{b}}{\bar{m}}\Bigr)^k \Bigl\{\Bigl(1-\frac{y}{m}\Bigr)^{2n}
    -\Bigl(1-\frac{y}{m}\Bigr)^k\Bigr\} + \Bigl(1-\frac{y}{m}\Bigr)^{2n}\log \frac{\bar{b}}{\bar{m}} 
    & \text{if $b\in[m,1)$, $y\ge 0$,}\\
\displaystyle
\sum_{k=1}^{2n-1} \frac{1}{k} \Bigl(1-\frac{b}{m}\Bigr)^k \Bigl\{\Bigl(1-\frac{\bar{y}}{\bar{m}}\Bigr)^{2n}
    -\Bigl(1-\frac{\bar{y}}{\bar{m}}\Bigr)^k\Bigr\} + \Bigl(1-\frac{\bar{y}}{\bar{m}}\Bigr)^{2n}\log \frac{b}{m} 
    & \text{if $b\in(0,m]$, $y\le 1$.}
\end{cases}
\label{eq:long_eq1}
\end{align}
\medskip
\hrule
\end{minipage}
\label{tab:long_eq1}
\end{table*}

\begin{table*}
\caption{An alternative expression of Lemma~\ref{lem:generalized_lower_bound} with the definition of the exponential-family distribution in \eqnref{eq:exp_family}.}
\centering
\begin{minipage}{\textwidth}
\begin{align}
b\frac{y}{m} + \bar{b}\frac{\bar{y}}{\bar{m}} 
&\ge \begin{cases}
\displaystyle
\phi_n\Bigl(\frac{\bar{b}}{\bar{m}};\Bigl(\Bigl(1-\frac{y}{m}\Bigr)^{2n} - \Bigl(1-\frac{y}{m}\Bigr)^{k}\Bigr)_{k=1}^{2n-1}, \Bigl(1-\frac{y}{m}\Bigr)^{2n}\Bigr)
    & \text{if $b\in[m,1)$, $y\ge 0$,}\\
\displaystyle
\phi_n\Bigl(\frac{b}{m};\Bigl(\Bigl(1-\frac{\bar{y}}{\bar{m}}\Bigr)^{2n} - \Bigl(1-\frac{\bar{y}}{\bar{m}}\Bigr)^{k}\Bigr)_{k=1}^{2n-1}, \Bigl(1-\frac{\bar{y}}{\bar{m}}\Bigr)^{2n}\Bigr)
    & \text{if $b\in(0,m]$, $y\le 1$.}
\end{cases}
\label{eq:long_eq2}
\end{align}
\medskip
\hrule
\end{minipage}
\label{tab:long_eq2}
\end{table*}

\begin{table*}
\caption{Definition of $\breve{h}_t^{(n)}(m;y^t;\rhob_0^{(1)},\eta_{0}^{(1)},\rhob_0^{(2)},\eta_{0}^{(2)})$ in Theorem~\ref{thm:lbup}.}
\centering
\begin{minipage}{\textwidth}
\begin{align*}
&\breve{h}_t^{(n)}(m;y^t;\rhob_0^{(1)},\eta_{0}^{(1)},\rhob_0^{(2)},\eta_{0}^{(2)})
\defeq \frac{\bar{m} Z_n(\rhob_n(y^t;m)+\rhob_0^{(1)},\eta_{n}(y^t;m)+\eta_0^{(1)}) 
+ m Z_n(\rhob_n(\bar{y}^t;\bar{m})+\rhob_0^{(2)},\eta_{n}(\bar{y}^t;\bar{m})+\eta_0^{(2)})}
{\bar{m} Z_n(\rhob_0^{(1)},\eta_{0}^{(1)}) 
+ m Z_n(\rhob_0^{(2)},\eta_{0}^{(2)})}.
\numberthis\label{eq:lbup_wealth}
\end{align*}
\medskip
\hrule
\end{minipage}
\label{tab:long_eq3}
\end{table*}

\begin{lemma}
\label{lem:generalized_lower_bound}
For any $n\in \Natural$ and $m\in(0,1)$, we have
\begin{align*}
\log\Bigl(b\frac{y}{m} + \bar{b}\frac{\bar{y}}{\bar{m}} \Bigr)
&\ge f_n(y,b,m),
\end{align*}
where $f_n(y,b,m)$ is defined in \eqnref{eq:long_eq1}.
\end{lemma}
\titrevision{We remark that the positive integer $n\ge1$ can be understood as an order of approximation, where a higher order $n$ would result in a tighter lower bound. 
We conjecture that this bound becomes monotonically tighter as $n$ increases, but we do not have a formal proof.
Our experiments empirically show, however, that using the lower bound with larger $n$ leads to monotonically tighter confidence sequences; see Section~\ref{sec:exp:evolution} and Fig.~\ref{fig:evolution} therein.
A trade-off in choosing $n$ is discussed in Remark~\ref{rem:tradeoff} below.}

As alluded to earlier, the form of the \titrevision{(exponentiated)} lower bound leads us to defining an exponential-family distribution $p_n(x;\rhob,\eta)$ over $x\in(0,1)$ with parameters $\rhob\in \Real^{2n-1}$ and $\eta\ge 0$ as
\begin{align*}
p_n(x;\rhob,\eta)&\defeq \frac{\phi_n(x;\rhob,\eta)}{Z_n(\rhob,\eta)},\numberthis\label{eq:exp_family}\\
\log \phi_n(x;\rhob,\eta)
&\defeq\sum_{k=1}^{2n-1} \frac{(1-x)^k}{k} \rho_k + \eta\log x,
\end{align*}
where \[Z_n(\rhob,\eta)\defeq \int_0^1 \phi_n(x;\rhob,\eta)\diff x\] 
is the normalization constant. 
Note that if $\rho_1=\ldots=\rho_{2n-1}=\eta=0$, it becomes a uniform distribution over $[0,1]$.
Further, for $n=1$, if $\rho_1\ge 0$, we can write \[Z_1(\rho_1,\eta)= e^{\rho_1}\rho_1^{-\eta-1}\gamma(\eta+1,\rho_1),
\numberthis\label{eq:normal_cont_special}
\] where $\gamma(s,x)\defeq \int_0^x t^{s-1}e^{-t}\diff t$ for $s>0$ denotes the lower incomplete gamma function.
Moreover, we use the bar notation $\bar{x}\defeq 1-x$ for any $x\in[0,1]$ in what follows.

With these definitions, the lower bound in Lemma~\ref{lem:generalized_lower_bound} can be succinctly rewritten as in \eqnref{eq:long_eq2}.

Since it is easy to check 
\[\phi_n(x;\rhob,\eta) \phi_n(x;\rhob_0,\eta_0)
=\phi_n(x;\rhob+\rhob_0,\eta+\eta_0)\numberthis\label{eq:additive}\] 
by definition, we obtain the following lower bound to the cumulative wealth of a constant bettor.

\begin{lemma}
\label{lem:generalized_lower_bound_product}
For any $n\in\Natural$, $m\in(0,1)$, $b\in[0,1]$, and $y^t\in[0,1]^t$, we have
\begin{align*}
\frac{\wealth_t^b(y^t;\frac{1}{m},\frac{1}{\bar{m}})}{\wealth_0}
&\ge \phi_n\Bigl(\frac{\bar{b}}{\bar{m}};\rhob_n(y^t;m),\eta_{n}(y^t;m)\Bigr)\ones_{(m,1)}(b)\\
&~+ \phi_n\Bigl(\frac{b}{m};
\rhob_n(\bar{y}^t;\bar{m}),\eta_{n}(\bar{y}^t;\bar{m})\Bigr)\ones_{(0,m]}(b),
\end{align*}
where we define 
$\bar{y}^t\defeq (1-y_i)_{i=1}^t$,
\begin{align*}
(\rhob_{n}(y^t;m))_k
&\defeq \sum_{i=1}^t \Bigl\{
\Bigl(1-\frac{y_i}{m}\Bigr)^{2n} - \Bigl(1-\frac{y_i}{m}\Bigr)^{k}\Bigr\}
\end{align*}
for $k=1,\ldots,2n-1$, and
\begin{align*}
\eta_n(y^t;m)
&\defeq \sum_{i=1}^t \Bigl(1-\frac{y_i}{m}\Bigr)^{2n}.
\end{align*}
\end{lemma}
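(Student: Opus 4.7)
The plan is to lower-bound each factor in the product expression for $\wealth_t^b$ pointwise via Lemma~\ref{lem:generalized_lower_bound}, then collect the per-round bounds into a single $\phi_n$ using the multiplicative identity~\eqref{eq:additive}. Concretely, from the definition
\[
\wealth_t^b(y^t;\tfrac{1}{m},\tfrac{1}{\bar m})/\wealth_0 = \prod_{i=1}^t\bigl(b\tfrac{y_i}{m} + \bar b\,\tfrac{\bar y_i}{\bar m}\bigr),
\]
I would split according to whether $b\in(m,1)$ or $b\in(0,m]$; these are precisely the regimes covered by the two branches of Lemma~\ref{lem:generalized_lower_bound}. In the first regime, since $y_i\ge 0$ for every $i$, the first branch applies to each factor and produces the lower bound $\phi_n(\tfrac{\bar b}{\bar m};\rhob^{(i)},\eta^{(i)})$ with $\rho^{(i)}_k=(1-\tfrac{y_i}{m})^{2n}-(1-\tfrac{y_i}{m})^k$ and $\eta^{(i)}=(1-\tfrac{y_i}{m})^{2n}$. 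Taking the product of these $t$ bounds and applying~\eqref{eq:additive} gives $\phi_n(\tfrac{\bar b}{\bar m};\sum_i\rhob^{(i)},\sum_i\eta^{(i)})$, and the two sums are precisely $\rhob_n(y^t;m)$ and $\eta_n(y^t;m)$ by the definitions in the statement, yielding the first term of the claim.

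The second regime $b\in(0,m]$ is handled by the obvious symmetry: the factor $b\tfrac{y}{m}+\bar b\,\tfrac{\bar y}{\bar m}$ is invariant under the simultaneous swap $(b,m,y)\leftrightarrow(\bar b,\bar m,\bar y)$, and the condition $b\in(0,m]$ becomes $\bar b\in[\bar m,1)$. Applying the first-regime argument to the swapped data (now the second branch of Lemma~\ref{lem:generalized_lower_bound} is the relevant one, with $y_i\le 1$ playing the role of the hypothesis) yields the second term with parameters $\rhob_n(\bar y^t;\bar m)$ and $\eta_n(\bar y^t;\bar m)$. At the remaining endpoints $b\in\{0,1\}$, both indicators vanish and the inequality is trivial.

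The argument is essentially bookkeeping — the potential $\phi_n$ was defined precisely as the exponential of the per-round lower bound in Lemma~\ref{lem:generalized_lower_bound}, and the exponential-family/conjugate form is exactly what makes a product over $t$ rounds collapse to a single $\phi_n$. Hence the only point requiring care is to match the per-round parameters with the cumulative quantities $\rhob_n,\eta_n$ and to check that $\tfrac{\bar b}{\bar m},\tfrac{b}{m}\in(0,1]$ on the respective subintervals so that $\phi_n$ is evaluated in its natural domain. There is no genuine obstacle beyond this verification.
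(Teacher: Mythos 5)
Your proposal is correct and follows exactly the route the paper takes (implicitly, just before the lemma's statement): apply the per-round bound of Lemma~\ref{lem:generalized_lower_bound} in its $\phi_n$ form to each factor, multiply across rounds via the identity~\eqref{eq:additive}, and observe that the summed per-round parameters are precisely $\rhob_n(y^t;m)$ and $\eta_n(y^t;m)$ (and their barred counterparts on $(0,m]$). The endpoint remark and the symmetry $(b,m,y)\leftrightarrow(\bar b,\bar m,\bar y)$ for the second regime are the same bookkeeping the paper relies on.
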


Note that $\rhob_{n}(y^t;m)$ and $\eta_{n}(y^t;m)$ can be updated \emph{sequentially} without storing the entire history $y^t$. 
To see this, define $s_j(y^t)
\defeq \sum_{i=1}^t y_i^j$ for $y^t\in[0,1]^t$.
We can then rewrite the above expressions as
\begin{align*}
(\rhob_{n}(y^t;m))_k
&= \sum_{j=0}^{2n} \binom{2n}{j} \frac{s_{j}(y^t)}{(-m)^{j}}
- \sum_{j=0}^k \binom{k}{j} \frac{s_j(y^t)}{(-m)^j}
\end{align*}
for $k=1,\ldots,2n-1$, and
\begin{align*}
\eta_n(y^t;m)
&=\sum_{j=0}^{2n} \binom{2n}{j} \frac{s_j(y^t)}{(-m)^j}.
\end{align*}

\subsubsection{\titrevision{Lower-bound universal portfolio}}
\titrevision{We now propose to consider a mixture of the lower bounds over $b\in[0,1]$.}
For the sake of computational tractability,
for $\rhob_0^{(1)}\in\Real^{2n-1},\eta_{0}^{(1)}\in\Real,\rhob_0^{(2)}\in\Real^{2n-1},\eta_{0}^{(2)}\in\Real$,
we can define a \emph{conjugate prior} over $b\in[0,1]$ that corresponds to the lower bound defined in Lemma~\ref{lem:generalized_lower_bound_product} as
\begin{align*}
&\pi_m^{(n)}(b;\rhob_0^{(1)},\eta_{0}^{(1)},\rhob_0^{(2)},\eta_{0}^{(2)})
\numberthis\label{eq:general_prior}
\\
&\defeq \frac{\phi_n(\frac{\bar{b}}{\bar{m}};\rhob_0^{(1)},\eta_0^{(1)})\ones_{(m,1)}(b) + \phi_n(\frac{b}{m};\rhob_0^{(2)},\eta_0^{(2)})\ones_{(0,m]}(b)}{\bar{m} Z_n(\rhob_0^{(1)},\eta_{0}^{(1)})
+ m Z_n(\rhob_0^{(2)},\eta_{0}^{(2)})}.
\end{align*}
We will use this prior distribution to compute a mixture,
which is of a different shape compared to the beta prior of Cover's UP in general. 
In particular, however, if we set $\rhob_{0}^{(i)}=0,\eta_{0}^{(i)}=0$ for each $i=1,2$, then the prior $\pi_m^{(n)}(b)$ boils down the uniform distribution over $[0,1]$ for any $m\in[0,1]$, and the resulting mixture wealth lower bound can be viewed as a lower bound to Cover's UP with the uniform prior (\ie $B(1,1)$ prior).
\titrevision{We thus call the resulting wealth lower bound the \emph{lower-bound UP wealth of approximation order $n$}, or LBUP($n$) in short.}

\begin{theorem}
\label{thm:lbup}
Let $n\ge 1$ and $m\in(0,1)$.
If $\pi(b)=\pi_m^{(n)}(b;\rhob_0^{(1)},\eta_{0}^{(1)},\rhob_0^{(2)},\eta_{0}^{(2)})$, for any $y^t\in[0,1]^t$,
we have
\begin{align*}
\frac{\wealth_t^\pi(y^t;\frac{1}{m},\frac{1}{\bar{m}})}{\wealth_0}
\ge 
\breve{h}_t^{(n)}(m;y^t;\rhob_0^{(1)},\eta_{0}^{(1)},\rhob_0^{(2)},\eta_{0}^{(2)}),
\end{align*}
where we define $\breve{h}_t^{(n)}(m;y^t;\rhob_0^{(1)},\eta_{0}^{(1)},\rhob_0^{(2)},\eta_{0}^{(2)})$ in \eqnref{eq:lbup_wealth}.
\end{theorem}
\begin{proof}
By the aforementioned property in~\eqnref{eq:additive},
the lower bound readily follows from the definition of the prior in~\eqnref{eq:general_prior} and the lower bound of Lemma~\ref{lem:generalized_lower_bound_product}.
\end{proof}

The wealth lower bound can be viewed as a discrete mixture, which is called a \emph{hedged} betting in \citep{Waudby-Smith--Ramdas2020b}, over two continuous mixture wealth processes.
Here, the first term $Z_n(\rhob_n(y^t;m),\eta_{n}(y^t;m))$ corresponds to the mixture of constant bettors $b\in[m,1)$ and the second term corresponds to $b\in(0,m]$, and they are weighted by $(1-m)$ and $m$, respectively.

\subsubsection{\titrevision{Confidence sequence from LBUP}}
We now state the resulting confidence sequence from this lower bound approach, which is our main technical result.
It is worth noting that since any mixture wealth process has an interval guarantee (Theorem~\ref{thm:up_interval}), the lower-bound approach can be easily made to inherit it, by choosing the largest interval that contains the empirical mean.

\begin{corollary}
\label{cor:lbup_interval}
Let $(Y_t)_{t=1}^{\infty}$ be a $[0,1]$-valued sequence such that $\E[Y_t|Y^{t-1}]\equiv\mu$ for some $\mu\in(0,1)$.
Let $(\breve{\mu}_t^\low(y^t;\d),\breve{\mu}_t^\up(y^t;\d))$ be the largest interval such that
\begin{align*}
\hat{\mu}_t
&\in(\breve{\mu}_t^\low(y^t;\d),\breve{\mu}_t^\up(y^t;\d))
\numberthis
\label{eq:largest_mid_interval}\\
&\subseteq
\Bigl\{m\in(0,1)\suchthat \breve{h}_t^{(n)}(m;y^t;\rhob_0^{(1)},\eta_{0}^{(1)},\rhob_0^{(2)},\eta_{0}^{(2)})
< \frac{1}{\d}\Bigr\},
\end{align*}
where $\hat{\mu}_t\defeq\frac{1}{t}\sum_{i=1}^t y_i$ denotes the empirical mean.
Then, for any $\d\in(0,1]$, we have
\[
\P\{\exists t\ge 1\suchthat \mu \notin (\breve{\mu}_t^\low(Y^t;\d),\breve{\mu}_t^\up(Y^t;\d))\}
\le \d.
\]
\end{corollary}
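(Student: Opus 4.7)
The plan is to follow the Ville-inequality blueprint of Section~\ref{sec:blueprint}, but apply it to the wealth lower bound $\breve{h}_t^{(n)}$ from Theorem~\ref{thm:lbup} rather than to an exact mixture wealth. First, fix the unknown true mean $\mu$ and consider the continuous two-horse race with the odds vector $\xv_t=[Y_t/\mu,(1-Y_t)/(1-\mu)]$, which is fair since $\E[Y_t\cond Y^{t-1}]=\mu$. Pick the prior $\pi=\pi_\mu^{(n)}(\,\cdot\,;\rhob_0^{(1)},\eta_0^{(1)},\rhob_0^{(2)},\eta_0^{(2)})$; although the shape of this prior depends on the unknown $\mu$, for the purposes of the proof it is just some fixed probability measure on $[0,1]$. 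By Corollary~\ref{cor:prop:wealth_is_martingale} combined with Corollary~\ref{cor:mixture_is_achievable}, the $\pi$-mixture wealth $\wealth_t^{\pi}(Y^t;\tfrac{1}{\mu},\tfrac{1}{1-\mu})$ is a nonnegative martingale starting from $\wealth_0$, so Ville's inequality (Theorem~\ref{thm:ville}) yields
\[
\P\Bigl\{\sup_{t\ge 1}\wealth_t^{\pi}\bigl(Y^t;\tfrac{1}{\mu},\tfrac{1}{1-\mu}\bigr)/\wealth_0 \ge \tfrac{1}{\delta}\Bigr\} \le \delta.
\]

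Next, I would apply Theorem~\ref{thm:lbup} at $m\gets\mu$ to get $\breve{h}_t^{(n)}(\mu;Y^t;\ldots)\le\wealth_t^{\pi}/\wealth_0$ pointwise, so the tail bound immediately propagates to the lower bound: with probability at least $1-\delta$, $\mu$ lies in the set $S_t\defeq\{m\in(0,1):\breve{h}_t^{(n)}(m;Y^t;\ldots)<1/\delta\}$ for every $t\ge 1$. To show the interval $(\breve{\mu}_t^\low,\breve{\mu}_t^\up)$ is well-defined, I would invoke Lemma~\ref{lem:small_wealth}: for each constant bettor $b\in[0,1]$, $\wealth_t^b(Y^t;\tfrac{1}{\hat{\mu}_t},\tfrac{1}{1-\hat{\mu}_t})\le\wealth_0$, and averaging under the prior $\pi_{\hat{\mu}_t}^{(n)}$ preserves this bound; Theorem~\ref{thm:lbup} then gives $\breve{h}_t^{(n)}(\hat{\mu}_t;Y^t;\ldots)\le 1<1/\delta$, so $\hat{\mu}_t\in S_t$ almost surely.

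Combining the two items, on an event of probability at least $1-\delta$, both $\mu$ and $\hat{\mu}_t$ lie in $S_t$ for every $t$. Since $(\breve{\mu}_t^\low,\breve{\mu}_t^\up)$ is by construction the largest interval containing $\hat{\mu}_t$ and contained in $S_t$, the corollary will follow once we verify that $\mu$ lies in the connected component of $\hat{\mu}_t$ inside $S_t$. I expect this last step to be the main obstacle: unlike the exact mixture wealth, which is log-convex in $m$ by Theorem~\ref{thm:up_interval}(b) and hence has $S_t$ automatically equal to an interval, the lower bound $\breve{h}_t^{(n)}(\,\cdot\,;Y^t;\ldots)$ is not transparently log-convex (or even quasi-convex), so $S_t$ could a priori have multiple components. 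I would handle this by verifying a structural monotonicity property of $\breve{h}_t^{(n)}$ around its minimizer $\hat{\mu}_t$ — namely, that $m\mapsto\breve{h}_t^{(n)}(m;Y^t;\ldots)$ is nonincreasing on $(0,\hat{\mu}_t]$ and nondecreasing on $[\hat{\mu}_t,1)$ — which would force $S_t$ to be a single interval and close the argument.
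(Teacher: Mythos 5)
Your steps (1)--(3) are exactly the paper's argument: Ville's inequality applied to the $\pi$-mixture martingale at the true $\mu$, Theorem~\ref{thm:lbup} to transfer the tail bound to the lower bound $\breve{h}_t^{(n)}$, and Lemma~\ref{lem:small_wealth} (equivalently Theorem~\ref{thm:up_interval}(c)) to guarantee $\hat{\mu}_t\in S_t\defeq\{m: \breve{h}_t^{(n)}(m;y^t;\cdot)<1/\d\}$. You have also correctly isolated the crux: one must show that $\mu$ and $\hat{\mu}_t$ lie in the \emph{same} connected component of $S_t$, since $\breve{h}_t^{(n)}$ is not transparently quasi-convex in $m$.

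However, your proposed resolution of that last step is a genuine gap. You defer it to "verifying a structural monotonicity property" of $m\mapsto\breve{h}_t^{(n)}(m;Y^t;\cdot)$ around $\hat{\mu}_t$, but you neither prove this property nor is it needed; the definition \eqref{eq:largest_mid_interval} was engineered precisely so that no unimodality of the lower bound is required. The paper closes the gap by a sandwich argument: the \emph{exact} mixture confidence set $\tilde{\Cc}_t^{\pi}(y^t;\d)$ is an interval (log-convexity of the exact mixture wealth, Theorem~\ref{thm:up_interval}(b)), it contains $\hat{\mu}_t$ (Theorem~\ref{thm:up_interval}(c)), it contains $\mu$ for all $t$ on the Ville event, and it is contained in $S_t$ because $\breve{h}_t^{(n)}$ pointwise lower-bounds the exact mixture wealth. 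Since $(\breve{\mu}_t^\low,\breve{\mu}_t^\up)$ is by definition the \emph{largest} interval containing $\hat{\mu}_t$ and contained in $S_t$, it must contain the interval $\tilde{\Cc}_t^{\pi}$, hence contains $\mu$ on that event --- no analysis of the shape of $\breve{h}_t^{(n)}$ is ever performed. You should replace your monotonicity step with this containment argument; as written, your proof is incomplete, and the monotonicity claim you would need is exactly the kind of statement the authors avoid having to establish.
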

\begin{proof}
Since the lower bound lower-bounds the mixture wealth and the mixture wealth always guarantees a sublevel set to be an interval, we can guarantee that the sublevel set defined by the lower bound always subsume the confidence set $\tilde{\Cc}_t^\pi(y^t)$ induced by the mixture wealth.
Further, $\hat{\mu}_t\in \tilde{\Cc}_t^\pi(y^t)$ by Theorem~\ref{thm:up_interval}(c), $(\breve{\mu}_t^\low(y^t;\d),\breve{\mu}_t^\up(y^t;\d))$ satisfying \eqnref{eq:largest_mid_interval} must be a valid uniform confidence interval (at level $1-\d$).
\end{proof}

A reasonable choice of the hyperparameters for the prior is all-zero, \ie $\rhob_{0}^{(i)}=0,\eta_{0}^{(i)}=0$ for each $i=1,2$.
In Section~\ref{sec:exps}, we plot the evolution of the wealth processes of UP and LBUP with simulated datasets, to illustrate the power of the lower-bounds approach; see Fig.~\ref{fig:evolution}.

\subsubsection{\titrevision{Implementation and complexity}}
\titrevision{Our implementation employs the bisect method to compute the confidence sequence with the approach, as the Newton--Raphson iteration is hard to apply as the derivative of the LBUP wealth with respect to $m$ is difficult to compute.}

\titrevision{We remark that to compute the lower bound of Theorem~\ref{thm:lbup} for any value of $m\in(0,1)$, it suffices to keep track of the (unnormalized) empirical moments $(s_j(y^t))_{j=0}^{2n}$, a constant number of real values.
\titrevision{As a result, the per-step time complexity is $O(n)$ and the storage complexity is $O(n)$ for any time step.}
This is in sharp contrast to Cover's UP (as shown in Theorem~\ref{thm:exact_up}) and \titrevision{PRECiSE-CO96} of \citep{Orabona--Jun2021}, which require to store the entire history $Y^t$ and $O(t)$-time complexity at round $t$.}

\begin{remark}[An approximation--computation trade off in the approximation order]
\label{rem:tradeoff}
The lower bound in~\eqnref{eq:lbup_wealth} with the parameter $n\ge 1$ can be understood as an approximation using empirical moments $(s_j(y^t))_{j=1}^{2n}$.
\titrevision{As explained after Lemma~\ref{lem:generalized_lower_bound}, we empirically show that the lower bound with larger $n$ leads to tighter confidence sequences.}
Note, however, that there exists a downside for using a large $n$ in practice.
To implement the wealth lower bound in Theorem~\ref{thm:lbup}, we need to compute the normalization constant $Z_n(\rhob,\eta)$, which is of the form
\[
\int_0^1 x^\eta \exp\Bigl(\sum_{k=0}^{2n-1} a_k x^k\Bigr)\diff x.
\]
In general, there is no closed-form expression for this kind of integrals except the special case of $n=1$ with $\rho_1\ge 0$; see \eqnref{eq:normal_cont_special}.
It is thus necessitated to rely on numerical integration methods\footnote{In the experiments, we used the \texttt{scipy.integration.quad} method of the python package \texttt{scipy}~\citep{2020SciPy-NMeth}.}, but a higher-order $n$ may lead to numerical instability (especially for $m\approx 0$ and $m\approx 1$) and longer time to compute the integral. 
We empirically found that $n=3$ already closely approximates the performance of Cover's UP while giving a decent improvement to $n=1$ or 2, and using a larger order usually provides only marginal gain. 
\end{remark}

\subsubsection{A hybrid approach for the best of both worlds}
On the one hand, while the UP approach becomes time-consuming as a sequence gets longer, it demonstrates a great empirical performance especially for a small-sample regime.
On the other hand, the LBUP approach performs almost identically to the UP approach with only constant per-round complexity, but it requires some burn-in samples to be tight enough as a lower bound as demonstrated below in Fig.~\ref{fig:evolution}.

To achieve the best of the both worlds, we can hybrid the UP and LBUP approaches: namely, we run the UP for an initial few rounds, then we run the suboptimal lower-bound UP afterwards.
\titrevision{We emphasize that, in principle, any gambling strategy can be run in the first part, but we specifically use UP for its outstanding performance.}

\titrevision{Formally, the performance of the hybrid approach can be stated as follows:
\begin{corollary}
\label{cor:hybridup}
Consider the gambling strategy that runs UP for the first $t_{\texttt{UP}}$ steps and switches to run LBUP afterwards.
If the LBUP strategy uses the prior of the form
\[\pi_m^{(n)}(b;\rhob_n(y^{t_{\texttt{UP}}};m),\eta_n(y^{t_{\texttt{UP}}};m),\rhob_n(\bar{y}^{t_{\texttt{UP}}};\bar{m}),\eta_n(\bar{y}^{t_{\texttt{UP}}};\bar{m})),
\numberthis\label{eq:hybrid_lbup_prior}
\] 
then this hybrid gambling strategy attains the cumulative wealth defined in \eqnref{eq:hybrid_wealth} at time step $t\ge t_{\texttt{UP}}$.
\end{corollary}
We remark that the choice of prior in~\eqnref{eq:hybrid_lbup_prior} is natural as it is induced by the first part of the sequence $y^{t_{\texttt{UP}}}$.
We empirically demonstrate the effectiveness of this hybrid approach in Section~\ref{sec:exps}.}

\begin{table*}
\caption{Definition of the cumulative wealth attained by the hybrid strategy in Corollary~\ref{cor:hybridup}.}
\centering
\begin{minipage}{\textwidth}
\begin{align}
\wealthup_{t_{\texttt{UP}}}\Bigl(y^{t_{\texttt{UP}}};\frac{1}{m},\frac{1}{\bar{m}}\Bigr)
\cdot \frac{\bar{m} Z_n(\rhob_n(y^t;m),\eta_{n}(y^t;m)) 
+ m Z_n(\rhob_n(\bar{y}^t;\bar{m}),\eta_{n}(\bar{y}^t;\bar{m}))}
{\bar{m} Z_n(\rhob_n(y^{t_{\texttt{UP}}};m),\eta_n(y^{t_{\texttt{UP}}};m)) 
+ m Z_n(\rhob_n(\bar{y}^{t_{\texttt{UP}}};\bar{m}),\eta_n(\bar{y}^{t_{\texttt{UP}}};\bar{m}))}.
\label{eq:hybrid_wealth}
\end{align}
\medskip
\hrule
\end{minipage}
\label{tab:long_eq4}
\end{table*}

\section{Related Work}
\label{sec:related}

\titrevision{
Historically,
\citet{Darling--Robbins1967} first introduced the notion of confidence sequence, and the idea was further studied by \citet{Hoeffding1963} and \citet{Lai1976}.
{A recent line of work such as \citep{Ramdas--Ruf--Larsson--Koolen2020,Waudby-Smith--Ramdas2020b,Howard--Ramdas--McAuliffe--Sekhon2021,Jun--Orabona2019,Orabona--Jun2021} has brought this idea back to researchers' attention. 
} 
The concept of confidence sequences has a close connection to ``safe testing''~\citep{Grunwald--de-Heide--Koolen2020}.
The idea of gambling for confidence sequences (or equivalently, time-uniform concentration inequalities) can be found in several different areas.
Shafer and Vovk have advocated to use the idea of gambling to establish a game-theoretic theory of probability~\citep{Shafer--Shen--Vereshchagin--Vovk2011,Shafer--Vovk2019}. 
Building upon the gambling approach,
\citet{Shafer2021} proposed to test by betting.
\citet{Waudby-Smith--Ramdas2020b} explored the gambling-based approach for constructing confidence sequences in a great detail, and derived and analyzed various betting algorithms.
\citet{Hendriks2021} also explored a similar idea of using betting algorithms and numerically inverting the wealth processes into confidence intervals. 
\citet{Jun--Orabona2019} proposed to use coin betting for developing parameter-free online convex optimization algorithms using the notion of \emph{regret}, and constructed a time-uniform concentration inequality that achieves the law of iterated logarithm for sub-Gaussian random vectors in Banach spaces; see \citep[Section~7.2]{Jun--Orabona2019}.
\citet{Orabona--Jun2021} examined the idea of \citet{Cover1991}'s universal portfolio and its regret analysis to construct tight confidence sequences based on a numerical method.}
For a more detailed literature survey on the idea of gambling for confidence sequences, we refer an interested reader to \citep[Appendix~D]{Waudby-Smith--Ramdas2020b} and \citep[Section~2]{Orabona--Jun2021}, and references therein.

\citet{Waudby-Smith--Ramdas2020a} studied the problem of estimating the mean of a set of deterministic numbers via random sampling without replacement.
As illustrated by \citet{Waudby-Smith--Ramdas2020b}, any construction of a confidence sequence under the current problem setting (including thus the UP and LBUP algorithms) can be easily converted into a confidence sequence for
the sampling without replacement problem.

The algorithms of \citet{Orabona--Jun2021} (dubbed as PRECiSE-CO96, PRECiSE-A-CO96, and PRECiSE-R70) based on the regret analysis were inspired by \citet{Rakhlin--Sridharan2017}, who showed an essential equivalence between the regret guarantee of certain online learning algorithms and concentration inequalities.
In contrast to such a regret-based approach, the current paper illustrates that a sharp concentration can be obtained directly via a deterministic lower bound of supermartingales, without invoking the notion of regret.
We acknowledge, however, that a regret analysis might be essential to analyze the resulting concentration inequality in a closed form, as was done therein.
{PRECiSE-A-CO96, a constant-complexity version of PRECiSE-CO96, used a similar idea of lower bounding the multiplicative wealth by an inequality of \citet{Fan--Grama--Liu2015}.
The difference in our work is twofold. 
First, we rely on Lemma~\ref{lem:generalized_lower_bound}, which is a higher-order-moments generalization of the lower bound; see Appendix~\ref{app:log_lower_bounds}.
In the next section,
we also empirically show that the LBUP algorithm with $n>1$ clearly improves LBUP with $n=1$, which corresponds to the technique of \citet{Fan--Grama--Liu2015} and thus \citet{Orabona--Jun2021}.
Second, rather than maximizing over the betting $b\in[0,1]$ and invoking the regret bound, we take a mixture over the lower bound as if it were the target wealth process. 
While conceptually being simpler in the sense that it does not invoke a regret bound, the downside of our mixture approach is the computational bottleneck in the numerical integration step; see Remark~\ref{rem:tradeoff}.
We finally remark that a mix-and-match approach of our higher-order-moments bound (Lemma~\ref{lem:generalized_lower_bound}) and the regret approach~\citep{Orabona--Jun2021} is natural; 
we note, however, that maximizing the lower bound over $b$ in Lemma~\ref{lem:generalized_lower_bound_product} may not admit a closed-form expression for $n>1$, which necessitates to invoke a numerical optimization. We do not pursue this direction in the current paper.
Finally, \citeauthor{Orabona--Jun2021} proposed another variant PRECiSE-R70 inspired by the mixture idea in \citep{Robbins1970} and showed that the resulting confidence sequence achieves the law of iterated logarithm. }

\titrevision{At a high level, 
the lower-bound approach (LBUP) we propose in this paper is similar in spirit to \citep{Jun--Orabona2019,Orabona--Jun2021,Waudby-Smith--Ramdas2020b} that rely on a wealth lower bound.
Among these, however, LBUP is more closely related to the ``lower bound on the wealth (LBOW)'' approach of \citep{Waudby-Smith--Ramdas2020b}, in the sense that both LBUP and \citep{Waudby-Smith--Ramdas2020b} rely on a lower bound on $\log(b\frac{y}{m} + (1-b)\frac{1-y}{1-m})$; as alluded to earlier, we note that the lower bound in Lemma~16 generalizes the one used by LBOW.}
As alluded to earlier, Lemma~\ref{lem:generalized_lower_bound} is followed from a new property of the function $t\mapsto \log(1+t)$ (Lemma~\ref{lem:monotone}), which can be viewed as a higher-order extension of the proof technique of \citep[Lemma 4.1]{Fan--Grama--Liu2015}. Since this inequality has been used to derive exponential inequalities for martingales~\citep{Fan--Grama--Liu2015} and empirical Bernstein inequalities~\citep{Howard--Ramdas--McAuliffe--Sekhon2021,Waudby-Smith--Ramdas2020a}, this technique could lead to a tighter result of another problem in this domain.
The idea of the conjugate prior is also common in the literature, including the beta prior of \citet{Cover1991} and a more recent example in \citep{Howard--Ramdas--McAuliffe--Sekhon2021}.
A subtle difference in our approach is that we directly take a mixture over a lower bound with respect to a conjugate prior tailored to the new lower bound we propose, other than aiming to maximize the lower bound with a rather heuristic approximation as done in \citep{Waudby-Smith--Ramdas2020b}.

\titrevision{The topic of universal portfolio selection is a classical topic in information theory that has been studied for more than three decades, and has been established to have intimate connections to problems like data compression and sequential prediction. 
An interested reader is referred to~\cite{Cover--Thomas2006} and the references therein for a detailed treatment of these topics and their interconnections: Chapter 6 explores the horse-race formalism as well as establishes connections between betting on horse races and data compression; Chapters 11 and 13 discuss universal compression and connections to prediction and hypothesis testing; Chapter 14 investigates the closely allied concept of Kolmogorov complexity and the minimum description length (MDL) principle (see also~\cite{Grunwald2007} and~\cite{Grunwald--Roos2019} for a detailed treatise on the MDL principle); and Chapter 16 delves into portfolio selection and the UP method. Universal portfolio selection still remains a widely studied topic because a method that simultaneously achieves low regret and low time complexity has been elusive---see~\cite{Luo--Wei--Zheng2018, VanErven--VanderHoeven--Kotlowski--Koolen2020, Mhammedi--Rakhlin2022, Zimmert--Agarwal--Kale2022} and the references within for recent work in this direction.}

\newcommand{\omark}{$\mathsf{O}$}
\newcommand{\xmark}{$\mathsf{X}$}
\begin{table*}[t]
\centering

\caption{\titrevision{Comparison of existing confidence sequences based on gambling for bounded stochastic processes and proposed algorithms. 
\scndrevision{The ``convexity'' indicates if the wealth attained by a gambling strategy is guaranteed to be quasi-convex as a function of $m$, so that it naturally induces confidence \emph{intervals}; \xmark{} indicates that the corresponding wealth is not guaranteed to be quasi-convex}.
The complexity in the last column describes both (per-step) time complexity and storage complexity at time step $t$.}}
\titrevision{
\iftit
\else
\begin{small}
\fi
\setlength\tabcolsep{2pt}
\begin{tabular}{l l c c}
\toprule
\textbf{Method} & \textbf{Idea} & \textbf{Convexity} & \textbf{Complexity} \\
\midrule
GRAPA~\citep{Waudby-Smith--Ramdas2020b} & a causal approximation of the Kelly criterion~\citep{Kelly1956} & \scndrevision{\xmark}  & $O(t)$\\
LBOW~\citep{Waudby-Smith--Ramdas2020b} & GRAPA + an inequality due to \citep[Lemma~4.1]{Fan--Grama--Liu2015} & \scndrevision{\xmark} & $O(t)$\\
dKelly~\citep{Waudby-Smith--Ramdas2020b} & a mixture wealth of any finite bettors & \scndrevision{\xmark} & (bettor-dependent)\\
ConBo~\citep{Waudby-Smith--Ramdas2020b} & bet against confidence boundaries & \omark & (bettor-dependent) \\
\midrule
UP~\citep{Cover1991} & a Dirichlet mixture wealth of constant bettors & \omark & $O(t)$ \\
\midrule
PRECiSE-CO96~\citep{Orabona--Jun2021} & a lower bound on the wealth of UP via a regret upper bound & \scndrevision{\omark} & $O(t)$ \\
PRECiSE-A-CO96~\citep{Orabona--Jun2021} & PRECiSE-CO96 + \citep[Lemma~4.1]{Fan--Grama--Liu2015} & \omark & $O(1)$ \\
PRECiSE-R70~\citep{Orabona--Jun2021} & PRECiSE-CO96 with weight from \citep{Robbins1970} & \scndrevision{\omark} & $O(t)$ \\
\midrule
HR (Theorem~\ref{thm:hr_interval}) & UP restricted to two-horse race ($\equiv$ UP for $\{0,1\}^t$) & \omark & $O(1)$ \\
LBUP$(n)$ (Theorem~\ref{thm:lbup}) & a mixture of lower bounds of the wealth of constant bettors & \omark & $O(n)$ \\
HybridUP$(n)$ (Corollary~\ref{cor:hybridup}) & a combination of UP and LBUP$(n)$ & \omark & $O(n)$ \\
\bottomrule
\end{tabular}
\iftit
\else
\end{small}
\fi}
\label{tab:comparison}
\end{table*}

\section{Experiments}
\label{sec:exps}
We implemented the algorithms studied in this paper in Python and SciPy~\citep{2020SciPy-NMeth}. The code is publicly available.\footnote{\url{https://github.com/jongharyu/confidence-sequence-via-gambling}}
We translated the official MATLAB implementation\footnote{ \url{https://github.com/bremen79/precise}} of PRECiSE algorithms by the authors of \citep{Orabona--Jun2021} to Python for comparison. 

\subsection{Evolution of Wealth Processes}
\label{sec:exp:evolution}
\begin{figure*}[tp]
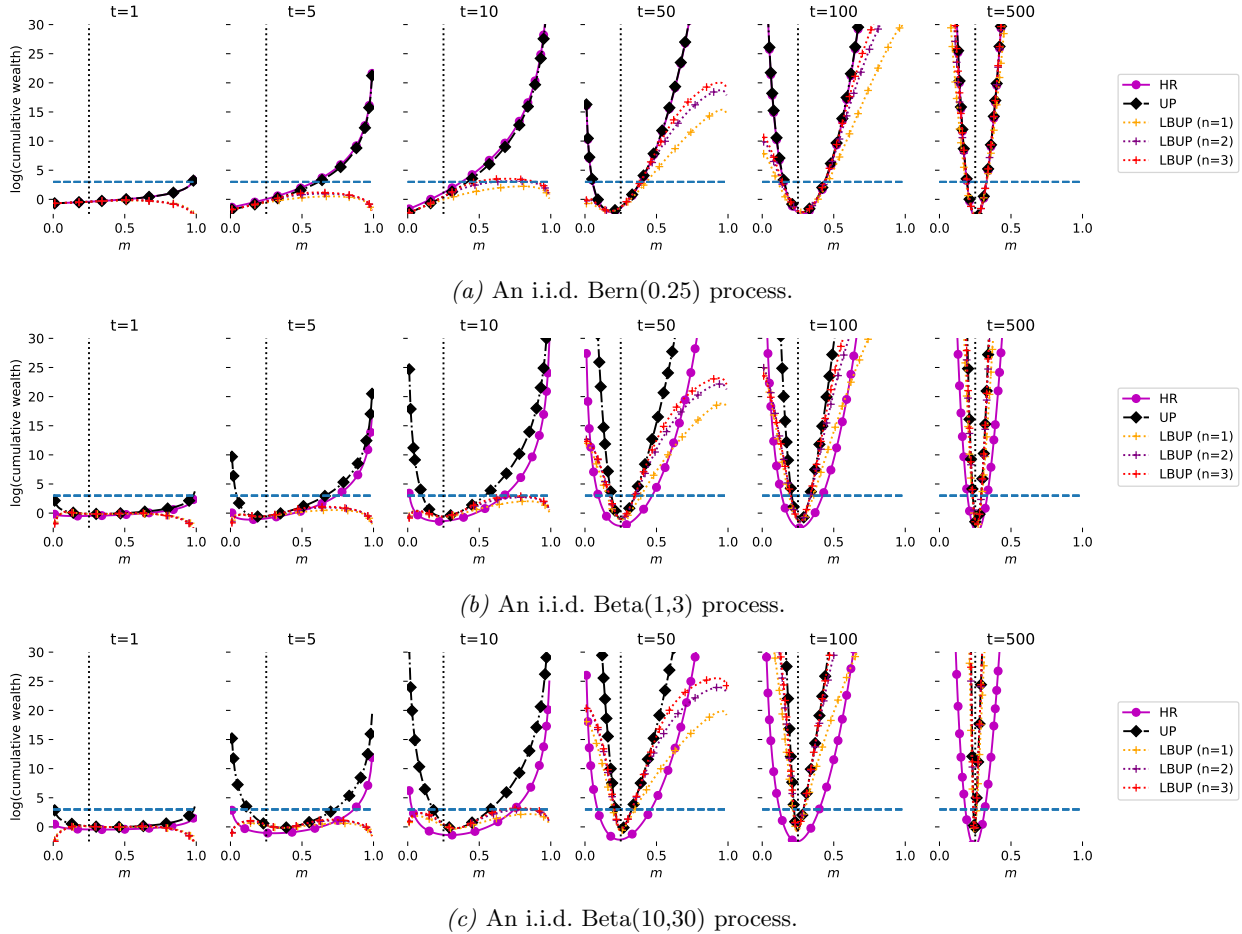

\centering
\begin{subfigure}[t]{\textwidth}
\centering
\includegraphics[width=\textwidth]{figs/ex_wealths_bern0.25.pdf}
\caption{An \iid Bern(0.25) process.}
\end{subfigure}%

\begin{subfigure}[t]{\textwidth}
\centering
\includegraphics[width=\textwidth]{figs/ex_wealths_beta1,3.pdf}
\caption{An \iid Beta(1,3) process.}
\end{subfigure}%

\begin{subfigure}[t]{\textwidth}
\centering
\includegraphics[width=\textwidth]{figs/ex_wealths_beta10,30.pdf}
\caption{An \iid Beta(10,30) process.}
\end{subfigure}%
\caption{The evolution of the wealth processes with respect to single realizations of \iid Bern(0.25), Beta(1,3), and Beta(10,30) processes.
Note that the true mean parameter $\mu$ is 0.25 for all three cases and is indicated by the vertical lines, while the variances are decreasing in the displayed order.
HR and UP correspond to the two-horse-race-based algorithm and the UP-based algorithm, respectively.
The $x$-axis corresponds to the parameter $m$, and the $y$-axis indicates the logarithmic cumulative wealth of each strategy for the game with a corresponding parameter $m$. The horizontal lines indicate an example threshold $\ln\frac{1}{0.05}\approx2.996$ for $\d=0.05$.}
\label{fig:evolution}
\end{figure*}

To illustrate the tightness of the proposed lower bounds, in Fig.~\ref{fig:evolution}, we visualize the evolution of the wealth processes of \titrevision{the discrete-horse-race-based gambling (Theorem~\ref{thm:hr_interval}) which we abbreviate it as HR}, UP (Theorem~\ref{thm:exact_up}, with the uniform prior), and LBUP (Corollary~\ref{cor:lbup_interval}, with the uniform prior) confidence sequences, varying the parameter $m\in(0,1)$ for $t\in\{1,5,10,50,100,500\}$.
We used single realizations of \iid processes under $\mathrm{Bern}(0.25)$, $\mathrm{Beta}(1,3)$, and $\mathrm{Beta}(10,30)$ distributions. 
Note that these distributions share the common mean $\mu=0.25$, while the variances are decreasing in the order ($\frac{3}{16}$, $\frac{3}{80}$, and $\frac{3}{656}$).

In all the cases, UP is supposed to achieve the highest wealth. For the Bern(0.25) process in (a), the HR and UP confidence sequences are equivalent, as pointed out earlier.
For the beta distributions (b) and (c), UP accumulates significantly larger wealth than HR, and the suboptimality gap gets larger as the variance decreases, which is expected by the Jensen gap in \eqnref{eq:jensen}.
We also remark that the UP and HR wealth are always log-convex, as stated in Theorems~\ref{thm:hr_interval} and \ref{thm:up_interval}.

As shown in the graph, the LBUP wealth tightly lower bounds the UP wealth.
We observe that the gap becomes smaller as the round goes on, and using a larger order $n$ consistently improves the approximation, by a significant level especially in earlier stages.
Moreover, the lower bound is particularly tight around the true mean and the typical threshold level $\ln\frac{1}{\d}<5$,\footnote{Note that $\ln\frac{1}{0.05}\approx 3$ and $\ln\frac{1}{0.01}\approx 4.6$.} and thus can be used to construct a tight outer bound of the UP confidence sequence.

\subsection{Confidence Sequences}
\begin{figure*}[tp]
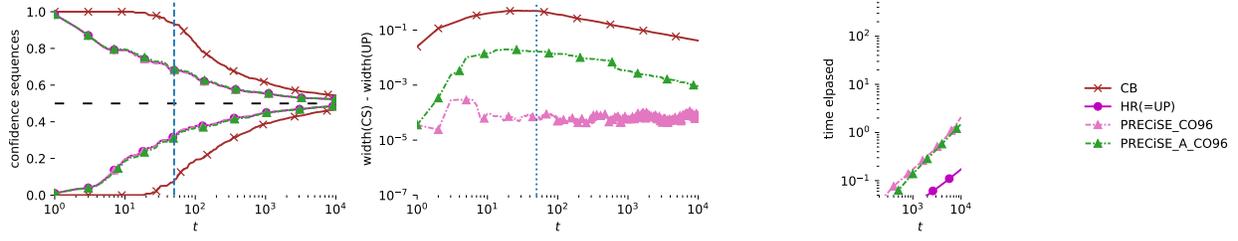
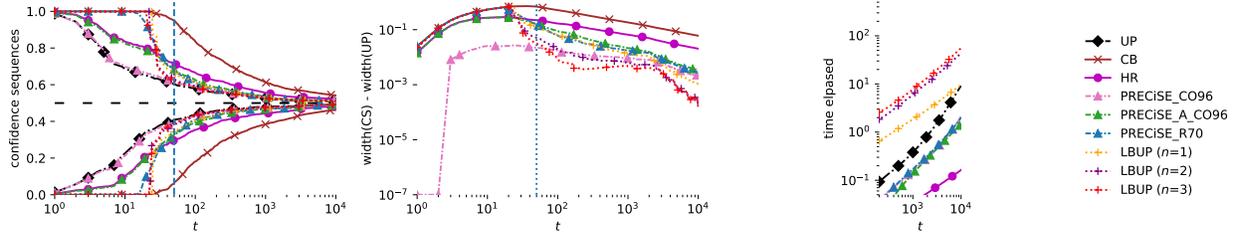
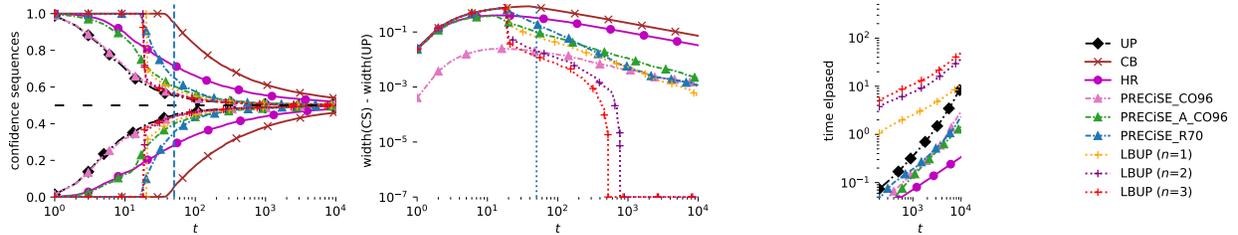

\centering
\begin{subfigure}[t]{\textwidth}
\centering
\includegraphics[width=\textwidth]{figs/ex_bern0.5_set1.pdf}\vspace{-.5em}
\caption{With \iid $\mathrm{Bern}(0.5)$ processes.}
\end{subfigure}\vspace{.5em}%

\begin{subfigure}[t]{\textwidth}
\centering
\includegraphics[width=\textwidth]{figs/ex_beta1,1_set1.pdf}
\includegraphics[width=\textwidth]{figs/ex_beta1,1_set2.pdf}\vspace{-.5em}
\caption{With \iid $\mathrm{Beta}(1,1)$ processes.}
\end{subfigure}\vspace{.5em}%

\begin{subfigure}[t]{\textwidth}
\centering
\includegraphics[width=\textwidth]{figs/ex_beta10,10_set1.pdf}
\includegraphics[width=\textwidth]{figs/ex_beta10,10_set2.pdf}\vspace{-.5em}
\caption{With \iid $\mathrm{Beta}(10,10)$ processes.}
\end{subfigure}%

\caption{Examples of confidence sequences with respect to \iid $\mathrm{Bern}(0.5)$, $\mathrm{Beta}(1,1)$, and $\mathrm{Beta}(10,10)$ processes at level $0.95$.  
For each distribution, shown here are the averages of five independent runs. 
The first column plots the confidence sequences. 
The second column plots the gap of the size of the confidence sequences from that of UP in log scale, where we took maximums of the differences and $10^{-7}$ for visualization. (Note that for the binary process example, HR is equivalent to UP.)
The last column shows the elapsed time for each algorithm.
For the continuous process examples in (b) and (c), the results are shown in two rows to avoid clutters.
The results from UP, PRECiSE-CO96, LBUP's are shown in both rows for comparison, and the second row contains results from constant-per-step-complexity algorithms PRECISE-A-CO96 and HybridUP's.}
\label{fig:exps_0.5}
\end{figure*}

\begin{figure*}[tp]
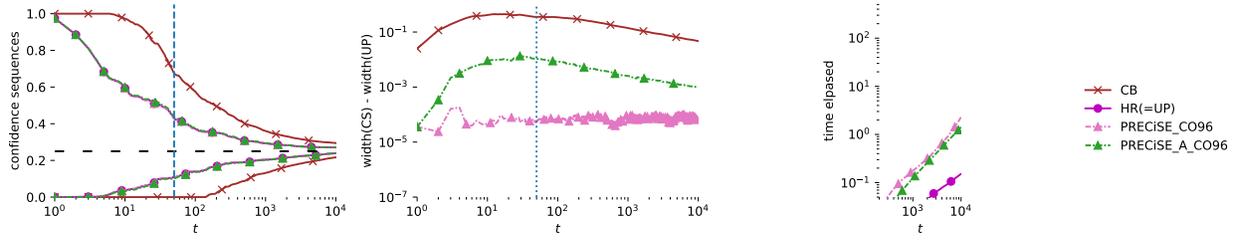
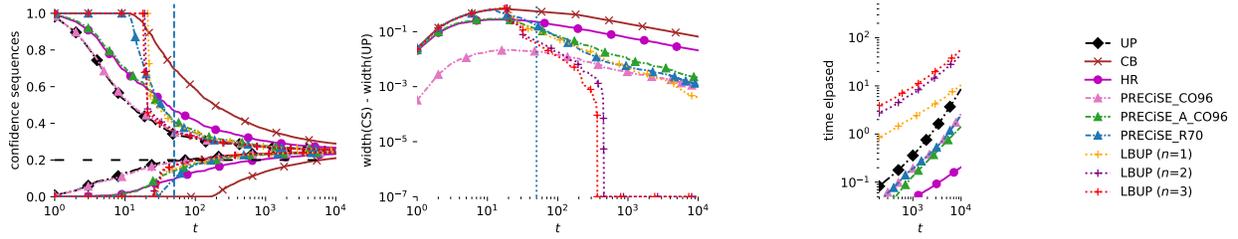
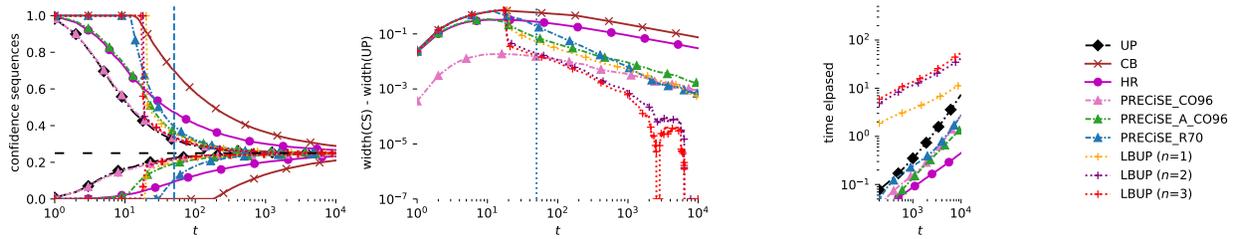

\centering
\begin{subfigure}[t]{\textwidth}
\centering
\includegraphics[width=\textwidth]{figs/ex_bern0.25_set1.pdf}\vspace{-.5em}
\caption{With \iid $\mathrm{Bern}(0.25)$ processes.}
\end{subfigure}\vspace{.5em}%

\begin{subfigure}[t]{\textwidth}
\centering
\includegraphics[width=\textwidth]{figs/ex_beta1,3_set1.pdf}
\includegraphics[width=\textwidth]{figs/ex_beta1,3_set2.pdf}\vspace{-.5em}
\caption{With \iid $\mathrm{Beta}(1,3)$ processes.}
\end{subfigure}\vspace{.5em}%

\begin{subfigure}[t]{\textwidth}
\centering
\includegraphics[width=\textwidth]{figs/ex_beta10,30_set1.pdf}
\includegraphics[width=\textwidth]{figs/ex_beta10,30_set2.pdf}\vspace{-.5em}
\caption{With \iid $\mathrm{Beta}(10,30)$ processes.}
\end{subfigure}%

\caption{Examples of confidence sequences with respect to \iid $\mathrm{Bern}(0.25)$, $\mathrm{Beta}(1,3)$, and $\mathrm{Beta}(10,30)$ processes at level $0.95$. 
See the caption of Figure~\ref{fig:exps_0.5} for the details.}
\label{fig:exps_0.25}
\end{figure*}

We also visualized the confidence sequences of the algorithms studied in this paper and PRECiSE algorithms in Figs.~\ref{fig:exps_0.5} and \ref{fig:exps_0.25}.\footnote{We do not compare with the methods of \citet{Waudby-Smith--Ramdas2020b}, since our main focus is on developing an efficient, yet tight approximation of UP; instead, we refer an interested reader to the experiments of \citep{Orabona--Jun2021} for a comparison of \titrevision{PRECiSE algorithms} to \citep{Waudby-Smith--Ramdas2020b}.}
Similar to the experiment settings in \citep{Orabona--Jun2021}, and \citep{Waudby-Smith--Ramdas2020b} in turn, we considered \iid processes under $\mathrm{Bern}(0.5)$, $\mathrm{Beta}(1,1)$, $\mathrm{Beta}(10,10)$ (in Fig.~\ref{fig:exps_0.5}), $\mathrm{Bern}(0.25)$, $\mathrm{Beta}(1,3)$, and $\mathrm{Beta}(10,30)$ (in Fig.~\ref{fig:exps_0.25}) distributions.
Here, CB is a naive method based on an alternative odds vector construction $\xv_t\gets [1-Y_t+m, 1+Y_t-m]$, which can be viewed as running the continuous coss; cf.~\eqnref{eq:odds_vector} and see Table~\ref{tab:gambling_examples}. 
We include this simple baseline to highlight the effect of the form of gambling in constructing confidence sequences.
We used the $\mathrm{Beta}(b;\half,\half)$ prior for the UP and the UP component in the hybrid algorithms, and used $t_{\texttt{UP}}=50$ for the hybrid algorithm in~\eqnref{eq:hybrid_wealth}.

In all the cases, UP achieves the tightest confidence sequences as expected, as the others are based on lower bounds to UP.
While the HR algorithm is equivalent to UP for discrete sequences, it becomes loose as the variance decreases in (b) and (c).
Although the LBUP methods give vacuous bounds in the small sample regimes ($\sim$10 samples), but quickly approach to the behavior of UP as $t$ increases; this was also observed in Fig.~\ref{fig:evolution}.

In the second column, we plot the behavior of the confidence sequences at a finer scale, by showing the size of confidence sequences relative to that of UP.
We can see that LBUP with $n=1$ performs similar to PRECiSE at a large sample regime, and a clear improvement of LBUP with $n=2,3$ over them.
Note that LBUP with $n=2,3$ demonstrate even tighter confidence sequences compared to the PRECiSE algorithms.

Finally, we remark that the hybrid method (labeled with HybridUP) can achieve both advantages of UP and LBUP, \ie good small-sample behavior and linear time complexity, as advocated; see the second rows of (b) and (c) in Figs.~\ref{fig:exps_0.5} and \ref{fig:exps_0.25}.
The last column presents the elapsed time of each algorithm.
We remark that, \titrevision{in our experiments, CB, HR, LBUP, and HybridUP methods demonstrated $O(t^{\sim 0.8})$ time complexity, PRECiSE-A-CO96 took $O(t^{\sim 1.0})$, while UP and PRECiSE-CO96 algorithms took $O(t^{\sim 1.5})$ complexity;} 
here, the slopes of the lines correspond to the exponents in the log-log plot.

\section{Concluding Remarks}
\label{sec:conclusion}
In this paper, we established the properties of a class of universal-gambling-based algorithms for constructing confidence sequences, and also proposed a new algorithm which closely emulates the performance of the universal-portfolio-based method, with less time complexity.
Our argument is based on interpreting the existing gambling formalism as a (continuous) two-horse race.
We believe that this perspective is conceptually simpler and more intuitive, and thus allows the research community to translate and connect the results from information theory more easily.

One caveat in our experiments is that, as shown in the last columns of Figs.~\ref{fig:exps_0.5} and \ref{fig:exps_0.25}, the UP method takes less time up to about lengths $10^5$ in the current implementation, even though the LBUP and HybridUP methods incurs only linear complexity in principle. (Note also that the elapsed time for UP and PRECiSE are comparable.)
The computational bottleneck is in the numerical integration to compute the wealth lower bound in~\eqnref{eq:lbup_wealth}, as noted in Remark~\ref{rem:tradeoff}.
We expect that a specially designed numerical method for the specific integral of our interest may reduce the time complexity and lead to a faster-than-UP implementation.

An intriguing question is whether one can generalize the argument of this paper to construct a confidence sequence for high-dimensional random vectors. 
We leave this as a future research direction.


%

\appendices

\section{Technical Lemmas}

\subsection{Convexity}
\begin{lemma}\label{lem:log_convexity}
The function $m\mapsto m^{-x}(1-m)^{-(t-x)}$ is log-convex for any $t\ge 1$ and any $x\in[0,t]$.
\end{lemma}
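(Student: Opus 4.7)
The plan is direct: take the logarithm of the function and verify convexity by checking that its second derivative is nonnegative on $(0,1)$.

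First I would write
\[
\log\bigl(m^{-x}(1-m)^{-(t-x)}\bigr) = -x\log m - (t-x)\log(1-m),
\]
which is well-defined for $m\in(0,1)$. Call this function $f(m)$. Since log-convexity of the original function is equivalent to convexity of $f$, it suffices to show $f''(m)\ge 0$ throughout $(0,1)$.

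Next I would differentiate twice: $f'(m) = -x/m + (t-x)/(1-m)$ and $f''(m) = x/m^2 + (t-x)/(1-m)^2$. The hypothesis $x\in[0,t]$ gives $x\ge 0$ and $t-x\ge 0$, so each summand in $f''(m)$ is nonnegative on $(0,1)$. Therefore $f''(m)\ge 0$, which completes the argument.

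There is no real obstacle here; the only minor thing to note is that the endpoint cases $x=0$ or $x=t$ reduce to one of the two summands vanishing, so convexity still holds (and in fact the function becomes a power of $1-m$ or of $m$, each of which is plainly log-convex on $(0,1)$). No additional regularity assumptions are needed beyond $m\in(0,1)$, which is the domain on which log-convexity is being claimed.
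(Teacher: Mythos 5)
Your proof is correct and takes essentially the same route as the paper: the paper simply observes that $-\log m$ and $-\log(1-m)$ are convex and that a nonnegative combination of convex functions is convex, while you verify the same fact by computing $f''(m)=x/m^2+(t-x)/(1-m)^2\ge 0$ directly. No gap.
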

\begin{proof}
Both $m\mapsto-\log m$ and $m\mapsto -\log(1-m)$ are convex, so is their convex combination.
\end{proof}

\begin{lemma}
\label{lem:sum_preserves_log_convexity}
If $f$ and $g$ are log-convex, then so is $f+g$.
\end{lemma}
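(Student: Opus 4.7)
The plan is to prove this classical fact via Hölder's inequality applied pointwise. Fix two points $x,y$ in the common domain and $\lambda\in[0,1]$. Since $f$ and $g$ are log-convex, I first apply the definition of log-convexity separately to each:
\[
f(\lambda x+(1-\lambda)y)\le f(x)^\lambda f(y)^{1-\lambda},\qquad g(\lambda x+(1-\lambda)y)\le g(x)^\lambda g(y)^{1-\lambda}.
\]
Adding these, it remains to establish the key inequality
\[
f(x)^\lambda f(y)^{1-\lambda}+g(x)^\lambda g(y)^{1-\lambda}\le \bigl(f(x)+g(x)\bigr)^\lambda\bigl(f(y)+g(y)\bigr)^{1-\lambda}.
\]

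To prove this, I would invoke Hölder's inequality with conjugate exponents $p=1/\lambda$ and $q=1/(1-\lambda)$ on the two-term sum: with $a_1\defeq f(x)^\lambda$, $a_2\defeq g(x)^\lambda$, $b_1\defeq f(y)^{1-\lambda}$, $b_2\defeq g(y)^{1-\lambda}$, Hölder's inequality gives
\[
a_1 b_1+a_2 b_2\le (a_1^p+a_2^p)^{1/p}(b_1^q+b_2^q)^{1/q}=(f(x)+g(x))^\lambda (f(y)+g(y))^{1-\lambda},
\]
which is exactly the desired bound. Edge cases where some of $f(x),g(x),f(y),g(y)$ vanish are handled by the convention that log-convex functions are nonnegative (so both sides of the target inequality remain well-defined and the bound is trivially preserved), or alternatively by a limiting argument. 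Combining this with the pointwise log-convexity bounds above yields
\[
(f+g)(\lambda x+(1-\lambda)y)\le (f+g)(x)^\lambda (f+g)(y)^{1-\lambda},
\]
which is log-convexity of $f+g$.

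There is essentially no obstacle here; the entire proof reduces to recognizing the right-hand side as a Hölder bound. If one wanted to avoid citing Hölder, the same inequality can be derived from Young's inequality $uv\le u^p/p+v^q/q$ applied termwise after normalization, but invoking Hölder directly is cleanest. The proof is two or three lines once the Hölder step is identified.
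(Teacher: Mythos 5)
Your proof is correct and follows essentially the same route as the paper: both arguments combine the pointwise log-convexity bounds for $f$ and $g$ with H\"older's inequality on the two-term sum $f(x)^\lambda f(y)^{1-\lambda}+g(x)^\lambda g(y)^{1-\lambda}$. The only difference is presentational (you spell out the conjugate exponents and the edge cases, which the paper leaves implicit).
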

\begin{proof}
Let $f,g\suchthat \Xc\to\Real_{>0}$, and pick any $x,y\in\Xc$.
For any $\lambda\in(0,1)$, we have
\begin{align*}
&(f(x)+g(x))^{\lambda}(f(y)+g(y))^{1-\lambda}\\
&\stackrel{(a)}{\ge} f(x)^{\lambda}f(y)^{1-\lambda} + g(x)^{\lambda}g(y)^{1-\lambda}\\
&\stackrel{(b)}{\ge} f(\lambda x+(1-\lambda)y) + g(\lambda x+(1-\lambda)y).
\end{align*}
Here, (a) follows from H\"older's inequality and (b) follows from the log-convexity of $f$ and $g$.
\end{proof}

\subsection{A New Lower Bound on a Logarithmic Function}
\label{app:log_lower_bounds}
\titrevision{
We note that \citet{Fan--Grama--Liu2015} used the following lemma as a proof technique for its Lemma~4.1.
\begin{lemma}[{\citep[eq.~4.10]{Fan--Grama--Liu2015}}]
Define $f(t)\defeq\frac{\log(1+t)-t}{t^2/2}$ for $t>-1,t\neq 0$ and $f(0)\defeq-1$. Then, $t\mapsto f(t)$ is continuous and strictly increasing over $(-1,\infty)$.
\end{lemma}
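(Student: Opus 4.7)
The plan is to prove continuity and strict monotonicity separately. For continuity at $t=0$, I would invoke the Taylor expansion $\log(1+t) = t - t^2/2 + t^3/3 - \cdots$, valid on $(-1,1)$, which yields
\[
f(t) = \frac{2(\log(1+t) - t)}{t^2} = -1 + \frac{2t}{3} - \frac{t^2}{2} + O(t^3),
\]
so $\lim_{t\to 0} f(t) = -1 = f(0)$, giving continuity at the only potentially singular point; continuity on the remainder of $(-1,\infty)$ is immediate from smoothness of $\log(1+t)$ and $t^2$.

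For strict monotonicity, I would show $f'(t) > 0$ pointwise on $(-1,\infty)$ and invoke the mean value theorem. Setting $g(t) \defeq \log(1+t) - t$ so that $f(t) = 2g(t)/t^2$ and $g'(t) = -t/(1+t)$, the quotient rule gives $f'(t) = h(t)/t^3$ for $t \neq 0$, where
\[
h(t) \defeq 2t g'(t) - 4 g(t) = -\frac{2t^2}{1+t} + 4t - 4\log(1+t).
\]
The key computation is
\[
h'(t) = \frac{2t^2}{(1+t)^2},
\]
which I would verify by combining $\frac{d}{dt}\bigl(-2t^2/(1+t)\bigr) = -2t(t+2)/(1+t)^2$ with $\frac{d}{dt}\bigl(4t - 4\log(1+t)\bigr) = 4t/(1+t)$ and simplifying. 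Since $h'(t) \ge 0$ on $(-1,\infty)$ with equality only at $t=0$ and $h(0) = 0$, we get $\sgn(h(t)) = \sgn(t)$ for $t \neq 0$, hence $f'(t) = h(t)/t^3 > 0$ on $(-1,\infty)\setminus\{0\}$. The Taylor expansion additionally gives $f'(0) = 2/3 > 0$, so $f' > 0$ throughout $(-1,\infty)$ and $f$ is strictly increasing.

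The only step demanding real care is the derivative identity for $h'$: the cancellation $-4t - 2t^2 + 4t + 4t^2 = 2t^2$ inside the common denominator $(1+t)^2$ has to be tracked carefully to recognize the clean square form. Once that identity is in hand, the rest is routine sign analysis and the result follows.
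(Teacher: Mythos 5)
Your proof is correct and follows essentially the same route as the paper, which proves the general order-$\ell$ version (Lemma~\ref{lem:monotone}) and obtains this statement as the case $\ell=2$: your auxiliary function $h$ is exactly twice the paper's $g_2(t)=-t^2\bigl(f(t)+\frac{1}{1+t}\bigr)$, and your key identity $h'(t)=\frac{2t^2}{(1+t)^2}$ matches the paper's $g_\ell'(t)=\frac{t^\ell}{(1+t)^2}$. The only cosmetic differences are that you reach $h$ directly via the quotient rule and handle $t=0$ by Taylor expansion rather than L'H\^opital's rule.
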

The following lemma is its higher-order generalization, which subsumes the above lemma as a special case for $\ell=2$.}
\begin{lemma}
\label{lem:monotone}
For an integer $\ell\ge 1$, if we define
\[
f_\ell(t)\defeq
\begin{cases}
\displaystyle\frac{\log(1+t)-\sum_{k=1}^{\ell-1}-\frac{(-t)^k}{k}}{\frac{(-t)^\ell}{\ell}} & \text{if $t>-1$ and $t\neq 0$,}\\
-1 &\text{if $t=0$},
\end{cases}
\]
then $t\mapsto f_\ell(t)$ is continuous and strictly increasing over $(-1,\infty)$.
\end{lemma}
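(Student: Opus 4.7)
The plan is to obtain a clean integral representation of $f_\ell$ that makes both continuity and strict monotonicity transparent, bypassing any direct quotient-rule manipulation which becomes unwieldy.

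First I would rewrite the numerator of $f_\ell(t)$ as a remainder integral. Starting from the geometric-series identity
\[
\frac{1}{1+u} = \sum_{k=0}^{\ell-2}(-u)^k + \frac{(-u)^{\ell-1}}{1+u}
\]
valid for $u > -1$, integration from $0$ to $t$ yields
\[
\log(1+t) - \sum_{k=1}^{\ell-1}(-1)^{k+1}\frac{t^k}{k} = (-1)^{\ell-1}\int_0^t \frac{u^{\ell-1}}{1+u}\,du.
\]
Dividing by $(-1)^\ell t^\ell/\ell$ and performing the substitution $u = tv$ (valid for any $t \in (-1,\infty)\setminus\{0\}$; one checks that the denominator $1+tv > 0$ for $v \in [0,1]$ in both the $t>0$ and $-1<t<0$ regimes) gives the representation
\[
f_\ell(t) = -\ell \int_0^1 \frac{v^{\ell-1}}{1+tv}\,dv.
\]

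With this formula in hand the rest is immediate. Continuity at $t=0$ follows from dominated convergence (or by direct evaluation $-\ell\int_0^1 v^{\ell-1}dv = -1$), matching the definition $f_\ell(0) \defeq -1$, and continuity on the rest of $(-1,\infty)$ is clear because for any compact subinterval of $(-1,\infty)$ the integrand is uniformly bounded. For strict monotonicity I would differentiate under the integral sign (justified by uniform integrability on compact subintervals):
\[
f_\ell'(t) = \ell \int_0^1 \frac{v^\ell}{(1+tv)^2}\,dv > 0
\]
for all $t > -1$, since the integrand is nonnegative and strictly positive on $(0,1)$.

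The main obstacle is essentially bookkeeping: verifying that the substitution $u = tv$ produces the advertised formula on both sides of $0$, and then justifying the interchange of limit/derivative with the integral on $(-1,\infty)$. Both are standard, but it is worth writing them out carefully so that the singularity of $1/(1+u)$ at $u=-1$ does not cause any trouble near the left endpoint. Once the representation $f_\ell(t) = -\ell\int_0^1 v^{\ell-1}/(1+tv)\,dv$ is secured, the lemma is essentially free.
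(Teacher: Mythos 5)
Your proof is correct, and it takes a genuinely different route from the paper's. The paper proves the lemma by first establishing the algebraic identity $f_\ell'(t) = -\frac{\ell}{t}\bigl(f_\ell(t)+\frac{1}{1+t}\bigr)$, then reducing strict monotonicity to a sign analysis of the auxiliary function $g_\ell(t) = -t^\ell\bigl(f_\ell(t)+\frac{1}{1+t}\bigr)$, whose derivative is $t^\ell/(1+t)^2$; this forces a case split on the parity of $\ell$ and the sign of $t$, and continuity at $0$ is handled separately by L'H\^opital. Your integral-remainder representation
\[
f_\ell(t) = -\ell\int_0^1\frac{v^{\ell-1}}{1+tv}\,dv
\]
collapses all of this into one object: continuity at $t=0$ is just $-\ell\int_0^1 v^{\ell-1}\,dv=-1$, and strict positivity of $f_\ell'(t)=\ell\int_0^1 v^\ell(1+tv)^{-2}\,dv$ is manifest with no parity cases. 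I verified the key steps: the geometric-series remainder identity, the substitution $u=tv$ (and the bound $1+tv\ge\min(1,1+t)>0$ on $[0,1]$ for all $t>-1$, which keeps the integrand away from the singularity), and the differentiation under the integral sign, which is routine since the partial derivative is continuous and uniformly bounded for $t$ in any compact subset of $(-1,\infty)$. The two arguments encode the same underlying computation --- the paper's $g_\ell'(t)=t^\ell/(1+t)^2$ is the unsubstituted form of your integrand $v^\ell/(1+tv)^2$ --- but yours buys a uniform, case-free argument at the cost of invoking standard (if slightly heavier) integral machinery, whereas the paper's stays entirely within elementary differentiation of explicit formulas.
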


Before we prove this lemma, we first compute the derivative of the function $t\mapsto f_\ell(t)$.
\begin{lemma}
\label{lem:derivative}
For any integer $\ell\ge 1$,
\[
f_\ell'(t) = -\frac{\ell}{t}\Bigl(f_\ell(t) + \frac{1}{1+t}\Bigr).
\]
\end{lemma}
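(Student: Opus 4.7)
The plan is to compute $f_\ell'(t)$ by direct differentiation of the quotient $f_\ell(t) = N(t)/D(t)$, where
\[
N(t) \defeq \log(1+t) - \sum_{k=1}^{\ell-1}(-1)^{k+1}\frac{t^k}{k}, \qquad D(t) \defeq (-1)^\ell \frac{t^\ell}{\ell}.
\]
Rather than applying the quotient rule and wrestling with a sum, I would start from the identity $N(t) = f_\ell(t) D(t)$ and differentiate to obtain
\[
f_\ell'(t) = \frac{N'(t)}{D(t)} - f_\ell(t)\cdot \frac{D'(t)}{D(t)}.
\]
Since $D'(t) = (-1)^\ell t^{\ell-1}$, the logarithmic derivative is immediate: $D'(t)/D(t) = \ell/t$.

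The main step is to simplify $N'(t)$. Differentiating term-by-term gives
\[
N'(t) = \frac{1}{1+t} - \sum_{k=0}^{\ell-2}(-1)^{k} t^{k}.
\]
Here I would apply the finite geometric series identity
\[
\sum_{k=0}^{\ell-2}(-1)^{k} t^{k} = \frac{1-(-t)^{\ell-1}}{1+t} = \frac{1-(-1)^{\ell-1}t^{\ell-1}}{1+t},
\]
which telescopes the difference with $\frac{1}{1+t}$ into the compact form
\[
N'(t) = \frac{(-1)^{\ell-1} t^{\ell-1}}{1+t}.
\]
This is the only nontrivial algebraic step, and I expect it to be the main (though still mild) obstacle, since it requires recognizing the truncated geometric sum hidden inside the derivative of $N$.

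With this in hand, dividing by $D(t) = (-1)^\ell t^\ell/\ell$ gives $N'(t)/D(t) = -\ell/(t(1+t))$, so
\[
f_\ell'(t) = -\frac{\ell}{t(1+t)} - \frac{\ell}{t}\, f_\ell(t) = -\frac{\ell}{t}\Bigl(f_\ell(t) + \frac{1}{1+t}\Bigr),
\]
which is the claimed identity. The formula holds on $(-1,\infty)\setminus\{0\}$; no behavior at $t=0$ needs to be checked here since that is handled separately in the proof of Lemma~\ref{lem:monotone}.
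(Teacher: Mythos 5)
Your proof is correct and follows essentially the same route as the paper's: both reduce to differentiating the truncated logarithm and collapsing the resulting polynomial via the finite geometric series $\sum_{k=0}^{\ell-2}(-t)^k = \frac{1-(-t)^{\ell-1}}{1+t}$. Your bookkeeping through the identity $N(t)=f_\ell(t)D(t)$ is in fact slightly tidier than the paper's, which first expands $f_\ell(t)/\ell$ into negative powers of $t$, differentiates term by term, and then recombines using the same geometric-sum identity.
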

\begin{proof}
From the definition, we can write
\begin{align}
\frac{f_\ell(t)}{\ell} = \frac{\log(1+t)}{(-t)^{\ell}} + \sum_{k=1}^{\ell-1}(-1)^{\ell+k} \frac{t^{k-\ell}}{k}.
\nonumber
\end{align}
Hence, as derived in Table~\ref{tab:long_eq5}, we can compute its derivative
\begin{align*}
\frac{f_\ell'(t)}{\ell} 
&= -\frac{1}{t(1+t)} - \frac{f_\ell(t)}{t},
\end{align*}
which proves the claim.
In the derivation, (a) follows from
\[
\sum_{k=1}^{\ell-1}(-t)^{k-\ell-1} = \frac{(-t)^{-\ell}(1-(-t)^{\ell-1})}{1+t}=\frac{\frac{1}{t} + \frac{1}{(-t)^{\ell}}}{1+t}.\qedhere
\]
\end{proof}

\begin{table*}
\caption{Derivation of the derivative of $\frac{f_\ell(t)}{\ell}$ in Lemma~\ref{lem:identity}.}
\centering
\begin{minipage}{\textwidth}
\begin{align*}
\frac{f_\ell'(t)}{\ell} 
&= \frac{1}{(-t)^{\ell}(1+t)} 
+\ell \frac{\log(1+t)}{(-t)^{\ell+1}} +\sum_{k=1}^{\ell-1}(-1)^{\ell+k} (k-\ell)\frac{t^{k-\ell-1}}{k}\\
&\stackrel{(a)}{=} \frac{1}{(-t)^{\ell}(1+t)} 
+\frac{\ell}{(-t)^{\ell+1}}\Bigl\{\log(1+t) +\sum_{k=1}^{\ell-1}(-1)^{k+1} \frac{t^{k}}{k}\Bigr\}
+\frac{1}{1+t}\Bigl(-\frac{1}{t}-\frac{1}{(-t)^{\ell}}\Bigr)\\
&= -\frac{1}{t(1+t)} - \frac{f_\ell(t)}{t},
\end{align*}
\medskip
\hrule
\end{minipage}
\label{tab:long_eq5}
\end{table*}

\begin{proof}[Proof of Lemma~\ref{lem:monotone}]
The continuity readily follows from applying L'H\^opital's rule, since
\[
\lim_{t\to 0} f_\ell(t) = \lim_{t\to 0} \frac{-\frac{\ell t^{\ell-1}}{1+t}}{\ell t^{\ell-1}} = \lim_{t\to 0} -\frac{1}{1+t} = -1.
\]
From Lemma~\ref{lem:derivative}, 
it is clear that proving the monotonicity is equivalent to showing that
$g_\ell(t)\le 0$ for $t\le 0$ and even $\ell$'s and $g_\ell(t)\ge 0$ otherwise, where we define
\[
g_\ell(t)\defeq -t^{\ell} \Bigl(f_\ell(t)+\frac{1}{1+t}\Bigr).
\]
Note that it is easy to show that
\[
g_\ell'(t)=\frac{t^\ell}{(1+t)^2}.
\]
Now, $g_\ell'(t)\ge 0$ for $t\le 0$ and even $n$'s, and thus $g_\ell(t)\le g_\ell(0)=0$.
For $t\ge 0$, $g_\ell'(t)\ge 0$ and thus $g_\ell(t)\ge g_\ell(0)=0$.
For $t\le 0$ and odd $n$'s, $g_\ell'(t)\le 0$ and thus $g_\ell(t)\ge g_\ell(0)=0$. This concludes the proof.
\end{proof}

\section{Deferred Proofs}
\label{app:proofs}

\subsection{Proof of Theorem~\ref{thm:equivalent_condition_achievable_wealth}}

\begin{proof}%
\titrevision{
Let $\bb_t\suchthat\Mc^{t-1}\to \Delta_{K-1}$ be any betting strategy such that 
\[
(\bb_t(\xb^{t-1}))_j \ge \frac{1}{o_j} \frac{\Psi_t(\xb^t)|_{\xb_t=o_j\eb_j}}{\Psi_{t-1}(\xb^{t-1})}
\numberthis\label{eq:wealth_to_betting}
\]
for every $t\ge 1$, every $j\in[K]$, and any $\xb^{t-1}\in\Mc^{t-1}$.
Note that, by the assumption (A1), there always exists such a betting strategy.
We prove by induction. Assume that $\wealth_{t-1}(\xb^{t-1})\ge \wealth_0 \Psi_{t-1}(\xb^{t-1})$ (induction hypothesis). Then, consider
\begin{align*}
\frac{\wealth_t}{\wealth_0}
&\stackrel{(a)}{\ge} \langle \bb_t,\xb_t\rangle \Psi_{t-1}(\xb^{t-1})\\
&\stackrel{(b)}{\ge} \sum_{j=1}^K \frac{x_{tj}}{o_j} \Psi_t(\xb^t)|_{\xb_t=o_j\eb_j} \\
&\stackrel{(c)}{\ge} \Psi_t(\xb^t).
\end{align*}
Here, (a) follows from the induction hypothesis, (b) follows from \eqnref{eq:wealth_to_betting}, and (c) follows from (A2).
}

\titrevision{For the converse, first note that
\[
\langle \bb_t,\xb_t\rangle 
=\frac{\Psi_t(\xb^{t-1}\xb_t)}{\Psi_{t-1}(\xb^{t-1})}
\numberthis\label{eq:multiplicative_game_defn_alt}
\]
for any $\xv_t\in\Mc$ by the definition of the multiplicative game.
Then, we have
\[
\frac{\sum_{j=1}^K 
\frac{1}{o_j}
\Psi_t(\xb^t)|_{\xb_t=o_j\eb_j}}{\Psi_{t-1}(\xb^{t-1})}
=\sum_{j=1}^K \frac{1}{o_j} \langle \bb_t,o_j\eb_j\rangle
=\langle \bb_t,\ones\rangle
=1,
\]
which is the condition in (A1) with equality. To verify that (A2) holds, consider
\begin{align*}
\sum_{j=1}^K \frac{x_{tj}}{o_j} \Psi_t(\xb^t)|_{\xb_t=o_j\eb_j} 
&= \Psi_{t-1}(\xb^{t-1}) \sum_{j=1}^K \frac{x_{tj}}{o_j} \frac{\Psi_t(\xb^t)|_{\xb_t=o_j\eb_j}}{\Psi_{t-1}(\xb^{t-1})} \\
&\stackrel{(d)}{=} \Psi_{t-1}(\xb^{t-1}) \sum_{j=1}^K \frac{x_{tj}}{o_j} 
\langle \bb_t, o_j\eb_j\rangle \\
&= \Psi_{t-1}(\xb^{t-1}) 
\langle \bb_t, \xb\rangle
= \Psi_t(\xb^t).
\end{align*}
Here, (d) follows from \eqnref{eq:multiplicative_game_defn_alt}. This concludes the proof.}
\end{proof}

\subsection{Proof of Corollary~\ref{cor:hr_interval}}
\begin{proof}
First, we can rewrite the equation $\tilde{\phi}_t(S_t; \frac{1}{\mu},\frac{1}{1-\mu})=\frac{1}{\d}$ as $d(\frac{S_t}{t}~\|~\mu)
=g_t(S_t;\d)$.
Since we have $d(p~\|~q)\ge 2(p-q)^2$ by Pinsker's inequality,
it readily follows that
\[
(\mu_t^{\low}(x;\d),\mu_t^{\up}(x;\d))
\subset \Bigl(\frac{S_t}{t} - \sqrt{\frac{g_t(S_t;\d)}{2}},
\frac{S_t}{t} + \sqrt{\frac{g_t(S_t;\d)}{2}} \Bigr),
\]
and the desired inequality follows from Theorem~\ref{thm:hr_interval}.
\end{proof}

\subsection{Proof of Lemma~\ref{lem:small_wealth}}
\begin{proof}
Note that, from \eqnref{eq:wealth_const_betting}, we can write
\[
\wealth_t^b\Bigl(y^t;\frac{1}{\hat{\mu}_t},\frac{1}{1-\hat{\mu}_t}\Bigr)=\sum_{k=0}^t b^k(1-b)^{t-k} \frac{y^t(k)}{\hat{\mu}_t^k(1-\hat{\mu}_t)^{t-k}}.
\]
Hence, to conclude the desired inequality, it suffices to show that the function $b\mapsto \wealth_t^b(y^t;\frac{1}{\hat{\mu}_t},\frac{1}{1-\hat{\mu}_t})$ is maximized at $b=\hat{\mu}_t$, since 
\begin{align*}
\wealth_t^{b=\hat{\mu}_t}\Bigl(y^t;\frac{1}{\hat{\mu}_t},\frac{1}{1-\hat{\mu}_t}\Bigr)
&=\sum_{k=0}^t y^t(k)\\
&= \sum_{z^t\in\{0,1\}^t} \prod_{i=1}^t y_i^{z_i}(1-y_i)^{1-z_i} \\
&= \prod_{i=1}^t (y_i + (1-y_i)) = 1.
\end{align*}
Since $b\mapsto b^k(1-b)^{t-k}$ is log-concave (Lemma~\ref{lem:log_convexity}) and a sum of any log-concave functions is also log-concave (Lemma~\ref{lem:sum_preserves_log_convexity}), so is the function $b\mapsto \wealth_t^b(y^t;\frac{1}{\hat{\mu}_t},\frac{1}{1-\hat{\mu}_t})$.
Hence, we only need to show that the derivative of the function with respect to $b$ at $b=\hat{\mu}_t$ is zero.
Indeed, we have
\begin{align*}
&\frac{\partial}{\partial b}\wealth_t^b\Bigl(y^t;\frac{1}{\hat{\mu}_t},\frac{1}{1-\hat{\mu}_t}\Bigr)\Big|_{b=\hat{\mu}_t}\\
&= \frac{\sum_{k=0}^t ky^t(k)}{\hat{\mu}_t} - \frac{\sum_{k=0}^t (t-k)y^t(k)}{1-\hat{\mu}_t}
=0,
\end{align*}
by Lemma~\ref{lem:identity} stated below.
\end{proof}

\begin{lemma}
\label{lem:identity}
For any $y^t\in\Real^t$ and $0\le k\le t$, define  
\[
y^t(k)\defeq \sum_{z^t\in\{0,1\}^t\suchthat k(z^t)=k} \prod_{i=1}^t y_i^{z_i} (1-y_i)^{1-z_i}
\]
as in \eqnref{eq:seq_k_statistics}.
Then, 
we have $\sum_{k=0}^t y^t(k)=t$ and
\[
\sum_{k=0}^t ky^t(k)=\sum_{i=1}^t y_i.
\]
\end{lemma}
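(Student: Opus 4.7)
The plan is to prove both identities by interchanging the order of summation and then applying the distributive law. The key observation is that partitioning the sum over $z^t \in \{0,1\}^t$ according to the Hamming weight $k(z^t) = \sum_i z_i$ is the same thing as summing over all $z^t$, so $y^t(k)$ behaves like the marginal of the "tensor" $\prod_i y_i^{z_i}(1-y_i)^{1-z_i}$ grouped by weight. Once this reindexing is done, everything factors.

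For the first identity I would write
\[
\sum_{k=0}^t y^t(k) \;=\; \sum_{k=0}^t \sum_{z^t : k(z^t)=k} \prod_{i=1}^t y_i^{z_i}(1-y_i)^{1-z_i} \;=\; \sum_{z^t \in \{0,1\}^t} \prod_{i=1}^t y_i^{z_i}(1-y_i)^{1-z_i},
\]
and then distribute to get $\prod_{i=1}^t (y_i + (1-y_i)) = 1$. (The displayed value $t$ in the lemma statement appears to be a typographical error: the proof of Lemma~\ref{lem:small_wealth} explicitly uses this sum being $1$, which is what the computation just given yields.)

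For the second identity the same reindexing gives
\[
\sum_{k=0}^t k\, y^t(k) \;=\; \sum_{z^t \in \{0,1\}^t} k(z^t)\prod_{i=1}^t y_i^{z_i}(1-y_i)^{1-z_i} \;=\; \sum_{i=1}^t \sum_{z^t \in \{0,1\}^t} z_i \prod_{j=1}^t y_j^{z_j}(1-y_j)^{1-z_j},
\]
after substituting $k(z^t) = \sum_i z_i$ and swapping. For fixed $i$, the factor $z_i\, y_i^{z_i}(1-y_i)^{1-z_i}$ vanishes when $z_i=0$ and equals $y_i$ when $z_i=1$, so summing the remaining coordinates $z_j$ for $j \neq i$ by distributivity yields $y_i \prod_{j\neq i}(y_j + (1-y_j)) = y_i$. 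Summing over $i$ finishes the identity.

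No real obstacle is expected: both claims are purely algebraic once the reindexing by Hamming weight is made. The only thing worth flagging explicitly in the write-up is the typo noted above; the substantive content of the lemma is the second identity, which is what gets used in the proof of Lemma~\ref{lem:small_wealth}.
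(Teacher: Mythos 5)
Your proof is correct, and it takes a genuinely different and more elementary route than the paper's. The paper first establishes the representation $y^t(k)= \sum_{\ell=k}^t (-1)^{\ell+k} \binom{\ell}{k} q_\ell(y^t)$ in terms of the elementary symmetric polynomials $q_\ell(y^t)\defeq \sum_{1\le i_1<\ldots<i_\ell\le t} y_{i_1}\cdots y_{i_\ell}$ (asserted to follow from the recursion \eqref{eq:up_recursive_update}), and then evaluates $\sum_k k\,y^t(k)$ by exchanging the order of summation and invoking the alternating binomial identity $\sum_{j=0}^n(-1)^j\binom{n}{j}=0$. Your argument bypasses this intermediate representation entirely: you unfold the stratification by Hamming weight back into a single sum over $z^t\in\{0,1\}^t$, substitute $k(z^t)=\sum_i z_i$, and let the product factor coordinatewise, so that each fixed $i$ contributes exactly $y_i$. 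This is shorter, avoids the unproven ``easy to show'' step, and as a bonus actually proves the first identity (which the paper's proof silently omits). You are also right that the stated value $t$ for $\sum_{k=0}^t y^t(k)$ is a typo for $1$ --- the proof of Lemma~\ref{lem:small_wealth} computes this very sum as $\prod_{i=1}^t(y_i+(1-y_i))=1$, consistent with your calculation.
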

\begin{proof}
Let $q_\ell(y^t)\defeq \sum_{1\le i_1<\ldots<i_\ell\le t} y_{i_1}\ldots y_{i_\ell}$ for $0\le \ell \le t$.
Then, from the recursive equation~\eqnref{eq:up_recursive_update}, it is easy to show that
\[
y^t(k)= \sum_{\ell=k}^t (-1)^{\ell+k} \binom{\ell}{k} q_\ell(y^t).
\]
With this expression, we then have
\begin{align*}
\sum_{k=0}^t ky^t(k)
&= \sum_{k=0}^t \sum_{\ell=k}^t (-1)^{\ell+k} \binom{\ell}{k} q_\ell(y^t)\\
&= \sum_{\ell=0}^t (-1)^\ell q_\ell(y^t) \sum_{k=0}^\ell (-1)^{k} k\binom{\ell}{k}\\
&\stackrel{(a)}{=} \sum_{\ell=0}^t (-1)^\ell \ell q_\ell(y^t) \sum_{k=1}^\ell (-1)^{k} \binom{\ell-1}{k-1}\\
&= q_1(y^t)=\sum_{i=1}^t y_i,
\end{align*}
which is the desired relation.
Here, (a) follows from the identity that $\sum_{j=0}^n (-1)^j \binom{n}{j}=0$ for any $n\ge 1$.
\end{proof}

\subsection{Proof of Lemma~\ref{lem:generalized_lower_bound}}
\begin{proof}
We first note that we only need to show the first case where $b\in[m,1)$ and $y\ge 0$, since the second case follows by plugging in $b\gets 1-b$, $m\gets 1-m$, $y\gets 1-y$ into the first inequality.
Now, by Lemma~\ref{lem:monotone}, we have
\begin{align*}
&\log(1+x)\\
&\ge \sum_{k=1}^{2n-1} (-1)^{k+1} \frac{x^k}{k} + \frac{x^{2n}}{x_0^{2n}} \Bigl(
\log(1+x_0) - \sum_{k=1}^{2n-1} (-1)^{k+1} \frac{x_0^k}{k}\Bigr)\\
&= \sum_{k=1}^{2n-1} \frac{(\frac{x}{x_0})^{2n} (-x_0)^k -(-x)^k}{k} + 
\Bigl(\frac{x}{x_0}\Bigr)^{2n} \log(1+x_0)
\end{align*}
for any $x\ge x_0 > -1$.
Let $g_m(b,y)=b\frac{y}{m} + (1-b)\frac{1-y}{1-m} -1$. Then, since $m\le b<1$, we have $\frac{1-b}{1-m}\le 1\le \frac{b}{m}$, and thus $g_m(b,y)\ge g_m(b,0)=\frac{1-b}{1-m}-1=\frac{m-b}{1-m}$.
The desired inequality follows by setting $x\gets g_m(b,y)$, $x_0\gets g_m(b,0)$, and observing that
\[
\Bigl(\frac{x}{x_0}\Bigr)^2
=\Bigl(\frac{g_m(b,y)}{g_m(b,0)}\Bigr)^2
=\Bigl(1-\frac{y}{m}\Bigr)^2
\]
and
\begin{align*}
-g_m(b,y)
&= 1-b\frac{y}{m}-(1-b)\frac{1-y}{1-m} \\
&= \Bigl(1-\frac{1-b}{1-m}\Bigr) \Bigl(1-\frac{y}{m}\Bigr).\qedhere
\end{align*}
\end{proof}

\section{KT Strategy for Continuous Two-Horse Race}
\label{app:kt_strategy}
Consider a gambling with odds vector $\xb_t=[o_1\ct_t,o_0(1-\ct_t)]$ for $\ct_t\in[0,1]$ given $o_1,o_0>0$.
\titrevision{Note that the following statement is stated in parallel to Theorem~\ref{thm:equivalent_condition_achievable_wealth}.}

\begin{theorem}
\label{thm:coin_betting_potential}
Let $(\psi_t\suchthat[0,t-1]\to\Real_+)$ be a sequence of nonnegative potential functions that satisfies the following condition:
\begin{itemize}
\item [(A1')] (consistency) For any $0\le x\le t-1$,
\[\psi_{t-1}(x)\ge \frac{1}{o_1} \psi_t(x+1) + \frac{1}{o_0}\psi_t(x).\]
\item [(A2')] (convexity) For any $0\le x\le t-1$,
\[\ct_t\mapsto \psi_t(x+\ct_t)\] is convex.
\end{itemize}
For each $t\ge 1$, define
\[
b_t(x) \defeq \frac{\frac{1}{o_1} \psi_t(x+1)}{\frac{1}{o_1} \psi_t(x+1) + \frac{1}{o_0} \psi_t(x)}.
\numberthis\label{eq:generalized_betting}
\]
Then, the betting strategy $b_t(\sum_{i=1}^{t-1} \ct_i)$ satisfies
\[
\wealth_t \ge \wealth_0 \psi_t\Bigl(\sum_{i=1}^t \ct_i \Bigr).
\]
\end{theorem}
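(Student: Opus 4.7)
The plan is induction on $t$, in the same spirit as the proof of Theorem~\ref{thm:equivalent_condition_achievable_wealth}. For the base case $t=0$, I would have $\wealth_0 \ge \wealth_0 \psi_0(0)$ under the natural normalization $\psi_0(0)\le 1$ (implicit in any use of such a potential). For the inductive step, I would assume $\wealth_{t-1}\ge \wealth_0\,\psi_{t-1}(S_{t-1})$ with $S_{t-1}\defeq\sum_{i=1}^{t-1}\ct_i$, and write $b\defeq b_t(S_{t-1})$ together with the denominator $D\defeq\frac{1}{o_1}\psi_t(S_{t-1}+1)+\frac{1}{o_0}\psi_t(S_{t-1})$ appearing in \eqref{eq:generalized_betting}.

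The first step is to observe that the per-round multiplicative gain collapses, by direct substitution of \eqref{eq:generalized_betting}, into
\[
o_1\ct_t b + o_0(1-\ct_t)(1-b) \;=\; \frac{\ct_t\,\psi_t(S_{t-1}+1) + (1-\ct_t)\,\psi_t(S_{t-1})}{D}.
\]
Combining this with the induction hypothesis gives
\[
\wealth_t \;\ge\; \wealth_0\,\frac{\psi_{t-1}(S_{t-1})}{D}\,\bigl(\ct_t\,\psi_t(S_{t-1}+1)+(1-\ct_t)\,\psi_t(S_{t-1})\bigr).
\]
Now assumption (A1') is exactly the statement $\psi_{t-1}(S_{t-1})\ge D$, which cancels the prefactor, and assumption (A2'), which by convexity of $\ct_t\mapsto\psi_t(S_{t-1}+\ct_t)$ on $[0,1]$ gives $\psi_t(S_{t-1}+\ct_t)\le (1-\ct_t)\psi_t(S_{t-1})+\ct_t\psi_t(S_{t-1}+1)$, converts the remaining linear interpolation into $\psi_t(S_t)$, completing the induction.

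The proof is essentially bookkeeping and I do not anticipate any genuine obstacle; the statement is the continuous two-horse-race counterpart of Theorem~\ref{thm:equivalent_condition_achievable_wealth} for $K=2$, and the definition of $b_t$ in \eqref{eq:generalized_betting} is the unique choice that makes the multiplicative gain factor through $D$. The mild subtlety worth flagging is that, unlike in the discrete horse race, the convexity condition (A2') is genuinely needed: since $S_t=S_{t-1}+\ct_t$ lands at a non-integer in general, the lower bound $\psi_t(S_t)$ has to be reached by convex interpolation between the endpoint values $\psi_t(S_{t-1})$ and $\psi_t(S_{t-1}+1)$, rather than landing on one of them as in the $\{0,1\}$-valued case of Theorem~\ref{thm:equivalent_condition_achievable_wealth}.
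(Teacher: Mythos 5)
Your proposal is correct and follows essentially the same route as the paper's proof: induction on $t$, substitution of \eqref{eq:generalized_betting} so that the multiplicative gain times the denominator $D$ equals the linear interpolation $\ct_t\psi_t(S_{t-1}+1)+(1-\ct_t)\psi_t(S_{t-1})$, then (A1') to absorb the prefactor and (A2') to convert the interpolation into $\psi_t(S_t)$. The only cosmetic difference is that you divide by $D$ and use $\psi_{t-1}(S_{t-1})/D\ge 1$, whereas the paper lower-bounds $\psi_{t-1}(S_{t-1})$ by $D$ before multiplying out; your remark that (A2') is what distinguishes the continuous case from the $\{0,1\}$-valued one matches the paper's intent exactly.
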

\begin{proof}
We prove by induction.
Assume that
\[
\wealth_{t-1} \ge \wealth_0 \psi_{t-1}(x_{t-1}),
\]
where we denote $x_{t-1}\defeq \sum_{i=1}^{t-1} \ct_i$.
Then, we have
\begin{align*}
&\frac{\wealth_t}{\wealth_0} \\
&= (o_1\ct_t b_t + o_0(1-\ct_t)(1-b_t)) \frac{\wealth_{t-1}}{\wealth_0}\\
&\ge (o_1\ct_t b_t + o_0(1-\ct_t)(1-b_t))  \psi_{t-1}\Bigl(\sum_{i=1}^{t-1}\ct_i\Bigr)\\
&\ge (o_1\ct_t b_t + o_0(1-\ct_t)(1-b_t))  \Bigl(\frac{\psi_t(x_{t-1}+1)}{o_1} + \frac{\psi_t(x_{t-1})}{o_0}\Bigr)\\
&= \ct_t\psi_t(x_{t-1}+1) + (1-\ct_t)\psi_t(x_{t-1})\\
&\ge \psi_t(x_{t-1}+\ct_t)
=\psi_t(x_t).
\end{align*}
This concludes the proof by induction.
\end{proof}

\begin{corollary}
Recall the mixture potential defined in \eqnref{eq:defn_asymmetric_potential}:
\begin{align*}
\tilde{\phi}_t(x;o_1,o_2) \defeq o_1^{x}o_0^{t-x} \frac{B(x+\half, t-x+\half)}{B(\half,\half)}.
\end{align*}
Then, the wealth of the betting strategy
\[
\bb_t(\ct^{t-1})=\Bigl[b_t\Bigl(\sum_{i=1}^{t-1} \ct_i\Bigr),1-b_t\Bigl(\sum_{i=1}^{t-1} \ct_i\Bigr)\Bigr]
\]
with
\begin{align*}
b_t(x) \defeq \frac{1}{t}\Bigl(x + \half\Bigr)
\end{align*}
for $x=x_{t-1}\in [0,t-1]$, 
is lower bounded by 
\[
\wealth_0\tilde{\phi}_t\Bigl(\sum_{i=1}^{t-1}\ct_i;o_1,o_2\Bigr).
\]
\end{corollary}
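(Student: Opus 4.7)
The plan is to apply Theorem~\ref{thm:coin_betting_potential} with the choice $\psi_t(x) \defeq \tilde{\phi}_t(x;o_1,o_0)$. This requires three things: verifying the consistency condition (A1'), verifying the convexity condition (A2'), and then showing that the betting rule prescribed by \eqref{eq:generalized_betting} simplifies to $b_t(x) = (x+\tfrac{1}{2})/t$.

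For (A1'), I would rely on the standard beta-function identity $B(a+1,b) + B(a,b+1) = B(a,b)$, which follows immediately from $B(a+1,b) = \tfrac{a}{a+b}B(a,b)$ and $B(a,b+1) = \tfrac{b}{a+b}B(a,b)$. Applying this with $a = x+\tfrac{1}{2}$ and $b = t-x-\tfrac{1}{2}$ (so that $a+b = t$) yields
\[
B\bigl(x+\tfrac{3}{2},\,t-x-\tfrac{1}{2}\bigr) + B\bigl(x+\tfrac{1}{2},\,t-x+\tfrac{1}{2}\bigr) = B\bigl(x+\tfrac{1}{2},\,t-x-\tfrac{1}{2}\bigr).
\]
Multiplying by $o_1^x o_0^{t-x-1}/B(\tfrac{1}{2},\tfrac{1}{2})$ and recognizing the right-hand side as $\tilde{\phi}_{t-1}(x;o_1,o_0)$ shows that (A1') holds with equality.

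For (A2'), it suffices to show that $y \mapsto \tilde{\phi}_t(y;o_1,o_0)$ is convex on $[0,t]$. I would argue this via log-convexity: writing
\[
\log\tilde{\phi}_t(y) = y\log o_1 + (t-y)\log o_0 + \log\Gamma(y+\tfrac{1}{2}) + \log\Gamma(t-y+\tfrac{1}{2}) - \log\Gamma(t+1) - \log B(\tfrac{1}{2},\tfrac{1}{2}),
\]
the linear terms contribute zero second derivative, while the two log-gamma terms each contribute the trigamma function $\psi^{(1)}$, which is strictly positive on $(0,\infty)$. Hence $\log\tilde{\phi}_t$ is convex, so $\tilde{\phi}_t$ is log-convex and in particular convex.

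Finally, for the betting rule, I would substitute $\psi_t = \tilde{\phi}_t$ into \eqref{eq:generalized_betting}. The denominator was already identified in the (A1') step as $\tilde{\phi}_{t-1}(x;o_1,o_0)$, so the $o_1,o_0$-dependent prefactors and the constant $B(\tfrac{1}{2},\tfrac{1}{2})$ cancel, leaving
\[
b_t(x) = \frac{B(x+\tfrac{3}{2},\,t-x-\tfrac{1}{2})}{B(x+\tfrac{1}{2},\,t-x-\tfrac{1}{2})} = \frac{\Gamma(x+\tfrac{3}{2})/\Gamma(t+1)}{\Gamma(x+\tfrac{1}{2})/\Gamma(t)} = \frac{x+\tfrac{1}{2}}{t},
\]
using $\Gamma(x+\tfrac{3}{2}) = (x+\tfrac{1}{2})\Gamma(x+\tfrac{1}{2})$ and $\Gamma(t+1) = t\Gamma(t)$. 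This matches the claimed KT-type strategy, and the wealth bound then follows directly from Theorem~\ref{thm:coin_betting_potential}. The only mildly technical step is the beta-function manipulation for (A1'); the rest is routine bookkeeping and the observation that log-gamma is convex.
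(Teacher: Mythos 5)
Your proposal is correct and follows essentially the same route as the paper's own proof: verify (A1$'$) with equality via the beta-function recurrence, establish (A2$'$) through log-convexity of $x\mapsto\tilde{\phi}_t(x;o_1,o_0)$, simplify the induced betting rule \eqref{eq:generalized_betting} to $(x+\tfrac{1}{2})/t$, and invoke Theorem~\ref{thm:coin_betting_potential}. The paper states these verifications as ``easy to check''; you have simply supplied the details (the identity $B(a+1,b)+B(a,b+1)=B(a,b)$ and the trigamma argument), all of which are correct.
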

\begin{proof}
It is easy to check that the potential satisfies both conditions (A1') and (A2').
Indeed, (A1') holds with equality, and (A2') follows as a corollary by the logarithmic convexity of the mapping $x\mapsto \psi_t(x)$ over $[0,t]$.
Therefore, the induced betting strategy defined in \eqnref{eq:generalized_betting} achieves the wealth at least the coin betting potential $\psi_t(x_t)$.
Here, note that, for any $o_1,o_0>0$, 
\begin{align*}
b_t(x) &= \frac{\frac{1}{o_1} \psi_t(x+1)}{\frac{1}{o_1} \psi_t(x+1) + \frac{1}{o_0} \psi_t(x)}
= \frac{1}{t}\Bigl(x + \half\Bigr)
\end{align*}
for $x=x_{t-1}\in [0,t-1]$, which is equivalent to the KT strategy for the standard even-odds case $o_1=o_0$.
Invoking Theorem~\ref{thm:coin_betting_potential} concludes the proof.
\end{proof}

\section*{Acknowledgment}
The authors appreciate F. Orabona, K.-S. Jun, and A. Ramdas for providing constructive comments on an earlier version of this manuscript.
The authors also are grateful to Y.-H. Kim for his comments on an earlier draft and his support on this research.


\ifCLASSOPTIONcaptionsoff
  \newpage
\fi



%
\bibliographystyle{plainnat}
\bibliography{ref}

\newcommand{\noopsort}[1]{}
\begin{thebibliography}{33}
\providecommand{\natexlab}[1]{#1}
\providecommand{\url}[1]{\texttt{#1}}
\expandafter\ifx\csname urlstyle\endcsname\relax
  \providecommand{\doi}[1]{doi: #1}\else
  \providecommand{\doi}{doi: \begingroup \urlstyle{rm}\Url}\fi

\bibitem[Cover(1966)]{Cover1966}
Thomas~M Cover.
\newblock Behavior of sequential predictors of binary sequences.
\newblock In \emph{Trans. Fourth Prague Conf. Inf. Theory}, September 1966.

\bibitem[Cover(1991)]{Cover1991}
Thomas~M Cover.
\newblock Universal portfolios.
\newblock \emph{Math. Financ.}, 1\penalty0 (1):\penalty0 1--29, 1991.

\bibitem[Cover and Ordentlich(1996)]{Cover--Ordentlich1996}
Thomas~M Cover and Erik Ordentlich.
\newblock Universal portfolios with side information.
\newblock \emph{{IEEE} Trans. Inf. Theory}, 42\penalty0 (2):\penalty0 348--363, 1996.

\bibitem[Cover and Thomas(2006)]{Cover--Thomas2006}
Thomas~M Cover and Joy~A. Thomas.
\newblock \emph{Elements of information theory}.
\newblock John Wiley \& Sons, 2006.

\bibitem[Darling and Robbins(1967)]{Darling--Robbins1967}
Donald~A Darling and Herbert Robbins.
\newblock Confidence sequences for mean, variance, and median.
\newblock \emph{Proc. Natl. Acad. Sci. U. S. A.}, 58\penalty0 (1):\penalty0 66, 1967.

\bibitem[Fan et~al.(2015)Fan, Grama, and Liu]{Fan--Grama--Liu2015}
Xiequan Fan, Ion Grama, and Quansheng Liu.
\newblock Exponential inequalities for martingales with applications.
\newblock \emph{Electron. J. Probab.}, 20:\penalty0 1--22, 2015.

\bibitem[Gr{\"u}nwald and Roos(2019)]{Grunwald--Roos2019}
Peter Gr{\"u}nwald and Teemu Roos.
\newblock Minimum description length revisited.
\newblock \emph{Int. J. Ind. Math.}, 11\penalty0 (01):\penalty0 1930001, 2019.

\bibitem[Gr{\"u}nwald et~al.(2020)Gr{\"u}nwald, de~Heide, and Koolen]{Grunwald--de-Heide--Koolen2020}
Peter Gr{\"u}nwald, Rianne de~Heide, and Wouter~M Koolen.
\newblock Safe testing.
\newblock In \emph{Proc. {UCSD} Inf. Theory Appl. Workshop}, February 2020.

\bibitem[Gr{\"u}nwald(2007)]{Grunwald2007}
Peter~D Gr{\"u}nwald.
\newblock \emph{The minimum description length principle}.
\newblock MIT press, 2007.

\bibitem[Hendriks(2021)]{Hendriks2021}
Harrie Hendriks.
\newblock Test martingales for bounded random variables.
\newblock \emph{arXiv preprint arXiv:1801.09418, arXiv:2109.08923}, 2021.

\bibitem[Hoeffding(1963)]{Hoeffding1963}
Wassily Hoeffding.
\newblock Probability inequalities for sums of bounded random variables.
\newblock \emph{J. Am. Stat. Assoc.}, 58\penalty0 (301):\penalty0 13--30, 1963.

\bibitem[Howard et~al.(2021)Howard, Ramdas, McAuliffe, and Sekhon]{Howard--Ramdas--McAuliffe--Sekhon2021}
Steven~R Howard, Aaditya Ramdas, Jon McAuliffe, and Jasjeet Sekhon.
\newblock Time-uniform, nonparametric, nonasymptotic confidence sequences.
\newblock \emph{Ann. Statist.}, 49\penalty0 (2):\penalty0 1055--1080, 2021.

\bibitem[Jun and Orabona(2019)]{Jun--Orabona2019}
Kwang-Sung Jun and Francesco Orabona.
\newblock Parameter-free online convex optimization with sub-exponential noise.
\newblock In \emph{Conf. Learn. Theory}, pages 1802--1823. PMLR, 2019.

\bibitem[Kelly(1956)]{Kelly1956}
John~L Kelly.
\newblock A new interpretation of information rate.
\newblock \emph{Bell Syst. Tech. J.}, 35\penalty0 (4):\penalty0 917--926, 1956.

\bibitem[Krichevsky and Trofimov(1981)]{Krichevsky--Trofimov1981}
Raphail Krichevsky and Victor Trofimov.
\newblock The performance of universal encoding.
\newblock \emph{{IEEE} Trans. Inf. Theory}, 27\penalty0 (2):\penalty0 199--207, 1981.

\bibitem[Lai(1976)]{Lai1976}
Tze~Leung Lai.
\newblock On confidence sequences.
\newblock \emph{Ann. Statist.}, 4\penalty0 (2):\penalty0 265--280, 1976.

\bibitem[Luo et~al.(2018)Luo, Wei, and Zheng]{Luo--Wei--Zheng2018}
Haipeng Luo, Chen-Yu Wei, and Kai Zheng.
\newblock Efficient online portfolio with logarithmic regret.
\newblock In \emph{Adv. Neural Inf. Proc. Syst.}, volume~31, 2018.

\bibitem[Mhammedi and Rakhlin(2022)]{Mhammedi--Rakhlin2022}
Zakaria Mhammedi and Alexander Rakhlin.
\newblock Damped online {N}ewton step for portfolio selection.
\newblock \emph{arXiv preprint arXiv:2202.07574}, 2022.

\bibitem[Orabona and Jun(2024)]{Orabona--Jun2021}
Francesco Orabona and Kwang-Sung Jun.
\newblock Tight concentrations and confidence sequences from the regret of universal portfolio.
\newblock \emph{{IEEE} Trans. Inf. Theory}, 70\penalty0 (1):\penalty0 436--455, 2024.
\newblock \doi{10.1109/TIT.2023.3330187}.
\newblock arXiv:2110.14099.

\bibitem[Orabona and P{\'a}l(2016)]{Orabona--Pal2016}
Francesco Orabona and D{\'a}vid P{\'a}l.
\newblock Coin betting and parameter-free online learning.
\newblock In \emph{Adv. Neural Inf. Proc. Syst.}, 2016.

\bibitem[Rakhlin and Sridharan(2017)]{Rakhlin--Sridharan2017}
Alexander Rakhlin and Karthik Sridharan.
\newblock On equivalence of martingale tail bounds and deterministic regret inequalities.
\newblock In Satyen Kale and Ohad Shamir, editors, \emph{Conf. Learn. Theory}, volume~65 of \emph{Proceedings of Machine Learning Research}, pages 1704--1722. PMLR, 2017.

\bibitem[Ramdas et~al.(2020)Ramdas, Ruf, Larsson, and Koolen]{Ramdas--Ruf--Larsson--Koolen2020}
Aaditya Ramdas, Johannes Ruf, Martin Larsson, and Wouter Koolen.
\newblock Admissible anytime-valid sequential inference must rely on nonnegative martingales.
\newblock \emph{arXiv preprint arXiv:2009.03167}, September 2020.

\bibitem[Robbins(1970)]{Robbins1970}
Herbert Robbins.
\newblock Statistical methods related to the law of the iterated logarithm.
\newblock \emph{Ann. Math. Stat.}, 41\penalty0 (5):\penalty0 1397--1409, 1970.

\bibitem[Shafer(2021)]{Shafer2021}
Glenn Shafer.
\newblock Testing by betting: A strategy for statistical and scientific communication.
\newblock \emph{J. R. Stat. Soc. Ser. A Stat. Soc.}, 184\penalty0 (2):\penalty0 407--431, April 2021.

\bibitem[Shafer and Vovk(2019)]{Shafer--Vovk2019}
Glenn Shafer and Vladimir Vovk.
\newblock \emph{Game-theoretic foundations for probability and finance}.
\newblock Wiley Series in Probability and Statistics. John Wiley \& Sons, Nashville, TN, March 2019.

\bibitem[Shafer et~al.(2011)Shafer, Shen, Vereshchagin, and Vovk]{Shafer--Shen--Vereshchagin--Vovk2011}
Glenn Shafer, Alexander Shen, Nikolai Vereshchagin, and Vladimir Vovk.
\newblock Test martingales, {B}ayes factors and p-values.
\newblock \emph{Stat. Sci.}, 26\penalty0 (1):\penalty0 84--101, February 2011.

\bibitem[Solomon(2015)]{Solomon2015}
Justin Solomon.
\newblock \emph{Numerical algorithms: methods for computer vision, machine learning, and graphics}.
\newblock CRC press, 2015.

\bibitem[Van~Erven et~al.(2020)Van~Erven, Van~der Hoeven, Kot{\l}owski, and Koolen]{VanErven--VanderHoeven--Kotlowski--Koolen2020}
Tim Van~Erven, Dirk Van~der Hoeven, Wojciech Kot{\l}owski, and Wouter~M Koolen.
\newblock Open problem: {F}ast and optimal online portfolio selection.
\newblock In \emph{Conf. Learn. Theory}, pages 3864--3869. PMLR, 2020.

\bibitem[Ville(1939)]{Ville1939}
Jean Ville.
\newblock Etude critique de la notion de collectif.
\newblock \emph{Bull. Amer. Math. Soc}, 45\penalty0 (11):\penalty0 824, 1939.

\bibitem[Virtanen et~al.(2020)Virtanen, Gommers, Oliphant, Haberland, Reddy, Cournapeau, Burovski, Peterson, Weckesser, Bright, {van der Walt}, Brett, Wilson, Millman, Mayorov, Nelson, Jones, Kern, Larson, Carey, Polat, Feng, Moore, {VanderPlas}, Laxalde, Perktold, Cimrman, Henriksen, Quintero, Harris, Archibald, Ribeiro, Pedregosa, {van Mulbregt}, and {SciPy 1.0 Contributors}]{2020SciPy-NMeth}
Pauli Virtanen, Ralf Gommers, Travis~E. Oliphant, Matt Haberland, Tyler Reddy, David Cournapeau, Evgeni Burovski, Pearu Peterson, Warren Weckesser, Jonathan Bright, St{\'e}fan~J. {van der Walt}, Matthew Brett, Joshua Wilson, K.~Jarrod Millman, Nikolay Mayorov, Andrew R.~J. Nelson, Eric Jones, Robert Kern, Eric Larson, C~J Carey, {\.I}lhan Polat, Yu~Feng, Eric~W. Moore, Jake {VanderPlas}, Denis Laxalde, Josef Perktold, Robert Cimrman, Ian Henriksen, E.~A. Quintero, Charles~R. Harris, Anne~M. Archibald, Ant{\^o}nio~H. Ribeiro, Fabian Pedregosa, Paul {van Mulbregt}, and {SciPy 1.0 Contributors}.
\newblock {{SciPy} 1.0: Fundamental Algorithms for Scientific Computing in Python}.
\newblock \emph{Nature Methods}, 17:\penalty0 261--272, 2020.
\newblock \doi{10.1038/s41592-019-0686-2}.

\bibitem[Waudby-Smith and Ramdas(2020{\natexlab{a}})]{Waudby-Smith--Ramdas2020a}
Ian Waudby-Smith and Aaditya Ramdas.
\newblock Confidence sequences for sampling without replacement.
\newblock In H.~Larochelle, M.~Ranzato, R.~Hadsell, M.F. Balcan, and H.~Lin, editors, \emph{Adv. Neural Inf. Proc. Syst.}, volume~33, pages 20204--20214. Curran Associates, Inc., 2020{\natexlab{a}}.
\newblock URL \url{https://proceedings.neurips.cc/paper/2020/file/e96c7de8f6390b1e6c71556e4e0a4959-Paper.pdf}.

\bibitem[Waudby-Smith and Ramdas(2020{\natexlab{b}})]{Waudby-Smith--Ramdas2020b}
Ian Waudby-Smith and Aaditya Ramdas.
\newblock Estimating means of bounded random variables by betting.
\newblock \emph{arXiv preprint arXiv:2010.09686}, 2020{\natexlab{b}}.

\bibitem[Zimmert et~al.(2022)Zimmert, Agarwal, and Kale]{Zimmert--Agarwal--Kale2022}
Julian Zimmert, Naman Agarwal, and Satyen Kale.
\newblock Pushing the efficiency-regret {P}areto frontier for online learning of portfolios and quantum states.
\newblock \emph{arXiv preprint arXiv:2202.02765}, 2022.

\end{thebibliography}

%



\newpage
\begin{IEEEbiographynophoto}{J. Jon Ryu} is currently a postdoctoral associate at the Department of Electrical Engineering and Computer Science at Massachusetts Institute of Technology.
He received the B.S. (Hons.) degrees in electrical and computer engineering and mathematical science (double major) from Seoul National University, Seoul, South Korea, in 2015, and the
Ph.D. degree in electrical engineering from the University of California San
Diego in 2022. 
He was a recipient of Kwanjeong Scholarship for graduate study from 2015 to 2020.
His research interests focus on developing scalable and reliable machine learning algorithms based on the first principles of information theory and statistics.
\end{IEEEbiographynophoto}

\begin{IEEEbiographynophoto}{Alankrita Bhatt} received the bachelor’s degree in electrical engineering from the Indian Institute of Technology, Kanpur, India, in 2016 and the Ph.D. degree in electrical and computer engineering from the University of California, San Diego, in 2022. She was previously a postdoc at Simons Institute for the Theory of Computing, UC Berkeley and is currently a postdoc in the CMS department at Caltech. 
\end{IEEEbiographynophoto}
\vfill



\end{document}